\DeclarePairedDelimiter{\abs}{\lvert}{\rvert}
\DeclarePairedDelimiter{\norma}{\lVert}{\rVert}
\DeclareMathOperator{\supp}{supp}
\DeclareMathOperator{\Div}{div}
\newcommand{\numberset}{\mathbb}
\newcommand{\N}{\numberset{N}}
\newcommand{\Z}{\numberset{Z}}
\newcommand{\R}{\numberset{R}}
\newcommand{\D}{\mathscr{D}}
\newcommand{\E}{\mathcal{E}}
\newcommand{\diff}{\mathrm{d}}
\newcommand{\p}{\mathcal{P}}
\newcommand{\eps}{\epsilon}
\newtheorem{theorem}{Theorem}
\newtheorem{lemma}{Lemma}
\newtheorem{proposition}{Proposition}
\newtheorem{definition}{Definition}
\theoremstyle{plain}
\newtheorem{corollary}{Corollary}
\newtheorem{remark}{Remark}
\numberwithin{equation}{section}
\newcommand{\mybinom}[2]{\biggl(\genfrac{}{}{0pt}{}{#1}{#2}\biggr)}
\begin{document}

\title{On a Babu\v{s}ka paradox for polyharmonic operators: spectral stability and boundary homogenization for intermediate problems}

\author{Francesco Ferraresso\thanks{Institute of Mathematics, Universit\"at Bern, Sidlerstrasse 5, 3012 Bern, Switzerland, \texttt{francesco.ferraresso@math.unibe.ch}} \: and \: Pier Domenico Lamberti\thanks{Dipartimento di Matematica ``Tullio Levi-Civita'', Universit\`a degli Studi di Padova, Via Trieste 63, 35121 Padova, Italy, \texttt{lamberti@math.unipd.it}}}

\maketitle

\begin{abstract}
We analyse the spectral convergence  of high order elliptic  differential operators  subject to singular domain perturbations and homogeneous boundary conditions of intermediate type.
We identify sharp assumptions on the domain perturbations improving, in the case of polyharmonic operators of higher order, conditions known to be sharp in the case of fourth order operators. The optimality is proved by analysing in detail a boundary homogenization problem, which provides a smooth version of a polyharmonic Babu\v{s}ka paradox.
\end{abstract}



\section{Introduction}
A recurrent topic  in the Analysis of Partial Differential Equations, in Spectral Theory, and their applications is the study of the variation of the solutions to elliptic boundary value problems on domains subject to boundary perturbation, with contributions rooting back in the works of Courant and Hilbert \cite{C-H}, and Keldysh \cite{Kel}. 
The mathematical  interest in this type of  problems is also given by the possible appearance of an unexpected asymptotic behaviour of the solutions, which can be understood as a spectral instability phenomenon. 
Probably the most famous example in elasticity theory  is the celebrated Babu\v{s}ka  paradox which concerns the approximation of a thin hinged circular plate by means of an invading sequence of convex polygons. 
This problem was considered by Babu\v{s}ka in \cite{Bab}  and was further discussed  by Maz'ya and Nazarov 
in \cite{MazNaz} where among various results they present a variant of the Babu\v{s}ka paradox consisting in the approximation a thin hinged circular plate by means of an invading sequence of non-convex, indented polygons  (see \cite[\S~1.4]{GazzGS} for a recent discussion on this subject and for more details concerning the related results of Sapond\v{z}hyan~\cite{Sap}).  We find convenient to briefly recall the formulation of the paradox.   

Given a circle $\Omega$ in $\R^2$ and a datum  $f\in  L^2(\Omega )$, consider the following boundary value problem 
\begin{equation}
\label{intro: convex}
\begin{cases}
\Delta^2 u = f, \quad &\textup{in $\Omega$},\\
u=0, \quad &\textup{on $\partial \Omega$}, \\
\frac{\partial^2 u}{\partial n^2}= 0, \quad &\textup{on $\partial \Omega$,}
\end{cases}
\end{equation}
in the unknown real-valued function $u$. Note that here and in the sequel, boundary value problems will be understood in the weak sense. Thus, problem 
\eqref{intro: convex} consists in finding $u\in W^{2,2}(\Omega)\cap W^{1,2}_0(\Omega) $ such that 
\begin{equation*}
\int_{\Omega}D^2u:D^2\varphi \, dx = \int_{\Omega }f\varphi\, dx,\ \ {\rm for\ all}\ \varphi\in  W^{2,2}(\Omega)\cap W^{1,2}_0(\Omega),
\end{equation*}
where $  D^2u:D^2\varphi =\sum_{i,j=1}^Nu_{x_ix_j}\varphi_{x_ix_j}$ is the Frobenius product of the two Hessian matrices of $u$ and $\varphi$.
In the theory of elastic plates, $u$ represents the deflection of a {\it hinged} thin plate with midplane $\Omega$  and  normal load $f$.  

Define  inside $\Omega$ an invading  sequence of indented polygons $\Omega_n$ obtained by modifying an inscribed  convex polygon with $n$ vertexes $p_j^n$, $j=1, \dots ,n$,  and replacing its contour line in a neighbourhood of each  $p_j^n$ by  a  $V$-shaped line as in Figure[1]. The small curvilinear triangles appearing have height equal to $h_j^n$ and base of length $\eta_j^n$, while   the length of the nearby chord   (the side of the  polygon) is denoted by  $\zeta_j^n$.  
Consider now the same boundary value problem in $\Omega_n$
\begin{equation}
\label{intro: polygon}
\begin{cases}
\Delta^2 u_n = f, \quad &\textup{in $\Omega_n$},\\
u_n=0, \quad &\textup{on $\partial \Omega_n $}, \\
\frac{\partial^2 u_n}{\partial n^2}= 0, \quad &\textup{on $\partial \Omega_n$,} \\
\end{cases}
\end{equation}
in the unknown  $u_n\in W^{2,2}(\Omega_n)\cap W^{1,2}_0(\Omega_n)$.
The paradox lies in the fact that if
\[
\max_{1 \leq j \leq n} \frac{|\zeta_j^n|}{|\eta_j^n|} = O(1), \quad \max_{1 \leq j \leq n} \frac{|\eta_j^n|}{|h_j^n|^{2/3}} = o(1),
\]
as $n \to \infty$, then  the solution $u_n \in W^{2,2}(\Omega_n) \cap W^{1,2}_0(\Omega_n)$ of \eqref{intro: polygon} does not converge to the solution $u$ of \eqref{intro: convex}, but to  the solution $v$ of the boundary value problem 
\begin{equation}
\label{intro: convex2}
\begin{cases}
\Delta^2 v = f, \quad &\textup{in $\Omega$},\\
v=0, \quad &\textup{on $\partial \Omega$}, \\
\frac{\partial v}{\partial n} = 0, \quad &\textup{on $\partial \Omega$}. \\
\end{cases}
\end{equation}
Here $v$ represents the deflection of a {\it clamped } thin plate.
 Note that  it is possible to choose $|\zeta_j^n|=0$ for all $j$ and $n$ in order to obtain the wild looking set $\Omega_n$ in Figure 2.

\begin{figure}
    \centering
    \begin{minipage}[b]{0.49\textwidth}
        \centering
         \begin{tikzpicture}[scale=0.9]
			\draw[thick] (0,0) circle (3cm);
			\draw[very thick] ({3*cos(27.5)}, {-3*sin(27.5)}) -- ({3*cos(2.5)}, {-3*sin(2.5)}) -- (2.5, 0) -- ({3*cos(2.5)},{3*sin(2.5)}) 				-- ({3*cos(27.5)},{3*sin(27.5)}) -- ({2.5*cos(30)},{2.5*sin(30)}) -- ({3*cos(32.5)},{3*sin(32.5)}) -- ({3*cos(57.5)},						{3*sin(57.5)}) -- ({2.5*cos(60)},{2.5*sin(60)}) -- ({3*cos(62.5)},{3*sin(62.5)});
			\draw[very thick, blue] ({3*cos(62.5)},{3*sin(62.5)}) -- ({3*cos(87.5)},{3*sin(87.5)});
			\draw[very thick] ({3*cos(87.5)},{3*sin(87.5)}) -- (0,2.5) -- ({3*cos(92.5)},{3*sin(92.5)}) -- ({3*cos(117.5)},{3*sin(117.5)}) 			-- ({2.5*cos(120)},{2.5*sin(120)})--({3*cos(122.5)},{3*sin(122.5)}) -- ({3*cos(147.5)},{3*sin(147.5)}) -- ({2.5*cos(150)},					{2.5*sin(150)}) -- ({3*cos(152.5)},{3*sin(152.5)}) -- ({3*cos(177.5)},{3*sin(177.5)}) -- (-2.5,0) -- ({-3*cos(2.5)},						{-3*sin(2.5)}) -- ({-3*cos(27.5)},{-3*sin(27.5)}) -- ({-2.5*cos(30)},{-2.5*sin(30)}) -- ({-3*cos(32.5)},{-3*sin(32.5)}) -- 					({-3*cos(57.5)},{-3*sin(57.5)}) -- ({-2.5*cos(60)},{-2.5*sin(60)}) -- ({-3*cos(62.5)},{-3*sin(62.5)}) -- ({-3*cos(87.5)},					{-3*sin(87.5)}) -- (0,-2.5) -- ({-3*cos(92.5)},{-3*sin(92.5)}) -- ({-3*cos(117.5)},{-3*sin(117.5)}) -- ({-2.5*cos(120)},					{-2.5*sin(120)})--({-3*cos(122.5)},{-3*sin(122.5)}) -- ({-3*cos(147.5)},{-3*sin(147.5)}) -- ({-2.5*cos(150)},{-2.5*sin(150)}) 				-- ({3*cos(27.5)}, {-3*sin(27.5)}) ;
			\draw[thick, red] ({3*cos(87.5)},{3*sin(87.5)}) arc(87.5: 92.5 : 3);
			\draw ({2.9*cos(75)}, {2.9*sin(75)}) -- ({3.5*cos(75)},{3.5*sin(75)}) ;
			\draw ({3.7*cos(73)},{3.7*sin(73)}) node{{\tiny \color{blue} $\zeta^{n}_j$}};
			\draw ({0},{3}) -- ({0},{3.5});
			\draw (0, 3.7) node{{\tiny \color{red} $\eta^{n}_j$}};
			\draw[thick, orange] (0, 2.51) -- (0,2.98);
			\draw (0,2.75) -- (-0.6, 2);
			\draw (-0.7, 1.8) node{{\tiny \color{orange} $h_j^{n}$}};
			\end{tikzpicture}
        	\caption{Indented polygon}
    	\end{minipage}\hfill
    \begin{minipage}[b]{0.49\textwidth}
        \centering
        \includegraphics[width=0.8\textwidth]{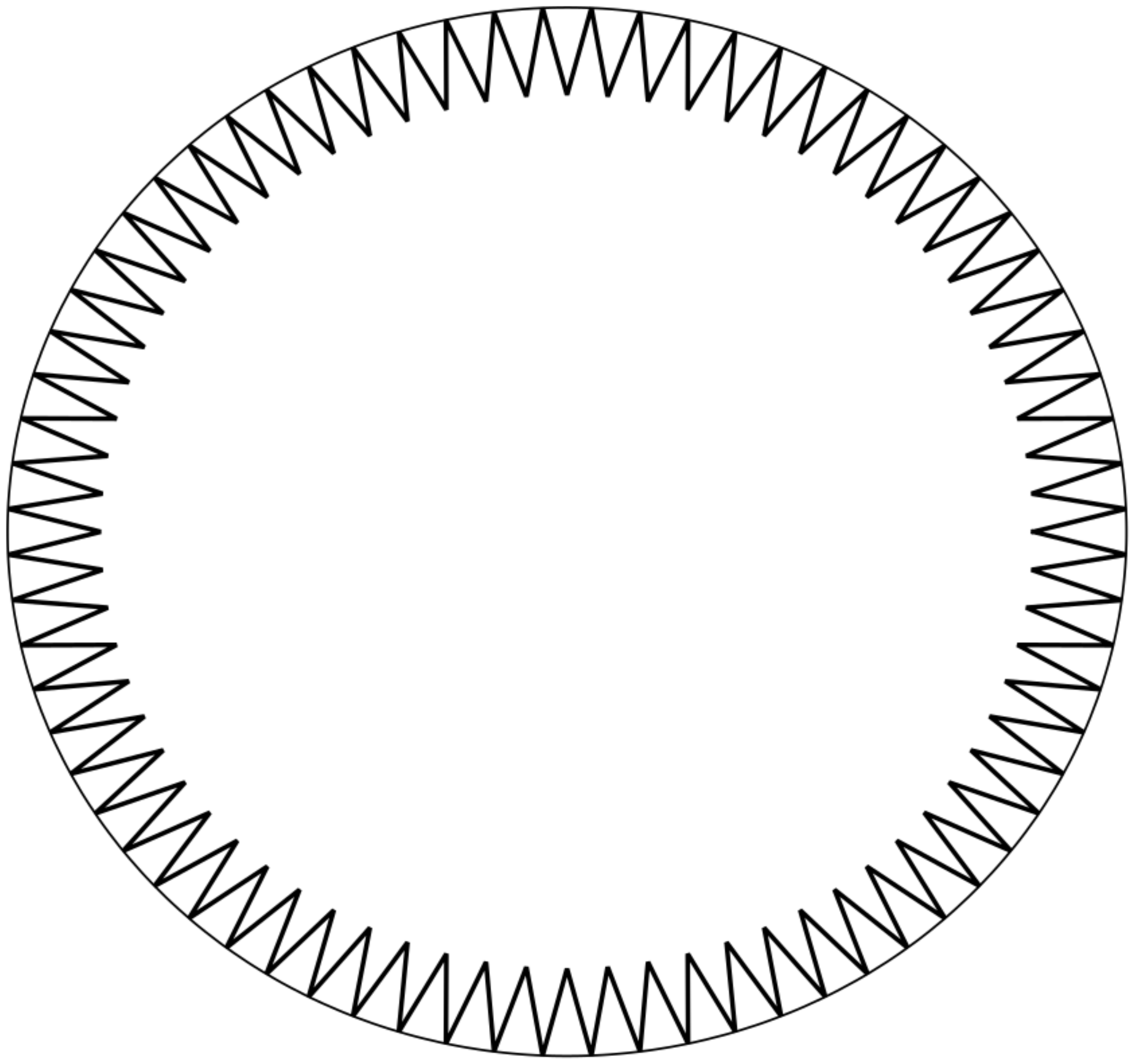} 
       \caption{Degenerate indented polygon}
    \end{minipage}
\end{figure}

In \cite{arlacras, ArrLamb} the authors considered a smooth version of this paradox. Given a sufficiently regular bounded domain $W$ in $\R^{N-1}$, $N\geq 2$,  they define a family of domains $(\Omega_\eps)_{0<\eps<\eps_0}$ by setting
\begin{equation}
\label{intro: geometry}
\Omega = W \times (-1,0), \,\,\, \Omega_\eps = \{ (\bar{x}, x_N) \in \R^N : \bar{x} \in W, -1< x_N < \eps^\alpha b(\bar{x}/\eps)\}
\end{equation}
where $\bar{x}= (x_1, \dots, x_{N-1})$, and $b$ is a non-constant, smooth, positive, periodic function of period $Y = [-1/2, 1/2]^{N-1}$. The geometry of this perturbation is described in figure [3] below.

By comparing Figure \ref{fig: homogenization}(a) and Figure 2, one realizes that  the perturbations look similar  locally at the boundary. This analogy goes further if we define $h_j^{n} = \eps^{\alpha}$ and $\eta_j^{n} = \eps$, with $\epsilon =1/n$. Indeed, in \cite{ArrLamb} it was proved that if 
\[
\frac{|\eta_j^{n}|}{|h_j^{n}|^{2/3}} = \frac{\eps}{\eps^{2/3 \alpha}} = o(1),
\]
as $\eps \to 0$, that is if $\alpha < 3/2$, then the same  Babu\v{s}ka-type paradox  appears. Moreover,  it was also proved that if $\alpha > 3/2$ then no Babu\v{s}ka paradox appears and there is spectral stability. The threshold $\alpha = 3/2$ is then critical and represents a typical case of study for homogenization theory: in fact, it was proved in \cite{ArrLamb} that the limiting problem contains a `strange term' which could be interpreted as a `strange curvature'.

\begin{figure}
\centering
\subfloat[$\alpha = 1$]{\includegraphics[width=.4\textwidth]{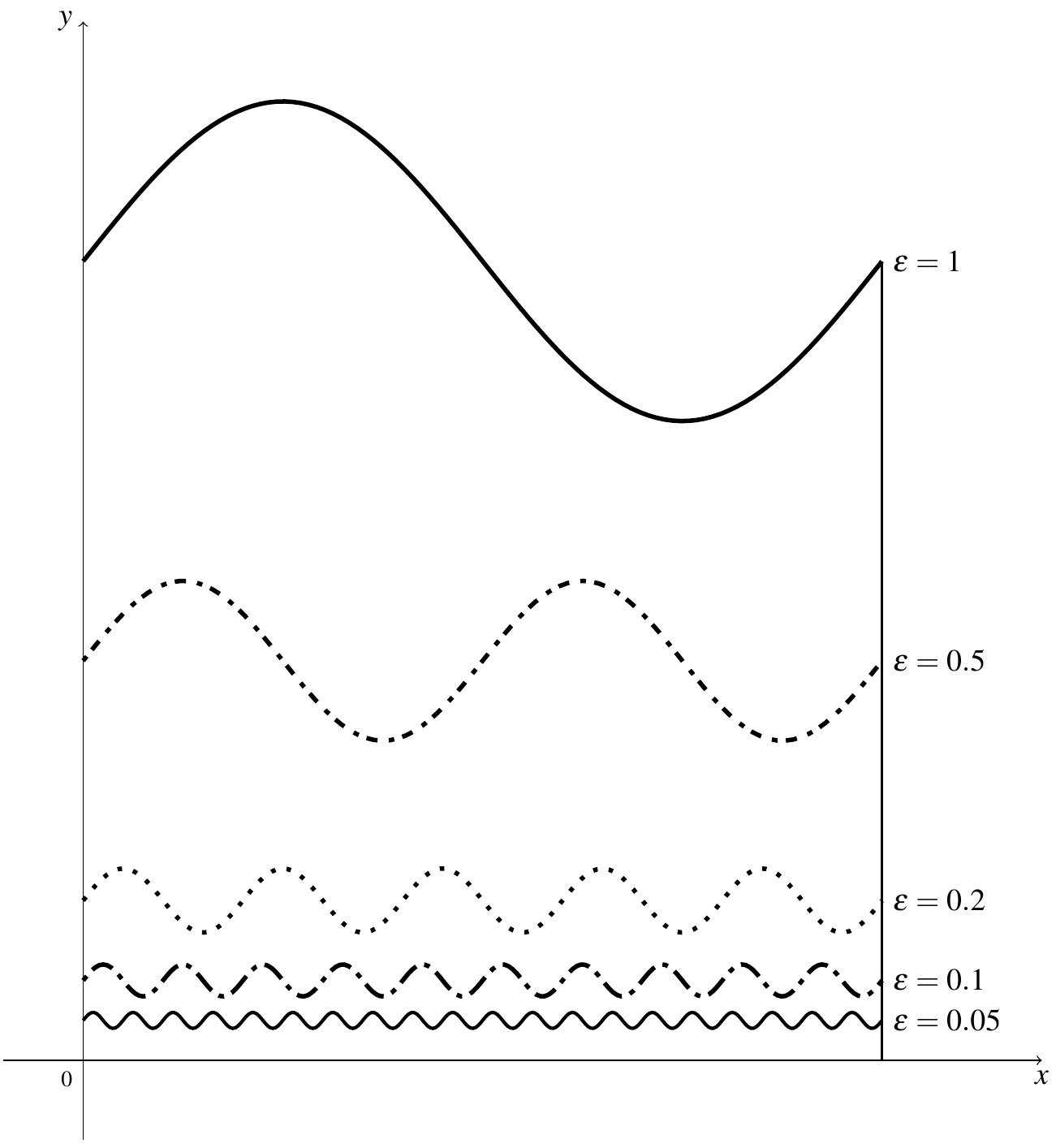}}
\subfloat[$\alpha = 2$]{\includegraphics[width=.4\textwidth]{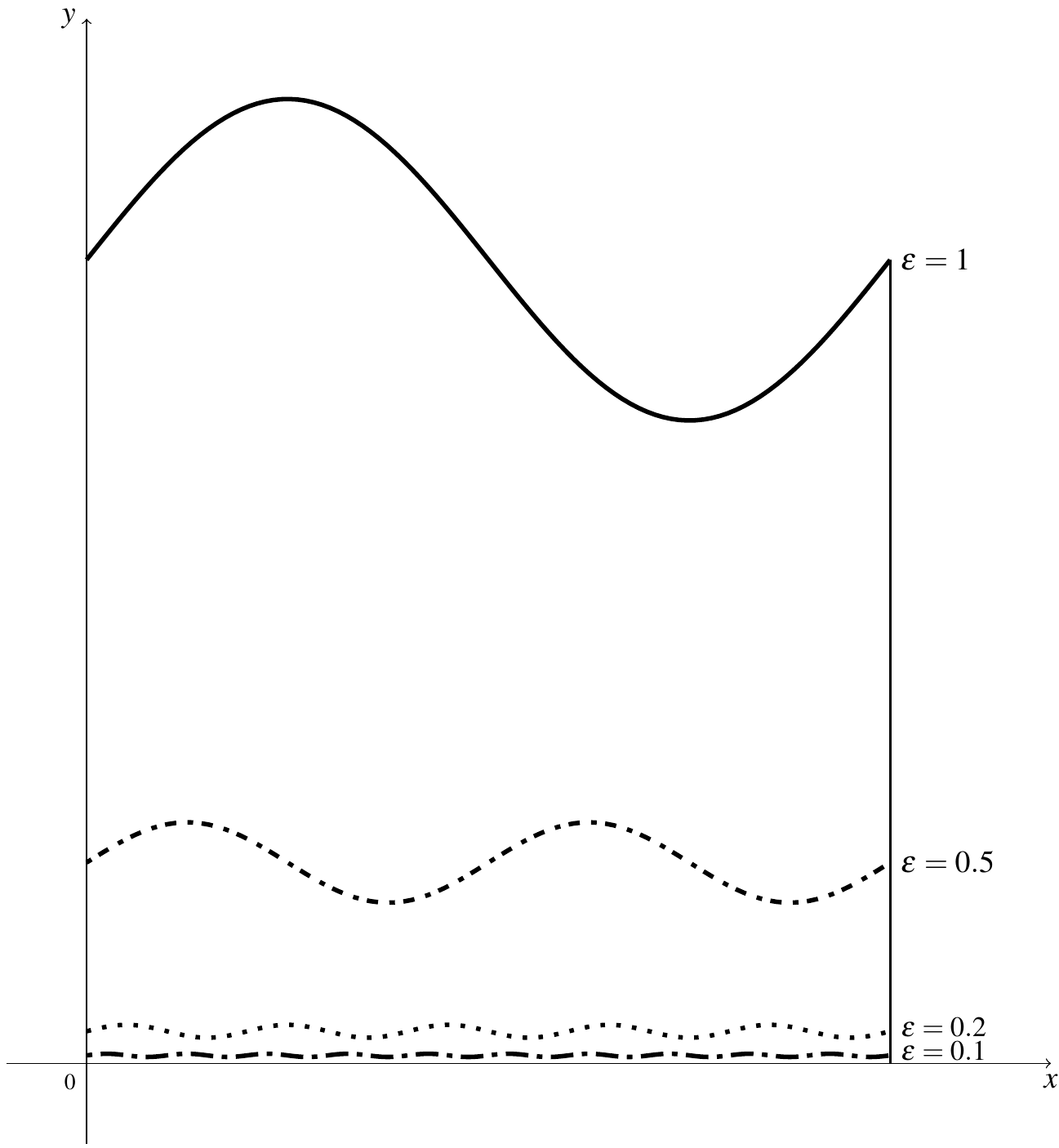}}
\caption{\small Oscillations of the upper boundary of $\Omega_\eps$ as $\eps \to 0$, depending on $\alpha$.}
\label{fig: homogenization}
\end{figure}

It is then natural to wonder whether Babu\v{s}ka-type paradoxes  may be detected in the case of polyharmonic operators $(-\Delta)^m$, $m>2$ subject to intermediate boundary conditions. The answer is not as straightforward as it may appear, and it is necessary to clarify first what are the possible boundary conditions for  those operators. Indeed, there exists a whole family of boundary value problems depending on a parameter $k=0,1\dots , m$, the weak formulation of which reads as follows: given a bounded domain (i.e., a connected open set) $\Omega$ in $\R^N$ with sufficiently smooth boundary, $m\in \N$, and $f\in L^2(\Omega)$, find $u\in W^{m,2}(\Omega)\cap W^{k,2}_0(\Omega)$ such that
\begin{equation}
\label{weakmain}
\int_{\Omega}D^mu:D^m\varphi dx+\int_{\Omega}u\varphi dx= \int_{\Omega }f\varphi dx,\ \ \ \forall \varphi\in  W^{m,2}(\Omega)\cap W^{k,2}_0(\Omega).
\end{equation}
Here we denote by  $W^{m,2}(\Omega)$ the standard Sobolev space of functions in $L^2(\Omega)$ with weak derivatives up to order $m$ in $L^2(\Omega)$ and by  $W^{k,2}_0(\Omega)$  the closure in $W^{k,2}(\Omega)$ of the $C^{\infty}$-functions with compact support in $\Omega$.
Note that for $k=m$ one obtains the Dirichlet problem 
\begin{equation}
\label{classicalmaindir}
\begin{cases}
(-\Delta)^m u + u = f, &\textup{in $\Omega$,}\\
\frac{\partial^l u}{\partial n^l} = 0, &\textup{on $\partial \Omega$,\quad\quad for all $0 \leq l \leq m-1$,}\\
\end{cases}
\end{equation}
while for $k=m-1$ one gets the significantly different problem
\begin{equation}
\label{classicalmain}
\begin{cases}
(-\Delta)^m u + u = f, &\textup{in $\Omega$,}\\
\frac{\partial^l u}{\partial n^l} = 0, &\textup{on $\partial \Omega$,\quad\quad for all $0 \leq l \leq m-2$.}\\
\frac{\partial^m u}{\partial n^m} = 0, &\textup{on $\partial \Omega$.}
\end{cases}
\end{equation}
Finally, for $k=0$ one gets the problem with natural boundary conditions, also known as Neumann problem, and this explains why problem  \eqref{classicalmain} is called intermediate.  Actually, in this paper we refer to problem  \eqref{classicalmain} as to the {\it strong intermediate problem} to emphasise the fact that   \eqref{classicalmain}  is the intermediate problem with the largest $k$  and to distinguish it from the other cases where $0<k<m-1$ which are called here {\it weak intermediate problems}.   According to these considerations, one is led to ask the following:\\

\noindent {\bf Question}: {\it Are there Babu\v{s}ka-type paradoxes for polyharmonic operators $(-\Delta)^m$, $m>2$ satisfying  intermediate boundary conditions, and which are the natural assumptions which prevent the appearance of this paradox? } \\

We are able to  answer to this question in the geometric setting  given by  \eqref{intro: geometry}. Since when $m=2$ problem \eqref{classicalmain} coincides with the hinged plate  \eqref{intro: convex}, the Babu\v{s}ka paradox  will be discussed for polyharmonic operators with strong intermediate boundary conditions (in short, $(SIBC)$), being the natural higher order version  of the  intermediate boundary conditions for the biharmonic operator. 

 Let us describe one of the two main results of this paper. 
Let $\Omega_\eps$ and $\Omega$ be as in \eqref{intro: geometry}, $V(\Omega_\eps) = W^{m,2}(\Omega_\eps) \cap W^{m-1,2}_0(\Omega_\eps)$. For every $\eps > 0$, let $u_\eps \in V(\Omega_\eps)$ be the solution of 
\begin{equation}\label{intro: mainweak}
\int_{\Omega_\eps} D^m\! u_{\eps} : D^m\! \varphi + u_{\eps}\varphi \, dx = \int_{\Omega_\eps} f \varphi\, dx,\ \ \ {\rm for \ all }\ \ \varphi \in V(\Omega_\eps).
\end{equation}
Recall that this is the weak formulation of the Poisson problem for $(-\Delta)^m + \mathbb{I}$ with $(SIBC)$. For $u \in W^{m,2}(\Omega)$, define $T_\eps u = u \circ \Phi_\eps$ where $\Phi_\eps$ is a smooth diffeomorphism mapping $\Omega_\eps$ into $\Omega$ that coincides with the identity on a large part $K_\eps$ of $\Omega$, with $|\Omega \setminus K_\eps| \to 0$ as $\eps \to 0$, see \eqref{def: T eps high}. Let $u$ be such that $\norma{u_\eps - T_\eps u}_{L^2(\Omega_\eps)} \to 0$ as $\eps \to 0$. 

 Theorem \ref{thm: poly strong} states that the limit $u$ solves different differential problems according to the values of the parameter $\alpha$. More precisely,  we have the following trichotomy:
\begin{enumerate}[label=(\roman*)]
\item {\it (Stability)}  If $\alpha > 3/2$, then $u$ solves \eqref{intro: mainweak} in $\Omega$, that is, $u$ satisfies $(-\Delta)^m u + u = f$ in $\Omega$ and $(SIBC)$ on $\partial \Omega$;
\item {\it  (Degeneration)} If $\alpha < 3/2$, then $u$ satisfies $(-\Delta)^m u + u = f$ in $\Omega$, with Dirichlet boundary conditions on $W \times \{0\}$, that is 
$$\frac{\partial^l u}{\partial n^l} = 0,\ \ \textup{for all}\ 0 \leq l \leq m-1,$$
 and $(SIBC)$ on the rest of the boundary of $\Omega$;
\item {\it (Strange term)} If $\alpha = 3/2$, then $u$ satisfies $(-\Delta)^m u + u = f$ in $\Omega$ with  the following boundary conditions on $W \times \{0\}$
$$
\left\{\begin{array}{ll}
D^l u=0,& \textup{ for\  all}\   0\le l \leq m-2,\vspace{1mm}\\
\frac{\partial^m u}{\partial n^m} + K \frac{\partial^{m-1} u}{\partial n^{m-1}} = 0,&
\end{array}
\right.$$
and $(SIBC)$ on the rest of the boundary of $\Omega$. Here  $K$  is a certain positive constant that can be characterized as the energy of a suitable $m$-harmonic function in $Y \times (-\infty,0)$.
\end{enumerate}

It follows that if $\alpha < 3/2$ a polyharmonic Babu\v{s}ka paradox appears. It is interesting to observe that  the critical value $3/2$ is the same for all the polyharmonic operators with $(SIBC)$. 

The techniques used to prove Theorem \ref{thm: poly strong} vary drastically depending on the case $(i)-(iii)$ considered. Theorem \ref{thm: poly strong}(i) is a consequence of Theorem \ref{lemma: 6.2 improved}, which is the second main result of the paper and provides a general stability criterion for self-adjoint elliptic differential operators of order $2m$ with non-constant coefficients and compact resolvents (or, more precisely, for their realization in the space $W^{m,2}(\Omega) \cap W^{k,2}_0(\Omega)$, $0<k<m$) on varying domains featuring a fast oscillating boundary.

Theorem \ref{lemma: 6.2 improved} is an improvement of a previous result (see \cite[Lemma 6.2]{ArrLamb}) and can be summarized and simplified  in the following way. 
Let $\Omega $ and $\Omega_{\epsilon}$ be bounded domains in $\R^N$ defined as follows:
$$\begin{array}{l}
\Omega  = \{(\bar{x}, x_N) \in W \times (a,b) : \bar{x} \in W, a < g(\bar{x}) < b\},\vspace{1mm}\\
\Omega_\eps  = \{(\bar{x}, x_N) \in W \times (a,b) : \bar{x} \in W, a < g_{\eps}(\bar{x}) < b\},
\end{array}
$$
where $W\subset \R^{n-1}$ is as above, $a + \rho < g,g_{\eps} < b - \rho$, $a,b\in \R$, and $g,g_\eps \in C^m(\overline{W})$. If $\norma{g-g_\eps}_{\infty} $  converges to zero as $\eps  $ goes to zero  and, for all $|\beta |=m$, $\norma{D^\beta(g - g_\eps)}_{\infty}$  converges to zero or diverges to infinity with  a suitable rate  expressed in terms of a power of $\norma{g-g_\eps}_{\infty}$, then the spectrum of the realization of a self-adjoint elliptic differential operator in $ W^{m,2}(\Omega_\eps) \cap W^{k,2}_0(\Omega_\eps)$, $1 \leq k \leq m-1$ is stable as $\eps \to 0$ . We note that \cite[Lemma 6.2]{ArrLamb} is sharp in the case $m=2$ and $k=1$. In Theorem \ref{lemma: 6.2 improved} we allow a  rate of convergence or divergence for $\norma{D^\beta(g - g_\eps)}_{\infty}$ which is much better  when $k>1$. For example, going back to Theorem \ref{thm: poly strong}(i), we note the following fact: upon considering profile functions $g_{\eps}$ of the type $g_{\eps}(\bar x)=\epsilon^{\alpha}b(\frac{\bar x}{\epsilon})$, where $b$ is a non-constant periodic function, we could apply  \cite[Lemma 6.2]{ArrLamb} to the polyharmonic problem in a straightofrward way; however, this would only guarantee the spectral stability  for $\alpha > m-1/2$. Our improved stability Theorem\ref{lemma: 6.2 improved} guarantees the spectral stability  for the better range  $\alpha >m-k+1/2$.

The proof of Theorem \ref{thm: poly strong}(ii) is based on a consequence of a degeneration argument that was introduced in \cite{CasDiazLuLaySuGrau}, and which was already exploited in \cite{ArrLamb}.

The reader may wonder if it is possible to push the arguments contained in the proof of Theorem \ref{thm: poly strong} in order to discuss the general case of weak intermediate problems for  polyharmonic operators. The main issue is that the degeneration argument in Theorem~\ref{thm: poly strong}$(ii)$ is restricted to the case of $(SIBC)$. Hence, a detailed analysis of the various possible situations seems to us  much more involved and almost prohibitive for arbitrary values of $m$  and $k$. We mention that the case $m=3$, $k=1$ will be the object of a forthcoming paper and we refer to \cite{ferraresso} for a number of results in this direction.

We remark that our main results, in particular Theorem \ref{lemma: 6.2 improved} and Theorem~\ref{thm: poly strong}, are  based on the notion of $\E$-convergence in the sense of Vainikko \cite{VainikkoSurvey} which is related to Stummel's discrete convergence  and to Anselone and Palmer's collective compactness, see   \cite{Stummel} and  \cite{AnsPalm} respectively. For a recent  survey on these topics and  further generalisations, we refer to  \cite{Boegli2}.

Finally, we mention that, in the case of second-order operators, counterexamples to the spectral stability with respect to domain perturbation are well-known, see for example the classical \cite[Chp. VI, 2.6]{C-H}. Related problems for the Neumann Laplace operator and for the Schr\"odinger operator with Neumann boundary conditions have been considered in \cite{AHH, CarKhrabu} and \cite{Arr1} respectively. Regarding higher order elliptic operators on variable domains, several contributions can be found in \cite{ArrFerLamb, BuosoLamb, BuosoLamb2, BuosoProv,BurLambJDE,BurLambSSIM, bulacompl, FeLa}. In particular, for a possible approach to these topics  via asymptotic analysis,  we refer to the articles \cite{BuCaNa, CosDRivDauMus, GomLobPer} and to the monographs \cite{MazNazPlaI, MazNazPlaII}. We refer also to the monograph \cite{GazzGS} and the articles \cite{FGazz, Prov} where polyharmonic operators are considered. For a wider discussion about perturbation theory for linear operators we mention the  monographs  \cite{Henry, Kato, necas}. 

This paper is organised as follows. Section 2 is devoted to preliminaries and notation, in particular to the definition of the class of operators and open sets under consideration. Section 3 contains a general discussion concerning the spectral stability of elliptic operators, and the proof of Theorem \ref{lemma: 6.2 improved}  and its corollaries, see in particular Theorem~\ref{thm: spectral stability polyharmonics}. In Section 4 we prove a Polyharmonic Green Formula which is used in the sequel and has its own interest. Section  5 is devoted to the analysis  of strong intermediate boundary conditions and to the proof of Theorem~\ref{thm: poly strong}. In the Appendix  we prove a technical lemma used in the proof of Theorem~\ref{thm: poly strong}(iii).


\section{Preliminaries and notation}
\label{sec: high variational}

In the sequel, we will use the following basic notation:
\begin{itemize}
\item $\N$ denotes the set of positive integers. Moreover, $\N_0 := \N \cup \{0\}$;
\item Given a normed space $X$, ${\mathcal{L}}(X)$ is the space of bounded linear operators on $X$;
\item If not otherwise specified, $m \in \N$ will always be greater or equal to 2;
\item $\Omega$, $\Omega_\eps$, $\eps_0 \geq \eps > 0$ will always denote bounded domains (i.e., open connected open sets in $\R^N$);
\item The standard Sobolev spaces with summability order 2 and smoothness order $m$ are denoted by $W^{m,2}_0(\Omega)$ and $W^{m,2}(\Omega)$.
\item The notation $V(\Omega), V(\Omega_\eps)$ will often be used for subspaces of $W^{m,2}(\Omega)$ (resp. $W^{m,2}(\Omega_\eps)$), containing $W_0^{m,2}(\Omega)$ (resp. $W^{m,2}_0(\Omega_\eps)$). 
\end{itemize}

\subsection{Classes of operators}
Let $M$ be the number of multiindices $\alpha = (\alpha_1, \dotsc, \alpha_N) \in \N_0^N$ with length $|\alpha| = \alpha_1 + \cdots + \alpha_N = m$. For all $\alpha, \beta \in \N_0^N$ such that $|\alpha| = |\beta| = m$, let $A_{\alpha \beta}$ be bounded measurable real-valued functions defined on $\R^N$ satisfying $A_{\alpha\beta} = A_{\beta \alpha}$ and the condition
\begin{equation}
\label{chp_hi: ell_cond}
\sum_{|\alpha| = |\beta| = m} A_{\alpha \beta}(x) \xi_\alpha \xi_\beta \geq 0,
\end{equation}
for all $x \in \R^N$, $(\xi_\alpha)_{|\alpha|= m } \in \R^M$. For all open subsets $\Omega$ of $\R^N$ we define
\begin{equation}
\label{eq: Q}
Q_\Omega(u,v) = \sum_{|\alpha|= |\beta| = m} \int_{\Omega} A_{\alpha \beta} D^\alpha u D^\beta v \, dx + \int_{\Omega} uv \, dx,
\end{equation}
for all $u,v \in W^{m,2}(\Omega)$ and we set $Q_{\Omega}(u) = Q_{\Omega}(u,u)$. Note that by \eqref{chp_hi: ell_cond} $Q_\Omega$ is a positive quadratic form, densely defined in the Hilbert space $L^2(\Omega)$. Hence, $Q_\Omega(\cdot, \cdot)$ defines a scalar product in $W^{m,2}(\Omega)$.

Let $V(\Omega)$ be a linear subspace of $W^{m,2}(\Omega)$ containing $W^{m,2}_0(\Omega)$. By standard Spectral Theory, if $V(\Omega)$ is complete with respect to the norm $Q^{1/2}_\Omega$, then there exists a uniquely determined non-negative self-adjoint operator $H_{V(\Omega)}$ such that $\D(H^{1/2}_{V(\Omega)}) = V(\Omega)$ and
\begin{equation}
\label{relation Q and H^1/2}
Q_\Omega(u,v) = \big(H^{1/2}_{V(\Omega)}u ,\, H^{1/2}_{V(\Omega)}v \big)_{L^2(\Omega)}, \quad\quad \textup{for all $u,v \in V(\Omega)$.}
\end{equation}
By \cite[Lemma 4.4.1]{Daviesbook} it follows that the domain $\D(H_{V(\Omega)})$ of $H_{V(\Omega)}$ is the subset of $W^{m,2}(\Omega)$ containing all the functions $u \in V(\Omega)$ for which there exists $f \in L^2(\Omega)$ such that
\begin{equation}
\label{chp_hi: weak Q}
Q_\Omega(u,v) = (f, v)_{L^2(\Omega)}, \quad \textup{for all $v \in V(\Omega)$,}
\end{equation}
in which case $H_{V(\Omega)} u = f$. If $u$ is a smooth function satisfying identity \eqref{chp_hi: weak Q} and the coefficients $A_{\alpha \beta}$ are smooth, by integration by parts it is immediate to verify that \eqref{chp_hi: weak Q} is the weak formulation of problem
$
Lu = f$ in $\Omega$,
where $L$ is the operator defined by
\begin{equation*}
Lu = (-1)^m \sum_{|\alpha| = |\beta| = m} D^\alpha(A_{\alpha\beta} D^\beta u) + u,
\end{equation*}
and the unknown $u$ is subject to suitable boundary conditions depending on the choice of $V(\Omega)$. 

If the embedding $V(\Omega) \subset L^2(\Omega)$ is compact, then the operator $H_{V(\Omega)}$ has compact resolvent. Consequently, its spectrum is discrete, and it consists of a sequence of isolated eigenvalues $\lambda_n[V(\Omega)]$ of finite multiplicity diverging to $+\infty$. By \cite[Theorem 4.5.3]{Daviesbook} the eigenvalues $\lambda_n[V(\Omega)]$ are determined by the following Min-Max principle:
\[
\lambda_n[V(\Omega)] = \min_{\substack{E \subset V(\Omega)\\ \text{dim}E = n}} \max_{\substack{u \in E \\ u \neq 0}} \frac{Q_\Omega(u)}{\norma{u}^2_{L^2(\Omega)}},
\]
for all $n \geq 1$. Furthermore, there exists an orthonormal basis in $L^2(\Omega)$ of eigenfunctions $\varphi_n[V(\Omega)]$ associated with the eigenvalues $\lambda_n[V(\Omega)]$.\\
We remark that in our assumptions there exist  two positive constants $c, C \in \R$ independent of $u$ such that
\[
c \norma{u}_{W^{m,2}(\Omega)} \leq Q^{1/2}_\Omega(u) \leq C \norma{u}_{W^{m,2}(\Omega)},
\]
which means that the two norms $Q^{1/2}_\Omega$ and $\norma{\cdot}_{W^{m,2}(\Omega)}$ are equivalent on $V(\Omega)$. Note that in general the constant $c$ may depend on $\Omega$. However, if the coefficients $A_{\alpha \beta}$ satisfy the uniform ellipticity condition
\begin{equation}
\label{def: unif_ellip}
\sum_{|\alpha| = |\beta| = m} A_{\alpha \beta}(x) \xi_\alpha \xi_\beta \geq \theta \sum_{|\alpha|=m} |\xi_{\alpha}|^2,
\end{equation}
for all $x \in \R^N$, $(\xi_\alpha)_{|\alpha|= m } \in \R^M$ and for some $\theta > 0$, then $c$ can be chosen independent of $\Omega$. \\

\subsection{Classes of open sets}

We recall the following definition from \cite[Definition 2.4]{BurLambSSIM} where for any given set $V\in\mathbb{R}^N$ and $\delta>0$, $V_\delta$ is the set $\{x\in \mathbb{R}^N:  \, d(x,\partial\Omega)>\delta\}$, and by a cuboid we mean any rotation of a rectangular parallelepiped in $\mathbb{R}^N$.

\begin{definition}\label{atlas}
Let $\rho>0$, $s,s'\in \mathbb{N}$ with $s'<s$. Let also $\{V_j\}_{j=1}^s$ be a family of bounded open cuboids and $\{r_j\}_{j=1}^s$ be a family of rotations in $\mathbb{R}^N$.  We say that $\mathcal{A}=(\rho, s,s',\{V_j\}_{j=1}^s,\{r_j\}_{j=1}^s)$ is an atlas in $\mathbb{R}^N$ with parameters $\rho, s,s',\{V_j\}_{j=1}^s,\{r_j\}_{j=1}^s$, briefly an atlas in $\mathbb{R}^N$.  Moreover, we consider the family of all open sets $\Omega\subset \mathbb{R}^N$ satisfying the following:
\par i) $\Omega\subset \cup_{j=1}^s (V_j)_\rho$ and $(V_j)_\rho\cap \Omega\ne \emptyset$
\par ii) $V_j\cap\partial\Omega\ne \emptyset$ for $j=1,\dots, s'$ and $V_j\cap\partial\Omega=\emptyset$ for $s'<j\leq s$
\par iii) for $j=1,\dots,s$ we have
\begin{align*}
&r_j(V_j)=\{ x\in \mathbb{R}^N:   a_{ij}<x_i<b_{ij}, i=1,\dots, N\}, \quad  j=1,\dots, s \\
&r_j(V_j\cap \Omega)=\{ x\in \mathbb{R}^N:   a_{Nj}<x_N<g_j(\bar x)\}, \quad\quad\:\:\:\: j=1,\dots, s'
\end{align*}
where $\bar x=(x_1,\dots, x_{N-1})$, $W_j=\{ x\in \mathbb{R}^{N-1}\! : a_{ij}<x_i<b_{ij}, i=1,\dots, N-1\}$
and $g_j\in C^{k,\gamma}(W_j)$ for $j=1,\dots, s'$, with $k\in\mathbb{N}_0$ and $0\leq \gamma \leq 1$.  Moreover, for $j=1,\ldots, s'$ we have $a_{Nj}+\rho\leq g_j(\bar x)\leq b_{Nj}-\rho$, for all $\bar x\in W_j$.

We say that an open set $\Omega$ is of class $C^{k,\gamma}_M(\mathcal{A})$ if all the functions $g_j$, $j=1,\dots, s'$ defined above are of class $C^{k,\gamma}(W_j)$ and  $\|g_j\|_{C^{k,\gamma}(W_j)}\leq M$.  We say that an open set $\Omega$ is of class $C^{k,\gamma}(\mathcal{A})$ if it is of class $C^{k,\gamma}_M(\mathcal{A})$ for some $M>0$. Also, we say that an open set $\Omega$ is of class $C^{k,\gamma}$ if it is of class $C^{k,\gamma}_M(\mathcal{A})$ for some atlas $\mathcal{A}$ and some $M>0$. Finally, we  denote by $C^{k}$ the class $C^{k,0}$ for $k\in \mathbb{N}\cup\{0\}$.

\end{definition}

It is important to note that if $\Omega $ is a $C^{0}$ bounded open set  then the Sobolev space $W^{m,2}(\Omega )$ (and consequently all the spaces $W^{m,2}(\Omega ) \cap W_0^{k,2}(\Omega)$, $1 \leq k \leq m$) is compactly embedded in $L^2(\Omega )$, see e.g., Burenkov~\cite{Bur_book}. Moreover, by using a common atlas as in Definition \ref{atlas}, it is possible to define a distance.

\begin{definition} {\bf (Atlas distance)}
\label{dev}
Let ${\mathcal{A}} =(\rho , s, s', \{V_j\}_{j=1}^s , \{r_j\}_{j=1}^s )$ be an atlas in ${\mathbb{R}}^N$.
For all $\Omega_1 , \Omega_2\in C^m({\mathcal{A}})$  and for all $h=0,\dots , m$ we set
\begin{equation*}
d^{(h)}_{{\mathcal{A}}}(\Omega_1,\Omega_2)=\max_{j=1, \dots ,s'}\sup_{0\le |\beta | \le h}\sup_{(\bar x , x_N)\in r_j(V_j)}\left| D^{\beta }g_{1j}(\bar x) -D^{\beta } g_{2j}(\bar x)  \right|,
\end{equation*}
where $g_{1j}$, $g_{2j}$ respectively, are the functions describing the boundaries of $\Omega_1, \Omega_2$ respectively, as in Definition \ref{atlas}.
Moreover,  we set $d_{{\mathcal{A}}}=d^{(0)}_{{\mathcal{A}}}$ and we call $d_{{\mathcal{A}}}$  `atlas distance'.
\end{definition}

\subsection{Formulae for higher order derivatives of composite functions}
We recall here few  well-known multidimensional formulae for the derivatives of composite functions. We will use the following notation: by  $\p(A)$ we denote the set of all subsets of a given finite non-empty set $A$ and by $\rm{Part}(A)$ we denote the set of all possible partitions of $A$. Namely, $\pi \in \rm{Part}(A)$ is a set the elements of which are pairwise disjoint subsets of $A$ whose union is $A$. Given $n \in \N$, we often write $\rm{Part(n)}$ in place of $\rm{Part}(\{1,\dots, n\})$ and $\p(n)$ in place of $\p(\{1,\dots,n\})$. Moreover we use the symbol $|A|$ to denote the cardinality of $A$; hence, for example $|\pi|$ with $\pi \in \rm{Part}(A)$ is the number of subsets of $A$ in the partition $\pi$.
Let $\Omega$ be an open set in $\R^N$.  If $I$ is an open set in $\R$ and  $f$ is  a $C^{n}$-function from $I$ to $\R$ and $\Phi$ is  a $C^{n}$ function from $\Omega$ to $I$,  then the  Fa\`{a} di Bruno formula reads
\begin{equation}
\label{eq: Faa di Bruno}
\frac{\partial^n f(\Phi(x))}{\partial x_{i_1} \cdots \partial x_{i_n}} = \sum_{\pi \in {\rm Part}(n)} f^{(|\pi|)}(\Phi(x)) \prod_{S \in \pi} \frac{\partial^{|S|} \Phi(x)}{\prod_{j \in S} \partial x_{i_j}}.
\end{equation}
Moreover, the  Leibnitz formula for the derivatives of the product of two functions $u,v$ of class $C^n(\Omega)$ can can be written as follows
\begin{equation}
\label{eq: Leibnitz}
\frac{\partial^n (uv)}{\partial x_{i_1} \cdots \partial x_{i_n}} = \sum_{S \in \p(n)} \frac{\partial^{|S|} u}{\prod_{j \in S} \partial x_{i_j}} \frac{\partial^{(n-|S|)}v}{\prod_{j \notin S} \partial x_{i_j}},
\end{equation}
where  $j \notin S$ means that $j$ lies in the complementary of $S$ in $\{1, \dotsc, n\}$. We recall that in general,  if   $\Phi$ is  a $C^{n}$ function from an open subset   $U$ of $\R^N$     to an open subset $V$ of $\R^r$, and $f$ is a function in  $W^{n,1}_{loc}(V)$   then the Fa\`{a} di Bruno formula reads
\begin{small}
\begin{equation}
\label{eq: Faa di Bruno mdim}
\frac{\partial^n f(\Phi(x))}{\partial x_{i_1} \cdots \partial x_{i_n}} = \sum_{\pi \in {\rm Part}(n)}\sum_{j_1, \dots , j_{|\pi |}\in \{1, \dots , r \} }
\frac{\partial^{|\pi|} f}{\prod_{k=1}^{|\pi|} \partial x_{j_k}}(\Phi (x))\, \prod^{|\pi|}_{k=1} \frac{\partial^{|S_k|} \Phi^{(j_k)}}{\prod_{l \in S_k} \partial x_{i_l}} 
\end{equation}
\end{small}

\section[Spectral convergence on domains with perturbed boundaries]{Higher order operators on domains with perturbed boundaries}
\label{sec: setting homogeniz}

Let $m \in \N$, $m \geq 2$ and let $\eps > 0$. Let $V(\Omega), V(\Omega_\eps)$ be subspaces of $W^{m,2}(\Omega)$, $W^{m,2}(\Omega_\eps)$ respectively, containing $W_0^{m,2}(\Omega)$, $W^{m,2}_0(\Omega_\eps)$  respectively. Moreover, let $H_{V(\Omega)}$, $H_{V(\Omega_\eps)}$, $Q_\Omega$, $Q_{\Omega_\eps}$ be as in \eqref{relation Q and H^1/2}. A fundamental part of our analysis will be based on the following:

\begin{definition}\label{def: cond C}
(\cite[Definition 3.1]{ArrLamb}). Given open sets $\Omega_\eps$, $\eps > 0$ and $\Omega \in \R^N$ with corresponding elliptic operators $H_{V(\Omega_\eps)}$, $H_{V(\Omega)}$ defined on $\Omega_\eps$, $\Omega$ respectively, we say that condition $(C)$ is satisfied if there exists open sets $K_\eps \subset \Omega \cap \Omega_\eps$ such that
\begin{equation}
\label{def: condition C null measure}
\lim_{\eps \to 0} |\Omega \setminus K_\eps | = 0,
\end{equation}
and the following conditions are satisfied:

\noindent (C1) If $v_\eps \in V(\Omega_\eps)$ and $\sup_{\eps >0} Q_{\Omega_\eps}(v_\eps) < \infty$ then $\lim_{\eps \to 0} \norma{v_\eps}_{L^2(\Omega_\eps \setminus K_\eps)} = 0$.\\
\noindent (C2) For each $\eps >0$ there exists an operator $T_\eps$ from $V(\Omega)$ to $V(\Omega_\eps)$ such that for all fixed $\varphi \in V(\Omega)$
\begin{enumerate}[label=(\roman*)]
\item $\lim_{\eps \to 0} Q_{K_\eps}(T_\eps \varphi -\varphi) = 0$;
\item $\lim_{\eps \to 0} Q_{\Omega_\eps \setminus K_\eps}(T_\eps \varphi) = 0$;
\item $\lim_{\eps \to 0} \norma{T_\eps \varphi}_{L^2(\Omega_\eps)} < \infty$.
\end{enumerate}

\noindent (C3) For each $\eps > 0$ there exists an operator $E_\eps$ from $V(\Omega_\eps)$ to $W^{m,2}(\Omega)$ such that the set $E_\eps(V(\Omega_\eps))$ is compactly embedded in $L^2(\Omega)$ and such that
\begin{enumerate}[label=(\roman*)]
\item If $v_\eps \in V(\Omega_\eps)$  is a sequence such that $\sup_{\eps > 0} Q_{V(\Omega_\eps)}(v_\eps) < \infty$, then $\lim_{\eps \to 0} Q_{K_\eps}(E_\eps v_\eps -v_\eps) = 0$;
\item $$\sup_{\eps >0} \sup_{v \in V(\Omega_\eps)\setminus \{0\}} \frac{\norma{E_\eps v}_{W^{m,2}(\Omega)}}{Q^{1/2}_{\Omega_\eps}(v)} < \infty;$$
\item If $v_\eps \in V(\Omega_\eps)$ for all $\eps > 0$, $\sup_{\eps > 0} Q_{\Omega_\eps}(v_\eps) < \infty$ and there exists $v \in L^2(\Omega)$ such that, up to a subsequence, we have $\norma{E_\eps v_\eps - v}_{L^2(\Omega)} \to 0$, then $v \in V(\Omega)$.
\end{enumerate}
\end{definition}
It is proved  in  \cite[Theorem 3.5]{ArrLamb} that Condition (C) guarantees the spectral convergence of the operators $H_{V(\Omega_\eps)}$ to the operator  $H_{V(\Omega)}$ as $\eps\to 0$.\\
The convergence of the operators is understood in the sense of the compact convergence, as defined in \cite{VainikkoSurvey}. Let us briefly recall the setting. Let ${\mathcal E}$ be the \emph{extension-by-zero operator}, mapping any given  real-valued function $u$ defined on some subset $A$ of ${\mathbb{R}}^N$, to the function ${\mathcal E}u$  such that ${\mathcal E}u = u$ a.e. in $A$ and ${\mathcal E}u = 0$ a.e. in $\R^N \setminus A$.  By using ${\mathcal E}$  we can map functions in $L^2(\Omega )$ to the space $L^2(\Omega _{\epsilon })$, for every $\eps > 0$, so that $\E$ defines a \emph{``connecting system''} between $L^2(\Omega)$ and the family of spaces $(L^2(\Omega_\eps))_{\eps > 0}$. We then say that:
\begin{itemize}
\item $v_\eps\in L^2(\Omega_\eps)$ \emph{$\mathcal{E}$-converges} to $v\in L^2(\Omega)$ if $\|v_\eps-\mathcal{E}v\|_{L^2(\Omega_\eps)}$ $\to 0$ as $\eps\to 0$;
\item a family of bounded linear operators $B_\eps\in \mathcal{L}(L^2(\Omega_\eps))$ \emph{$\mathcal{EE}$- converges} to $B\in \mathcal{L}(L^2(\Omega))$ if $B_\eps v_\eps$ $\mathcal{E}$-converges to $Bv$ whenever $v_\eps$ $\mathcal{E}$-converges to $v$;
\item a family of bounded, compact linear operators $B_\eps\in \mathcal{L}(L^2(\Omega_\eps))$ is said to \emph{$\mathcal{E}$-compact converges} to $B\in \mathcal{L}(L^2(\Omega))$ if $B_\eps$ $\mathcal{EE}$-converges to $ B$ and for any family of functions $v_\eps\in L^2(\Omega_\eps)$ with $\|v_\eps\|_{L^2(\Omega_\eps)}\leq 1$ there exists a subsequence, denoted by $v_\eps$ again, and a function $w\in L^2(\Omega)$ such that $B_\eps v_\eps$ $\mathcal{E}$-converges to $ w$.
\end{itemize}
We refer to \cite[Section 2.2]{ArrLamb}, for further information on this type of convergence. Importantly, in our assumptions on the operators $H_{V(\Omega_\eps)}$, $H_{V(\Omega)}$, the compact convergence of the resolvent operators is a sufficient condition for the spectral convergence. In particular, we have the following

\begin{theorem}\label{arlathm}
Let $\Omega_\eps$, $\eps>0$ and $\Omega$  be open sets in $\R^N$. Let $H_{V(\Omega_\eps)}$, $H_{V(\Omega)}$ be operators with compact resolvents, associated with $V(\Omega_\eps)$, $V(\Omega)$, respectively, as in \eqref{relation Q and H^1/2}, such that condition $(C)$ is satisfied. Let $\lambda_k$, $\lambda_k^\eps$ be the $k$-th eigenvalue of $H_{V(\Omega)}$, $H_{V(\Omega_\eps)}$, respectively. Then $H^{-1}_{V(\Omega_\eps)}$ $\mathcal{E}$-compact converges to $ H^{-1}_{V(\Omega)}$ as $\eps \to 0$. Moreover,
\begin{itemize}
\item[(i)] $\lambda_n^\eps\to \lambda_n$ as $\eps\to 0$, for all $n\in \N$.
\item[(ii)] If $\lambda _n=\lambda _{n+1}=\dots =\lambda _{n+h-1}$ is an eigenvalue of multiplicity $h$ and $\varphi^\eps_n$, $\varphi^\eps_{n+1},\dots , \varphi_{n+h-1}^\eps$ is an orthonormal set in $L^2(\Omega_{\eps})$ of eigenfunctions  associated with the corresponding eigenvalues $\lambda^\eps_n, $ $ \lambda _{n+1}^\eps,$ $\dots ,$ $ \lambda _{n+h-1}^\eps$, then there exists  an orthonormal set  $\varphi_n$, $\varphi_{n+1}$, $\dots , \varphi_{n+h-1}$ in $L^2(\Omega )$ of eigenfunctions  associated with the eigenvalues $(\lambda_{n+t-1})_{t=1}^h$ such that, possibly passing to a suitable subsequence,   $\varphi_{n+i-1}^\eps$   $\mathcal{E}$-converges to $\varphi_{n+i-1}$ as $\eps \to 0  $ for all $i=1,\dots , h$.
  \item[(iii)] If $\lambda _n=\lambda _{n+1}=\dots =\lambda _{n+h-1}$ is an eigenvalue of multiplicity $h$  and  $\varphi_n$, $\varphi_{n+1}$, $\dots , \varphi_{n+h-1}$ is an orthonormal set $L^2(\Omega )$ of eigenfunctions  associated with $(\lambda_{n+t-1})_{t=1}^h$ then for every $\eps >0$ there exists an orthonormal set in $L^2(\Omega_{\eps})$ of eigenfunctions  $\varphi^\eps_n$, $\varphi^\eps_{n+1},\dots , \varphi^\eps_{n+h-1}$  associated with the corresponding eigenvalues $\lambda^\eps_n, \lambda _{n+1}^\eps,$ $\dots ,\lambda _{n+h-1}^\eps$ such that $\varphi_{n+i-1}^\eps$   $\mathcal{E}$-con\-ver\-ges to
  $\varphi_{n+i-1}$ as $\eps \to 0  $ for all $i=1,\dots , h$.
\end{itemize}
\end{theorem}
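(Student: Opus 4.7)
The plan is to reduce everything to proving that the resolvents $H^{-1}_{V(\Omega_\eps)}$ $\mathcal{E}$-compact converge to $H^{-1}_{V(\Omega)}$, after which the spectral conclusions (i)--(iii) follow from the abstract theory of compact convergence of self-adjoint operators developed by Vainikko (cited in the excerpt), exactly as in \cite[Theorem 3.5]{ArrLamb}. Thus I will split the argument into (A) establishing $\mathcal{EE}$-convergence of the resolvents, (B) upgrading this to $\mathcal{E}$-compact convergence, and (C) invoking the abstract machinery to deduce the three spectral statements.

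For (A), let $f_\eps\in L^2(\Omega_\eps)$ $\mathcal{E}$-converge to $f\in L^2(\Omega)$, set $u_\eps:=H^{-1}_{V(\Omega_\eps)}f_\eps$ and $u:=H^{-1}_{V(\Omega)}f$. Testing the weak identity $Q_{\Omega_\eps}(u_\eps,v)=(f_\eps,v)_{L^2(\Omega_\eps)}$ against $v=u_\eps$ gives a uniform bound $Q_{\Omega_\eps}(u_\eps)\leq C$. Condition (C3)(ii) then makes $E_\eps u_\eps$ bounded in $W^{m,2}(\Omega)$; by the compact embedding built into (C3), a subsequence converges strongly in $L^2(\Omega)$ to some $w$, and by (C3)(iii) one has $w\in V(\Omega)$. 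To identify $w$ as the solution of the limit problem, I would fix $\varphi\in V(\Omega)$, plug the admissible test function $T_\eps\varphi\in V(\Omega_\eps)$ from (C2) into the $\eps$-problem, and pass to the limit: on $K_\eps$ use (C3)(i) to replace $u_\eps$ by $E_\eps u_\eps$ and (C2)(i) to replace $T_\eps\varphi$ by $\varphi$; on $\Omega_\eps\setminus K_\eps$ kill the left-hand side via (C2)(ii) and the right-hand side via (C2)(iii) together with the measure condition $|\Omega\setminus K_\eps|\to 0$. The limiting identity $Q_\Omega(w,\varphi)=(f,\varphi)_{L^2(\Omega)}$ for every $\varphi\in V(\Omega)$ forces $w=u$, and uniqueness of the limit lifts the subsequential convergence to the full family. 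The last bit of (A) is to translate $L^2(\Omega)$-convergence of $E_\eps u_\eps$ into $\mathcal{E}$-convergence of $u_\eps$ to $u$: one splits $\Omega_\eps$ into $K_\eps$ and $\Omega_\eps\setminus K_\eps$, using (C3)(i) on $K_\eps$ and (C1) on the complement, while on the $\Omega$ side $\|u\|_{L^2(\Omega\setminus K_\eps)}\to 0$ by absolute continuity of the integral.

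For (B), take any $f_\eps\in L^2(\Omega_\eps)$ with $\|f_\eps\|_{L^2(\Omega_\eps)}\leq 1$ and set $u_\eps:=H^{-1}_{V(\Omega_\eps)}f_\eps$; the same energy estimate yields $Q_{\Omega_\eps}(u_\eps)\leq C$, hence $E_\eps u_\eps$ is bounded in $W^{m,2}(\Omega)$ and, along a subsequence, converges strongly in $L^2(\Omega)$ to some $w$. The splitting argument above then shows that $u_\eps$ $\mathcal{E}$-converges to $w$, which is exactly the definition of $\mathcal{E}$-compact convergence. Together with (A) this gives the first assertion of the theorem.

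Statement (i) is then the standard fact that compact convergence of positive self-adjoint compact resolvents yields convergence of the discrete spectra, and (ii)--(iii) follow by the usual Min-Max/spectral-projector arguments: boundedness of $Q_{\Omega_\eps}(\varphi_n^\eps)=\lambda_n^\eps\to\lambda_n$ gives a uniform $W^{m,2}$-bound, which through (C3) and the above splitting produces a limit eigenfunction, and orthonormality is preserved in the limit via the $\mathcal{E}$-convergence; the converse direction uses the $\mathcal{E}$-compact convergence applied to the $\mathcal{E}$-extensions of an orthonormal basis of the limit eigenspace to construct the approximating $\eps$-orthonormal families. The main obstacle, as in \cite[Theorem 3.5]{ArrLamb}, is the careful bookkeeping in the limiting argument of paragraph (A): the operators $T_\eps$ and $E_\eps$ act in opposite directions between $V(\Omega)$ and $V(\Omega_\eps)$, and the energy form $Q_{\Omega_\eps}$ is only controlled on $K_\eps$ by $Q_{K_\eps}(E_\eps u_\eps)$ up to errors that one must simultaneously absorb in both the quadratic and the $L^2$ parts; handling the boundary strip $\Omega_\eps\setminus K_\eps$ under (C1)--(C2) without quantitative decay is the delicate point and is exactly where the definition of condition $(C)$ is tailor-made to make the argument go through.
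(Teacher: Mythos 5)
Your proposal is correct and follows essentially the same route as the source: the paper itself does not prove this theorem but quotes it from \cite[Theorem 3.5]{ArrLamb}, and your three-step scheme (energy bound plus (C3) for compactness and admissibility of the limit, testing with $T_\eps\varphi$ and splitting over $K_\eps$ and $\Omega_\eps\setminus K_\eps$ via (C1)--(C2) to identify the limit, then Vainikko's abstract theory for (i)--(iii)) is exactly the argument given there. The only cosmetic slip is attributing the vanishing of the right-hand side on $\Omega_\eps\setminus K_\eps$ to (C2)(iii); it actually comes from (C2)(ii), since $Q_{\Omega_\eps\setminus K_\eps}$ already controls the $L^2$ norm there.
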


When the claims $(i)-(ii)-(iii)$ of the previous theorem are verified, we say that $H_{V(\Omega_\eps)}$ \emph{spectrally converges} to $H_{V(\Omega)}$ as $\eps \to 0$.

\subsection{An explicit condition for the spectral stability}
We consider now the following geometric setting:\\[0.2cm]
\textbf{(G1)} There exists a cuboid $V$ of the form $W \times (a,b)$, where $W \subset \R^{N-1}$ is an open, connected and bounded set of class $C^m$, and $g,g_\epsilon \in C^m(\overline{W})$ such that
\begin{align}
\Omega \cap V &= \{(\bar{x}, x_N) \in W \times (a,b) : a < x_N < g(\bar{x}) \}, \label{scatola1} \\
\Omega_\epsilon \cap V &= \{(\bar{x}, x_N) \in W \times (a,b) : a < x_N < g_\epsilon(\bar{x}) \}.\label{scatola2}
\end{align}
Assume that $\Omega \setminus V = \Omega_\epsilon \setminus V$ for all $\eps > 0$.

\vspace{0.2cm}

It is convenient to set $\Omega_0=\Omega$. According to Def.~\ref{atlas}, if $\Omega_\eps \in C^m({\mathcal{A}})$ for all $\eps \geq 0$, then we can assume \textbf{(G1)} without loss of generality. For all $\eps\geq 0$, let us consider the quadratic forms $Q_{\Omega_{\eps}}$  on $\Omega_{\eps}$ defined as in \eqref{eq: Q}, where the coefficients $A_{\alpha\beta}$ are independent of $\eps\geq 0$ and satisfy the uniform ellipticity condition \eqref{def: unif_ellip}. Then we consider the non-negative self-adjoint operators $H_{V(\Omega_{\eps})}$ defined by \eqref{relation Q and H^1/2} with $V(\Omega)$ replaced by $V(\Omega_{\eps}) = W^{m,2}(\Omega_{\eps}) \cap W_0^{k,2}(\Omega_{\eps})$ for some $1\leq k < m$. Since $\Omega_{\eps}$ is of class $C^m$, $V(\Omega_{\eps})$ is compactly embedded in $L^2(\Omega_{\eps})$ hence  $H_{V(\Omega_{\eps})}$ has compact resolvent.

We now state our first result, concerning an explicit condition sufficient to guarantee the spectral convergence of the operators $H_{V(\Omega_{\eps})}$. 
This theorem is a generalisation of \cite[Lemma 6.2]{ArrLamb}.

\begin{theorem}
\label{lemma: 6.2 improved}
Let $\Omega_\eps$, $\eps \geq 0$ satisfy assumption \textbf{(G1)}. Suppose that for some $k\in \N$, with   $1\leq k < m$,  $V(\Omega_{\eps}) = W^{m,2}(\Omega_{\eps}) \cap W^{k,2}_0(\Omega_{\eps})$ for all $\eps\geq 0$.
If for all $\epsilon>0$ there exists $\kappa_\epsilon >0$ such that
\begin{enumerate}[label=(\roman*)]
\item $\kappa_\epsilon > \norma{g_\epsilon - g}_\infty$, \quad $\forall \epsilon > 0$, \quad $\lim_{\epsilon \to 0} \kappa_\epsilon  = 0$, 
\item $\lim_{\epsilon \to 0} \frac{\norma{D^\beta(g_\epsilon - g)}_\infty}{\kappa_\epsilon^{m-\abs{\beta}-k+1/2}} = 0$, \:$\forall \beta \in \N_0^N$ with $|\beta |\leq m$,
\end{enumerate}
then $H^{-1}_{V(\Omega_\epsilon)} $ $\mathcal{E}$-compact converges to $ H^{-1}_{V(\Omega)}$ as $\eps \to 0$. In particular, $H_{V(\Omega_\epsilon)}$ spectrally converges to $H_{V(\Omega)}$ as $\eps \to 0$
\end{theorem}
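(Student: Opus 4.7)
By Theorem~\ref{arlathm} the claim reduces to verifying condition~$(C)$ of Definition~\ref{def: cond C}. I take
\[
K_\eps := (\Omega\setminus V)\cup\{(\bar x, x_N)\in V : a < x_N < g(\bar x)-\kappa_\eps\},
\]
which by~(i) is contained in $\Omega\cap\Omega_\eps$ and satisfies $|\Omega\setminus K_\eps|\leq\kappa_\eps|W|\to 0$. The connecting diffeomorphism $\Phi_\eps:\Omega_\eps\to\Omega$ is constructed by fixing a cut-off $\chi\in C^\infty(\R)$ with $\chi\equiv 0$ on $(-\infty,-1]$ and $\chi\equiv 1$ on $[0,\infty)$, and setting
\[
\Phi_\eps(\bar x, x_N) := \bigl(\bar x,\,x_N+\chi((x_N-g_\eps(\bar x))/\kappa_\eps)(g(\bar x)-g_\eps(\bar x))\bigr)
\]
on $\Omega_\eps\cap V$ and $\Phi_\eps\equiv\mathrm{id}$ elsewhere. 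By~(i), $\Phi_\eps$ is a $C^m$-diffeomorphism coinciding with the identity on $K_\eps$ and sending $\partial\Omega_\eps$ onto $\partial\Omega$. A repeated application of \eqref{eq: Faa di Bruno}--\eqref{eq: Leibnitz} yields the key estimate, for $|\alpha|\geq 1$,
\[
|D^\alpha(\Phi_\eps-\mathrm{id})(x)|\leq C\sum_{|\beta|\leq |\alpha|}\|D^\beta(g-g_\eps)\|_\infty\,\kappa_\eps^{-(|\alpha|-|\beta|)},
\]
which by hypothesis~(ii) is majorised by $\eta_\eps\,\kappa_\eps^{m-k+1/2-|\alpha|}$ with $\eta_\eps\to 0$ independent of $\beta$; an analogous estimate holds for $\Phi_\eps^{-1}$. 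Condition~(C1) then follows from iterated one-dimensional Hardy inequalities applied to $v_\eps\in W^{k,2}_0(\Omega_\eps)$ on the strip $\Omega_\eps\setminus K_\eps$ of thickness $\sim\kappa_\eps$: the $(k-1)$-fold vanishing of $v_\eps$ on $\partial\Omega_\eps$ gives $\|v_\eps\|_{L^2(\Omega_\eps\setminus K_\eps)}\leq C\kappa_\eps^k\|v_\eps\|_{W^{k,2}(\Omega_\eps)}\to 0$.

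Set $T_\eps\varphi := \varphi\circ\Phi_\eps$: by trace-preservation $T_\eps$ maps $V(\Omega)\to V(\Omega_\eps)$, and both~(C2)(i) and~(C2)(iii) are immediate (the latter by a change of variables with uniformly bounded Jacobian). The substantial step is~(C2)(ii). Reducing to $\varphi\in V(\Omega)\cap C^\infty(\overline\Omega)$ by density, the $(k-1)$-th order vanishing of $\varphi$ at $\partial\Omega$ combined with Taylor's formula gives $|D^j\varphi(x)|\lesssim \dist(x,\partial\Omega)^{(k-j)_+}$, whence $\int_{\Omega_\eps\setminus K_\eps}|D^j\varphi\circ\Phi_\eps|^2\,dx\lesssim\kappa_\eps^{2(k-j)_+ + 1}$. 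Expanding $D^m(\varphi\circ\Phi_\eps)$ via~\eqref{eq: Faa di Bruno mdim}, a partition $\pi$ of $\{1,\dots,m\}$ with $j=|\pi|$ blocks of sizes $m_1,\dots,m_j$ (of which $p$ are of size $\geq 2$) contributes a summand whose $L^2(\Omega_\eps\setminus K_\eps)$-norm squared is bounded by $\prod_l\|D^{m_l}\Phi_\eps\|_\infty^2\cdot\int|D^j\varphi|^2$. Inserting the two estimates above and using $\sum_l m_l = m$, the overall $\kappa_\eps$-exponent equals $2(p-1)(m-k)-p+1$ when $j\leq k$, and is strictly positive when $j>k$ (the case $p=0$ forcing $j=m$, with residual factor $\kappa_\eps$). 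In every case this exponent is non-negative and is multiplied by an $\eta_\eps^{2p}\to 0$ prefactor as soon as $p\geq 1$. For~(C3) I define $E_\eps v$ as a Hestenes-type extension of $v|_{K_\eps}$ to $\Omega$, exploiting the fact that $\partial K_\eps$ has $\eps$-uniform $C^m$-norm (being the graph of the fixed function $g-\kappa_\eps$ together with smooth pieces of $\partial\Omega$): this gives the required uniform $W^{m,2}(\Omega)$-bound in~(C3)(ii), while~(C3)(i) is automatic from $E_\eps v = v$ on $K_\eps$. For~(C3)(iii), given $v_\eps\in V(\Omega_\eps)$ with $\sup_\eps Q_{\Omega_\eps}(v_\eps)<\infty$ and $E_\eps v_\eps\to v$ in $L^2(\Omega)$, one compares $E_\eps v_\eps$ with the auxiliary sequence $v_\eps\circ\Phi_\eps^{-1}\in W^{k,2}_0(\Omega)$; their $L^2$-difference goes to zero (since both agree with $v_\eps$ on $K_\eps$ and their masses on $\Omega\setminus K_\eps$ are controlled by~(C1)), so the limit $v$ belongs to $W^{k,2}_0(\Omega)\cap W^{m,2}(\Omega)=V(\Omega)$ by weak closedness of $W^{k,2}_0(\Omega)$ in $W^{m,2}(\Omega)$ and Rellich--Kondrachov.

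\textbf{Main obstacle.} The decisive borderline is the partition $\pi=\{\{1,\dots,m\}\}$ ($j=p=1$) in~(C2)(ii), where $\|D^m\Phi_\eps\|_\infty^2\lesssim\eta_\eps^2\,\kappa_\eps^{1-2k}$ is matched \emph{exactly} by $\int_{\Omega_\eps\setminus K_\eps}|D^1\varphi|^2\lesssim\kappa_\eps^{2k-1}$, leaving only the pure $\eta_\eps^2\to 0$ factor. The exponent $m-|\beta|-k+1/2$ in~(ii) is tuned precisely for this cancellation: a stricter requirement would be overly restrictive (failing to yield the optimal threshold $\alpha>m-k+1/2$ in the periodic homogenisation application), while any weakening would allow the $j=1$ term to diverge. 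The main technical burden, then, is the combinatorial bookkeeping over all partitions $\pi$ and, in~(C3), the comparison between the extension $E_\eps v_\eps$ (which is not required to vanish at $\partial\Omega$) and the composition $v_\eps\circ\Phi_\eps^{-1}$ (which does vanish), so that the common $L^2$-limit is identified as an element of $V(\Omega)$.
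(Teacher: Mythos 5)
Your proposal follows essentially the same route as the paper's proof: verification of condition $(C)$ with a graph-straightening diffeomorphism supported in a $\kappa_\eps$-collar, the Taylor-expansion decay estimate $\int_{\Omega_\eps\setminus K_\eps}|D^j\varphi\circ\Phi_\eps|^2\lesssim \kappa_\eps^{2(k-j)+1}$ coming from the $(k-1)$-fold vanishing of traces, and the Fa\`a di Bruno bookkeeping in which the single-block partition is exactly neutralised by the exponent $m-|\beta|-k+1/2$; your choices of a cut-off instead of the paper's polynomial interpolation in $h_\eps$, of a Hestenes extension from $K_\eps$ instead of the standard Sobolev extension from $\Omega_\eps$, and of the comparison sequence $v_\eps\circ\Phi_\eps^{-1}$ instead of extension by zero in (C3)(iii) are equivalent implementations. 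One detail to repair: with transition width exactly $\kappa_\eps$ and $\|g-g_\eps\|_\infty$ possibly as large as $\kappa_\eps$, the vertical derivative $1+\chi'\,\kappa_\eps^{-1}(g-g_\eps)$ need not stay positive (any smooth cut-off over a unit interval has $\max|\chi'|>1$), so you must widen the collar to $M\kappa_\eps$ for a suitable $M$ — precisely what the paper does by taking $k_\eps=M\kappa_\eps$ with $M>2m$.
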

\begin{proof}
We first observe that the last statement is a direct consequence of Theorem \ref{arlathm}. The case $k=1$ is proved in \cite[Lemma 6.2]{ArrLamb}. Thus, we suppose $k>1$. It is possible to assume directly that $\Omega=\Omega\cap V$ and $\Omega_{\eps}=\Omega_{\eps}\cap V$ as in \eqref{scatola1} and \eqref{scatola2} respectively.
Define $k_\eps = M \kappa_\eps$ for a suitable constant $M> 2m$. Let $\tilde{g}_\eps = g_\eps - k_\eps$ and
\[
K_\eps = \{(\bar{x},x_N) \in W \times ]a,b[ : a < x_N < \tilde{g}_\eps(\bar{x})  \}.
\]
Note that with this definition of $K_\eps$ \eqref{def: condition C null measure} is satisfied. By the standard one dimensional estimate
\begin{equation}
\label{proof: boundedness Sob}
\norma{f}_{L^\infty(a,b)} \leq C \norma{f}_{W^{1,2}(a,b)},
\end{equation}
and Tonelli Theorem it follows that condition (C1) is satisfied.\\
We now  define a suitable family of diffeomorphisms  $\Phi_\eps : \overline{\Omega}_\eps \to \overline{\Omega}$ by setting
$$\Phi_\eps(\bar{x}, x_N) = (\bar{x}, x_N - h_\eps(\bar{x}, x_N)),$$
for all $(\bar{x}, x_N) \in \overline{\Omega}_\eps$, where
\[
h_\eps(\bar{x}, x_N) =
\begin{cases}
0, &\textup{if $a \leq x_N \leq \tilde{g}_\eps(\bar{x})$,}\\
(g_\eps(\bar{x}) - g(\bar{x})) \bigg( \frac{x_N - \tilde{g}_\eps(\bar{x})}{g_\eps(\bar{x}) - \tilde{g}_\eps(\bar{x})} \bigg)^{m+1} &\textup{if $\tilde{g}_\eps(\bar{x}) < x_N \leq g_\eps(\bar{x})$.}
\end{cases}
\]
Then consider the map  $T_\eps$  from $V(\Omega)$ to $V(\Omega_\eps)$ defined by
\begin{equation}
\label{def: T eps high}
T_\eps \varphi = \varphi \circ \Phi_\eps,
\end{equation}
for all $\varphi \in V(\Omega)$. One can check that $T_\eps$ is well-defined and that condition (C2)(i) is satisfied. We now want to prove that conditions (C2)(ii), (iii) are satisfied. We need to estimate the derivatives of $\varphi \circ \Phi_\epsilon$. Here we can improve the estimate given in \cite[Lemma 6.2]{ArrLamb} by taking advantage of the decay of $D^\gamma\varphi$ in a neighbourhood of $\partial \Omega$, for $\abs{\gamma} \leq k-1$. We divide the proof in two steps.\smallskip

\textbf{Step 1.} We aim at proving  a decay estimates for the $L^2$-norms of the derivatives of $\varphi$ near the boundary, namely estimate \eqref{eq: ineq decay general}. First, note that
 \[
 \Phi_\epsilon(\Omega_\epsilon \setminus K_\epsilon) = \Omega \setminus K_\eps = \{ (\bar{x},x_N) \in \Omega :\, \bar{x}\in W,\: g_\epsilon(\bar{x}) - k_\epsilon \leq x_N \leq g(\bar{x}) \},
 \]
 for any $\epsilon > 0$. Fix $x \in \Phi_\epsilon(\Omega_\epsilon \setminus K_\epsilon)$ and $\beta \in \N_0^N$, $\abs{\beta} \leq k-1$. Suppose for the moment $\varphi \in C^m(\overline{\Omega})$. By the Taylor's formula with remainder in integral form, we get that
\[  \label{taylor}
D^{\beta}
\varphi(x)
= \sum_{l=0}^{k-1-\abs{\beta}} \frac{1}{l!} \frac{\partial^{l}{(D^\beta \varphi(\bar{x},g(\bar{x})))}}{\partial{x_N^l}} (x_N - g(\bar{x}))^l + R(\beta,x),
\]
where
\begin{small}
\[
R(\beta,x):= \frac{(x_N - g(\bar{x}))^{k-\abs{\beta}}}{(k-\abs{\beta}-1)!} \int_0^1(1-t)^{k-1-\abs{\beta}} \frac{\partial^{k-\abs{\beta}}}{\partial{x_N^{k-\abs{\beta}}}} D^\beta \varphi(\bar{x}, g(\bar{x}) + t (x_N - g(\bar{x})) \, \diff{t}\, .
\]
\end{small}
Note that $ -2k_\epsilon \leq g_\epsilon(\bar{x}) - g(\bar{x}) -k_\epsilon \leq x_N - g(\bar{x}) \leq 0$. By Jensen's inequality,
\begin{equation}
\label{proof: estimate remainder}
|R(\beta,x)|^2 \leq (2 k_\epsilon )^{2(k-\abs{\beta})} \int_0^1 \Bigg\lvert\frac{\partial^{k-\abs{\beta}}}{\partial{x_N^{k-\abs{\beta}}}} D^\beta \varphi(\bar{x}, g(\bar{x}) + t (x_N - g(\bar{x}))\Bigg\rvert^2 \, \diff{t}.
\end{equation}
An integration in the variable $x_N$ in \eqref{proof: estimate remainder} and inequality \eqref{proof: boundedness Sob} applied to the interval $(a,g(\bar x))$ yield
\begin{small}
\begin{equation}
\label{proof: estim remainder 2}
\begin{split}
\int_{g_\epsilon(\bar x) - k_\epsilon}^{g(\bar{x})}|R(\beta,x)|^2 \,\diff{x_N} \leq C k_\epsilon^{2(k-\abs{\beta}) + 1} \Bigg\lVert\frac{\partial^{k-\abs{\beta}+1}}{\partial{x_N^{k-\abs{\beta}+1}}} D^\beta \varphi(\bar{x}, \cdot)\Bigg\rVert_{W^{2,2}(a,g(\bar{x}))}^2
\end{split}
\end{equation}
\end{small}
By integrating both sides of \eqref{proof: estim remainder 2} with respect to $\bar{x}\in W$, we finally get
\begin{equation}
\label{eq: ineq decay}
 \int_{\Phi_\epsilon(\Omega_\epsilon \setminus K_\epsilon)} |R(\beta,x)|^2 \,\diff{x} \leq C k_\epsilon^{2(k-\abs{\beta}) + 1} \norma{\varphi}^2_{W^{m,2}(\Omega)},
\end{equation}
for sufficiently small $\epsilon$, for all $|\beta| \leq k-1$. Thus,  by \eqref{taylor} we get
\begin{equation}
\label{proof: estim derivatives}
\begin{split}
&\int_{\Phi_\epsilon(\Omega_\epsilon \setminus K_\epsilon)} |D^{\beta}\varphi(x)|^2 \,\diff{x}\leq C k_\epsilon^{2(k-\abs{\beta}) + 1} \norma{\varphi}^2_{W^{m,2}(\Omega)}\\
&+ C \int_W \int_{g_\epsilon (\bar x)- k_\epsilon}^{g(\bar{x})}\Bigg\lvert\sum_{l=0}^{k-1-\abs{\beta}} \frac{\partial^{l}{(D^\beta \varphi(\bar{x},g(\bar{x}))}}{\partial{x_N^l}}\Bigg\rvert^2 |x_N - g(\bar{x})|^{2l}\diff{\bar{x}}\,\diff{x_N}\, ,
\end{split}
\end{equation}
for all sufficiently small $\epsilon$, and $|\beta| \leq k-1$. We now estimate the last integral in the right-hand side of \eqref{proof: estim derivatives} in the following way
\begin{equation}
\label{proof: estim derivatives 2}
\begin{split}
&\sum_{l=0}^{k-1-\abs{\beta}}  \int_W \int_{g_\epsilon (\bar x) - k_\epsilon}^{g(\bar{x})}\Bigg\lvert\frac{\partial^{l}{(D^\beta \varphi(\bar{x},g(\bar{x}))}}{\partial{x_N^l}}\Bigg\rvert^2 |x_N - g(\bar{x})|^{2l}\diff{\bar{x}}\,\diff{x_N}\\
&\leq \sum_{l=0}^{k-1-\abs{\beta}} k_\epsilon^{2l+1} \int_W \Bigg\lvert\frac{\partial^{l}{(D^\beta \varphi(\bar{x},g(\bar{x}))}}{\partial{x_N^l}}\Bigg\rvert^2 \diff{\bar{x}}\\
&= \sum_{l=0}^{k-1-\abs{\beta}} C  k_\epsilon^{2l+1}   \Bigg\rVert \frac{\partial^{l}{(D^\beta \varphi)}}{\partial{x_N^l}} \Bigg\lVert^2_{L^2(\Gamma )},
\end{split}
\end{equation}
where $\Gamma:=\{(\bar x, g(\bar x)):\ \bar x\in W  \}$.
Thus, by \eqref{proof: estim derivatives}, \eqref{proof: estim derivatives 2} we obtain
\begin{equation}
\label{eq: ineq 1}
\begin{split}
&\int_{\Phi_\epsilon(\Omega_\epsilon \setminus K_\epsilon)} |D^{\beta}\varphi(x)|^2 \,\diff{x}\\
&\leq \sum_{l=0}^{k-1-\abs{\beta}} C k_\epsilon^{2l+1} \Bigg\rVert \frac{\partial^{l}{(D^\beta \varphi)}}{\partial{x_N^l}} \Bigg\lVert^2_{L^2(\Gamma )} + C k_\epsilon^{2(k-\abs{\beta}) + 1} \norma{\varphi}^2_{W^{m,2}(\Omega)}.
\end{split}
\end{equation}
Inequality~\eqref{eq: ineq 1} holds for smooth functions. If $\varphi \in W^{m,2}(\Omega) \cap W^{k,2}_0(\Omega)$, then we can choose a sequence $(\psi_n)_{n\geq 1} \subset C^{\infty}(\overline{\Omega})$ such that $\psi_n \to \varphi$ in $W^{m,2}(\Omega)$ (this is possible because $\partial \Omega$ is Lipschitz continuous). We then use~\eqref{eq: ineq 1} for $\psi_n$, and we pass to the limit as $n \to \infty$ by using the continuity of the trace operator and standard estimates on the intermediate derivatives of Sobolev functions (see e.g., \cite[\S4.4]{Bur_book}). We deduce that
\begin{equation}
\label{eq: ineq decay general}
\int_{\Phi_\epsilon(\Omega_\epsilon \setminus K_\epsilon)} |D^{\beta}\varphi(x)|^2 \,\diff{x} \leq C k_\epsilon^{2(k-\abs{\beta}) + 1} \norma{\varphi}^2_{W^{m,2}(\Omega)},
\end{equation}
for all sufficiently small $\epsilon$. Actually, inequality~\eqref{eq: ineq decay general} holds also for $|\beta| = k$ (possibly modifying the constant in the right hand side). Indeed, $D^\beta \varphi \in W^{2,2}(\Omega)$, for any $|\beta| = k$, hence
by standard boundedness of Sobolev functions on almost all vertical lines (see \eqref{proof: boundedness Sob}) we find that 
\[
\int_W \int_{g_\epsilon - k_\epsilon}^{g(\bar{x})}|D^{\beta}\varphi(x)|^2 \,\diff{x_N}\, \diff{\bar{x}}
\leq 2k_\epsilon \int_W \norma{D^\beta \varphi(\bar{x},\cdot) }^2_\infty \diff{\bar{x}}
\leq 2 C k_\epsilon \norma{\varphi }^2_{W^{m,2}(\Omega)}.
\]
This concludes Step 1.\smallskip

\textbf{Step 2.} We claim that Condition (C2)(ii) holds. Let $\varphi \in V(\Omega)$ and let $\alpha$ be a fixed multiindex such that $|\alpha|=m$. We write
\begin{equation}\label{eq: mortal polynomial}
D^\alpha \varphi (\Phi_\epsilon(x)) = \sum_{1 \leq |\beta| \leq m} D^\beta \varphi(\Phi_\epsilon(x)) p^\alpha_{m,\beta}(\Phi_\epsilon)(x),
\end{equation}
where $p^\alpha_{m,\beta}(\Phi_\epsilon) $ is a homogeneous polynomial of degree $|\beta|$ in derivatives of $\Phi_\epsilon$ of order not exceeding $m-|\beta|+1$. Note that the polynomial $p^\alpha_{m,\beta}(\Phi_\epsilon)$ appearing in~\eqref{eq: mortal polynomial} is the sum of several terms $\Theta$ in the following form
\[
\Theta = D^{k_1}\Biggl( \delta_{j_1,N} - \frac{\partial h_\eps}{\partial x_{j_1}}\Biggr) \cdots D^{k_n}\Biggl( \delta_{j_{n},N} - \frac{\partial h_\eps}{\partial x_{j_{n}}}\Biggr) \frac{\partial \Phi^{(i_{n+1})}}{\partial x_{i_{n+1}}} \cdots \frac{\partial \Phi^{( i_{|\beta|} )}}{\partial x_{i_{|\beta|}}},
\]
where\footnote{Here it is understood that for $|\beta|=1$ the terms $\frac{\partial \Phi^{(j_{n+1})}}{\partial x_{i_{n+1}}} \cdots \frac{\partial \Phi^{(j_{|\beta|})}}{\partial x_{i_{|\beta|}}}$ are not present; recall that $m \geq 2$.} $1 \leq n \leq |\beta|$, $1\leq j_i \leq N$ for all $i=1,\dots, n$, $i_{n+1}, \dots , i_{|\beta|} $ are in $\{1, \dots, N-1\}$, and $k_1,\dots,k_{n}$ are multiindexes satisfying $\abs{k_1} + \dots + \abs{k_{n}}= m-\abs{\beta}$. Moreover,  $\Theta$ is a sum of terms of the type $D^{L_1}h_\eps  \cdots  D^{L_l} h_\eps$, for all $1 \leq l \leq n$, for suitable  multiindexes $L_1,...,L_l$ satisfying
\begin{equation}\label{eq: sum of Li}
\abs{L_1} + \dots + \abs{L_l} = m- \abs{\beta} + l.
\end{equation}
Now by \cite[Inequality (6.7)]{ArrLamb} and hypothesis $(iii)$ we have
\[
\begin{split}
&\norma{D^{L_1}h_\eps \cdots D^{L_l} h_\eps}_\infty \\
&\leq C \Biggl(\sum_{\abs{\gamma_1} \leq \abs{L_1}} \frac{\norma{D^{\gamma_1}(g_\eps-g)}_\infty}{\kappa_\epsilon^{\abs{L_1} - \abs{\gamma_1}}} \Biggr) \cdots \Biggl(\sum_{\abs{\gamma_l} \leq \abs{L_l}} \frac{\norma{D^{\gamma_l}(g_\eps-g)}_\infty}{\kappa_\epsilon^{\abs{L_l} - \abs{\gamma_l}}} \Biggr)\\
&\leq  o(1) \Biggl(\sum_{\abs{\gamma_1} \leq \abs{L_1}} \frac{\kappa_\epsilon^{m-\abs{\gamma_1}-k+1/2}}{\kappa_\epsilon^{\abs{L_1} - \abs{\gamma_1}}} \Biggr) \cdots \Biggl(\sum_{\abs{\gamma_l} \leq \abs{L_l}} \frac{\kappa_\epsilon^{m-\abs{\gamma_l}-k+1/2}}{\kappa_\epsilon^{\abs{L_l} - \abs{\gamma_l}}} \Biggr)\\
&\leq  o(1) \kappa_\epsilon^{l(m-k+1/2) - \sum_i \abs{L_i}} =  o(1) \kappa_\epsilon^{l(m-k+1/2) - \sum_i \abs{L_i} - \abs{\beta} + k + 1/2} \cdot \kappa_\epsilon^{\abs{\beta} - k - 1/2}\\
&\leq  o(1) \kappa_\epsilon^{\abs{\beta} - k - 1/2}
\end{split}
\]
where the last inequality holds provided that
$$
l(m-k+1/2) - \sum_i \abs{L_i} - \abs{\beta} + k + 1/2 \geq 0.
$$
By~\eqref{eq: sum of Li}, we have to check that
$
l(m-k+1/2) - (m- \abs{\beta} + l) - \abs{\beta} + k + 1/2 \geq 0,
$
which is verified if and only if
$
l(m-k-1/2) \geq m-k-1/2,
$
and this holds true because $m-k-1/2 > 0$ and $l\geq 1$. Hence we have proved that
\begin{equation} \label{eq: estimate p}
\norma{p^\alpha_{m,\beta}(\Phi_\epsilon)}_\infty \leq \, o(1)\, \kappa_\epsilon^{\abs{\beta} - k - 1/2}.
\end{equation}

By inequalities~\eqref{eq: ineq decay general} and \eqref{eq: estimate p}, we deduce that
\begin{equation}
\label{proof: big est Q Omega eps}
\begin{split}
&Q_{\Omega_\epsilon\setminus K_\epsilon}( T_\epsilon\varphi) \leq \int_{\Omega_\epsilon\setminus K_\epsilon} \abs{\varphi(\Phi_\epsilon) }^2 \,\diff{x} + C \sum_{|\alpha| = m} \int_{\Omega_\epsilon\setminus K_\epsilon} \abs{D^\alpha \varphi(\Phi_\epsilon) }^2 \,\diff{x}\\
&\leq C \int_{\Phi_\eps(\Omega_\eps \setminus K_\epsilon)} \abs{\varphi}^2 \,\diff{x} + C \sum_{\substack{|\alpha| = m \\ 1\leq |\beta| \leq k}} \norma{p^\alpha_{m,\beta}(\Phi_\epsilon)}_\infty^2 \int_{\Omega_\epsilon\setminus K_\epsilon} \abs{D^\beta \varphi(\Phi_\epsilon(x)) }^2 \,\diff{x}\\
 &+ C \sum_{\substack{|\alpha| = m \\ k< |\beta| \leq m}} \norma{p^\alpha_{m,\beta}(\Phi_\epsilon)}_\infty^2 \int_{\Omega_\eps\setminus K_\epsilon} \abs{D^\beta \varphi(\Phi_\epsilon(x)) }^2 \,\diff{x}\\
&\leq C\norma{\varphi}^2_{L^2(\Omega \setminus K_\epsilon)} + o(1) \kappa_\epsilon^{2(|\beta|-k-1/2)} \kappa_\epsilon^{2(k-\abs{\beta}) + 1} + o(1)\norma{\varphi}^2_{W^{m,2}(\Omega\setminus K_\epsilon)},
\end{split}
\end{equation}
for all $\eps > 0$ sufficiently small. Since the right-hand side of \eqref{proof: big est Q Omega eps}  vanishes as $\epsilon \to 0$ we conclude that condition $(C2) (ii)$ is satisfied. \\
It remains to prove condition (C3). To prove that conditions (C3)(i), (C3)(ii) are satisfied it is sufficient to set $E_\eps u = ({\rm Ext}_{\Omega_\eps} u)|_{\Omega}$ for all $u \in V(\Omega_\eps)$, where ${\rm Ext}_{\Omega_\eps}$ is the standard Sobolev extension operator mapping $W^{m,2}(\Omega_\eps)$ to $W^{m,2}(\R^N)$. Finally, in order to prove condition (C3)(iii) it is sufficient to prove that the weak limit $v$ of the uniformly bounded sequence $v_\eps$ (appearing in the statement of condition (C3)(iii)) lies in $W^{k,2}_0(\Omega)$. This is easily achieved by considering the extension-by-zero of the functions $v_\eps$ outside $\Omega_\eps$, passing to the limit and recalling that the limit set $\Omega$ has Lipschitz boundary. \qedhere

\end{proof}
%

\vspace{0.3cm}

Theorem \ref{lemma: 6.2 improved} can be actually applied to open sets $\Omega $ in the atlas class  $C^m({\mathcal{A}})$  by requiring that the assumptions  of  Lemma \ref{lemma: 6.2 improved} are satisfied by all the profile functions $g_j$ describing their boundaries.  
%
%
%
Then we can prove the following

\begin{theorem}Let ${\mathcal{A}}$ be an atlas in ${\mathbb{R}}^N$, $M>0$, $m\in {\mathbb{N}}$, $m\geq 2$. For all $\eps \geq 0$, let $\Omega_{\eps} \in C^m_M(\mathcal{A})$. Let $k\in \N$   with   $1\leq k < m$ and define, for all $\eps\geq 0$,  $V(\Omega_{\eps}) = W^{m,2}(\Omega_{\eps}) \cap W^{k,2}_0(\Omega_{\eps})$.  If
$$
\lim_{\eps\to 0}d_{\mathcal{A}}^{(m-k)}(\Omega_{\eps}, \Omega )=0,
$$
then condition {\bf (C)} is satisfied, hence  $H^{-1}_{V(\Omega_{\epsilon})}$ $\mathcal{E}$-compact converges to $ H^{-1}_{V(\Omega )}$ as $\eps\to 0$.
\end{theorem}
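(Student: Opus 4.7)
The plan is to reduce this atlas-level statement to the single-chart result of Theorem~\ref{lemma: 6.2 improved} by localization. Because $\Omega_\eps \in C^m_M(\mathcal{A})$, in each cuboid $V_j$ with $j \leq s'$ the boundary of $\Omega_\eps$ is the graph of $g_{j\eps} \in C^m(\overline{W_j})$ with $\|g_{j\eps}\|_{C^m(\overline{W_j})} \leq M$, while $V_j$ with $j > s'$ does not meet $\partial\Omega_\eps$ and there is nothing to do there. Setting $\delta_\eps := d_\mathcal{A}^{(m-k)}(\Omega_\eps, \Omega)$, the hypothesis gives $\|D^\beta(g_{j\eps}-g_j)\|_\infty \leq \delta_\eps \to 0$ whenever $|\beta| \leq m-k$, while the $C^m_M$-bound forces $\|D^\beta(g_{j\eps}-g_j)\|_\infty \leq 2M$ for $m-k < |\beta| \leq m$.

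The quantitatively delicate step is to exhibit a single scale $\kappa_\eps \to 0$ that simultaneously verifies hypotheses (i)--(ii) of Theorem~\ref{lemma: 6.2 improved} in every boundary chart. I would take $\kappa_\eps := \delta_\eps^{\theta}$ for any fixed $\theta \in (0,\,2/(2m-2k+1))$. Then $\kappa_\eps > \|g_{j\eps}-g_j\|_\infty$ eventually, and for $|\beta| \leq m-k$ the exponent $m-|\beta|-k+1/2 \geq 1/2$ is positive, so
\[
\frac{\|D^\beta(g_{j\eps}-g_j)\|_\infty}{\kappa_\eps^{m-|\beta|-k+1/2}} \leq \delta_\eps^{\,1-\theta(m-|\beta|-k+1/2)} \to 0,
\]
since $1-\theta(m-k+1/2) > 0$ by the choice of $\theta$. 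For $m-k < |\beta| \leq m$ the exponent is $\leq -1/2$ and the crude bound $\|D^\beta(g_{j\eps}-g_j)\|_\infty \leq 2M$ combined with $\kappa_\eps \to 0$ gives $\|D^\beta(g_{j\eps}-g_j)\|_\infty \cdot \kappa_\eps^{|\beta|+k-m-1/2} \to 0$. Hence Theorem~\ref{lemma: 6.2 improved} and its proof apply in each chart $V_j$, $j \leq s'$, producing a local subset $K_\eps^{(j)} \subset \Omega \cap V_j$, a local diffeomorphism $\Phi_\eps^{(j)}$ that equals the identity outside a layer of width $\sim\kappa_\eps$, and the local quantitative estimates \eqref{eq: ineq decay general}--\eqref{eq: estimate p}.

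To assemble global objects verifying Def.~\ref{def: cond C}, I would fix an $\eps$-independent partition of unity $\{\psi_j\}_{j=1}^{s}$ subordinate to $\{V_j\}$ and set
\[
K_\eps := \bigcap_{j \leq s'} \bigl(K_\eps^{(j)} \cup (\Omega \setminus V_j)\bigr), \qquad \Phi_\eps(x) := x + \sum_{j \leq s'} \psi_j(x)\bigl(\Phi_\eps^{(j)}(x)-x\bigr),
\]
so that $|\Omega \setminus K_\eps| \to 0$, $\Phi_\eps$ coincides with the identity on $K_\eps$ and, for $\eps$ sufficiently small, is a $C^m$-diffeomorphism of $\overline{\Omega_\eps}$ onto $\overline{\Omega}$ by its $C^1$-closeness to the identity. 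Setting $T_\eps\varphi := \varphi \circ \Phi_\eps$ and taking $E_\eps$ to be the restriction to $\Omega$ of a Sobolev extension $W^{m,2}(\Omega_\eps) \to W^{m,2}(\R^N)$ whose operator norm is uniformly bounded by constants depending only on $M$ and $\mathcal{A}$, conditions (C1) and (C3)(ii)--(iii) follow essentially verbatim from the argument in the proof of Theorem~\ref{lemma: 6.2 improved} (one-dimensional Sobolev trace on vertical lines and Lipschitz regularity of $\partial\Omega$). Verification of (C2) and (C3)(i) reduces, via Leibniz expansion, to the local estimates \eqref{eq: ineq decay general} and \eqref{eq: estimate p}; the main obstacle I anticipate is precisely this bookkeeping of cross-terms produced when derivatives hit $\psi_j\cdot(\Phi_\eps^{(j)}-\mathrm{id})$, which works because each cross-term carries the local decay rate $o(\kappa_\eps^{|\beta|-k-1/2})$ coming from $p^\alpha_{m,\beta}(\Phi_\eps^{(j)})$ multiplied by an $\eps$-independent derivative of $\psi_j$. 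Once condition (C) is established, the $\mathcal{E}$-compact convergence of the resolvents is immediate from Theorem~\ref{arlathm}.
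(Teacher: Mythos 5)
Your proposal is correct and follows essentially the same route as the paper: a localization via partition of unity to each boundary chart, followed by an application of Theorem \ref{lemma: 6.2 improved} to the profile functions $g_{j,\eps}$, $g_j$ with a power of $\delta_\eps=d^{(m-k)}_{\mathcal A}(\Omega_\eps,\Omega)$ as the scale $\kappa_\eps$ (the paper takes $\kappa_\eps=\delta_\eps^{1/m}$, which lies in your admissible range $\theta<2/(2m-2k+1)$). Your write-up is in fact more explicit than the paper's, which compresses the gluing step into the phrase ``standard partition of unity argument.''
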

\begin{proof} By using a standard partition of unity  argument, it suffices to prove that the assumptions of Theorem \ref{lemma: 6.2 improved} are satisfied by all the profile functions $g_{j,\epsilon }$, $g_{j}$ describing the boundaries of $\Omega_{\epsilon}, \Omega$, respectively, and this follows by choosing $\kappa_{\eps}=(d_{\mathcal{A}}^{(m-k)}(\Omega_{\eps}, \Omega ))^{\frac{1}{m}}$.
\end{proof}

In order to prove that the assumptions of Lemma \ref{lemma: 6.2 improved} are sharp, we now consider a the following geometric setting:\\[0.1cm] 
\textbf{(G2)} Let $\alpha \in \R$, $\alpha > 0$. Let $b \in C^{\infty}(\overline{W})$ a positive, non-constant periodic function, with periodicity cell given by $Y= ]-1/2, 1/2[^{N-1}$. Let us set
\begin{equation*}
g_\epsilon (\bar{x}) = \epsilon^\alpha b\bigg(\frac{\bar{x}}{\epsilon}\bigg), \quad\quad g(\bar{x}) = 0,
\end{equation*}
for all $\bar{x}\in W$. For simplicity, we set $g_0=g$ and for all $\epsilon \geq 0$ we consider the open sets
\begin{equation*}
\Omega_\epsilon   = \{(\bar{x}, x_N) \in \R^N : \bar{x} \in W, \, -1 < x_N < g_{\epsilon}(\bar{x}) \}
\end{equation*}
\vspace{0.1cm}

Then we have the following

\begin{theorem}
\label{thm: spectral stability polyharmonics}
Let $\Omega_\eps$, $\eps \geq 0$ be as in {\rm\textbf{(G2)}} and let $k \in \N$ satisfy $1 \leq k \leq m-1$. Let $V(\Omega_{\epsilon}) = W^{m,2}(\Omega_{\epsilon }) \cap W^{k,2}_0(\Omega_{\epsilon})$ for all $\epsilon \geq 0$. If $\alpha > m-k+\frac{1}{2}$, then $H^{-1}_{V(\Omega_\eps)}$  $\mathcal{E}$-compact converges to $  H^{-1}_{V(\Omega)}$ as $\eps \to 0$.
\end{theorem}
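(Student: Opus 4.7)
The plan is to reduce the statement to a direct application of the general stability result, Theorem \ref{lemma: 6.2 improved}. The geometric setting \textbf{(G2)} obviously fits into \textbf{(G1)} by choosing an enclosing cuboid $V = W \times (-1, \sigma)$ with $\sigma > \epsilon_0^\alpha \norma{b}_\infty$, and by taking $g = 0$ and $g_\epsilon(\bar x) = \epsilon^\alpha b(\bar x/\epsilon)$; the compatibility condition $\Omega \setminus V = \Omega_\epsilon \setminus V = \emptyset$ is then trivially satisfied. Since $b \in C^\infty(\overline W)$ is smooth and periodic, the chain rule yields the explicit pointwise bound
\[
\norma{D^\beta(g_\epsilon - g)}_\infty = \norma{D^\beta g_\epsilon}_\infty \leq C_\beta \, \epsilon^{\alpha - |\beta|}, \qquad 0 \leq |\beta| \leq m,
\]
with $C_\beta$ depending only on $b$ and $\beta$, and vanishing when $\beta$ has a non-zero $N$-th component.

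The main task is to produce a single gauge $\kappa_\epsilon \to 0$ satisfying simultaneously $(i)$ and $(ii)$ of Theorem \ref{lemma: 6.2 improved}. I would try the ansatz $\kappa_\epsilon = \epsilon^\gamma$ for one exponent $\gamma > 0$ to be chosen, which reduces condition $(ii)$ to the uniform algebraic requirement
\[
\alpha - |\beta| - \gamma\bigl(m - |\beta| - \tfrac{1}{2} - k + 1\bigr) > 0, \qquad 0 \leq |\beta| \leq m.
\]
The left-hand side is affine in $|\beta|$, so the constraint is tightest at the endpoints $|\beta| = 0$ and $|\beta| = m$, producing respectively the two conditions
\[
\gamma < \frac{\alpha}{m - k + 1/2} \qquad \text{and} \qquad \gamma > \frac{m - \alpha}{k - 1/2}.
\]

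The heart of the argument is therefore purely algebraic: I need to verify that this window for $\gamma$ is non-empty exactly under the hypothesis $\alpha > m - k + 1/2$. Since $k \geq 1$ and $k \leq m - 1$, both $k - 1/2$ and $m - k + 1/2$ are positive, and cross-multiplication gives
\[
\frac{m - \alpha}{k - 1/2} < \frac{\alpha}{m - k + 1/2} \iff m(m - k + 1/2) < \alpha\bigl((m - k + 1/2) + (k - 1/2)\bigr) = \alpha m,
\]
which is precisely $\alpha > m - k + 1/2$. For any admissible $\gamma$ the remaining condition $(i)$ is automatic: $\gamma < \alpha/(m-k+1/2) \leq \alpha$ (because $m - k + 1/2 \geq 3/2 > 1$), hence $\kappa_\epsilon = \epsilon^\gamma$ dominates $\norma{b}_\infty \epsilon^\alpha \geq \norma{g_\epsilon - g}_\infty$ for $\epsilon$ small, while $\kappa_\epsilon \to 0$ is obvious. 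Theorem \ref{lemma: 6.2 improved} then delivers the desired $\E$-compact convergence $H^{-1}_{V(\Omega_\epsilon)} \to H^{-1}_{V(\Omega)}$. No genuine obstacle is expected beyond this bookkeeping; the only subtle point is checking that the hypothesis yields a non-empty range for $\gamma$, which, as shown, is sharp in the sense that the lower and upper bounds on $\gamma$ coincide exactly when $\alpha = m - k + 1/2$.
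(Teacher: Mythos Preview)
Your proposal is correct and follows essentially the same route as the paper: both apply Theorem~\ref{lemma: 6.2 improved} with a power-law gauge $\kappa_\epsilon$ and reduce the verification of condition~$(ii)$ to the endpoints $|\beta|=0$ and $|\beta|=m$, arriving at the same pair of inequalities whose compatibility is exactly $\alpha>m-k+1/2$. The only cosmetic difference is that the paper parameterises the gauge as $\kappa_\epsilon=\epsilon^{\alpha\theta}\|b\|_\infty$ (so your $\gamma=\alpha\theta$) and invokes the Gagliardo--Nirenberg interpolation inequality to justify that checking the two extreme values of $|\beta|$ suffices, whereas you obtain this reduction more directly by noting that the exponent $\alpha-|\beta|-\gamma(m-|\beta|-k+1/2)$ is affine in $|\beta|$; for the specific profile $g_\epsilon(\bar x)=\epsilon^\alpha b(\bar x/\epsilon)$ your argument is slightly more elementary, while the interpolation route would apply to more general perturbations.
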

\begin{proof}
We aim at  applying Theorem \ref{lemma: 6.2 improved} with  $\kappa_\epsilon = \epsilon^{\alpha \theta}\norma{b}_{\infty} $, for some $\theta \in (0,1)$ to be specified. By the classical Gagliardo-Nirenberg interpolation inequality
$$
\norma{D^\beta f}_{\infty} \leq C (\sum_{|\alpha|=m} \norma{D^\alpha f}_{\infty})^{|\beta| /m}\norma{f}^{1-|\beta|/m}_{\infty},
$$
for all $f \in W^{m, \infty}(\Omega)$ (see e.g., \cite[p.125]{Nirenb}), in order to verify condition (iii) in Theorem \ref{lemma: 6.2 improved} it is sufficient to verify it for $|\beta|=0$ and $|\beta|=m$ (see also \cite[Proposition 6.17]{ArrLamb}). When $\abs{\beta} = 0$ we have
\[
\lim_{\epsilon \to 0}\frac{\norma{g_\epsilon - g}_\infty}{\kappa_\epsilon^{m-k+1/2}} = c \lim_{\epsilon \to 0} \frac{\epsilon^\alpha}{\epsilon^{\alpha\theta(m-k+1/2)}} =  c \lim_{\epsilon \to 0}  \epsilon^{\alpha (1-\theta(m-k-1/2)) },
\]
where $c$ is a constant depending only on $\norma{b}_{\infty}$. The right-hand side clearly tends to $0$ as soon as $\theta < \frac{1}{m-k+1/2}$.\\
When $\abs{\beta} = m$, we must check that
$
\lim_{\epsilon \to 0} \frac{D^\beta g_\epsilon}{\kappa_\epsilon^{-k + 1/2}} = 0.
$
Note that
\[
\left\| \frac{D^\beta g_\epsilon}{\kappa_\epsilon^{-k + 1/2}} \right\|_{\infty}= c \frac{\eps^{\alpha -m}}{\eps^{\alpha\theta(-k+1/2)}} = \eps^{\alpha(1-\theta(-k+1/2))-m},
\]
and the right hand side tends to zero if and only if
\begin{equation}
\label{proof: ineq alphatheta}
\alpha \Big( 1 + \theta \Big(k - \frac{1}{2}\Big) \Big) - m > 0.
\end{equation}
By letting $\theta \to \frac{1}{m-k+1/2}$ in \eqref{proof: ineq alphatheta} we obtain that inequality \eqref{proof: ineq alphatheta} is satisfied when $\alpha > m-k+1/2$, true by assumption. By Lemma \ref{lemma: 6.2 improved} we deduce the validity of Theorem \ref{thm: spectral stability polyharmonics}.
\end{proof}

\begin{remark} When $k = m-1$, Theorem \ref{thm: spectral stability polyharmonics} states that if $\alpha > \frac{3}{2}$, $H^{-1}_{V(\Omega_\eps)} \overset{\mathcal{C}}{\rightarrow} H^{-1}_{V(\Omega)}$ as $\eps \to 0$, independently on $m \geq 2$. Actually, it is possible to prove that $\alpha = 3/2$ in this case is the critical exponent, in the sense that when $\alpha \leq 3/2$  the operator   $H^{-1}_{V(\Omega_\eps)}$ does not converge to $H^{-1}_{V(\Omega)}$. We refer to Theorem \ref{thm: poly strong} for a complete discussion about the spectral convergence of $H_{V(\Omega_\eps)}$ depending on $\alpha$.
\end{remark}


\section{A polyharmonic Green formula}
\label{sec: poly green formula}
In this section we provide a formula which turns out to be useful in recognising the possible natural boundary conditions for polyharmonic operators of any order. Let us begin by stating an easy integration-by-parts formula.

\begin{proposition}
Let $\Omega$ be a bounded domain of class $C^{0,1}$ in $\R^N$. Let $m \in \N$ and let $f\in C^{m+1}(\overline{\Omega})$, $\varphi \in C^m(\overline{\Omega})$. Then
\begin{equation}
\label{byparts 1}
\begin{split}
\int_{\Omega} D^m f : D^m \varphi \, \diff{x} = - \int_{\Omega} D^{m-1}&(\Delta f) : D^{m-1} \varphi \, \diff{x} \\
&+ \int_{\partial \Omega} D^{m} f : (n \otimes D^{m-1}\varphi) \,\diff{S},
\end{split}
\end{equation}
where the symbol $:$ stands for the Frobenius product, $n$ is the unit outer  normal to $\partial \Omega$, and $\otimes$ is the tensor product, defined by $(n \otimes D^{m-1}\varphi)_{i,j_1, \cdots, j_{m-1}} = n_i \frac{\partial^{m-1}\varphi}{\partial x_{j_1} \cdots \partial x_{j_{m-1}}}$ for all $i,j_1, \cdots, j_{m-1} \in \{1, \cdots, N\}$.
\end{proposition}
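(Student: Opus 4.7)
The plan is to reduce the identity to a single application of the divergence theorem, performed componentwise in the tensor $D^m f : D^m\varphi$. Using that $D^m u$ is the full tensor of $m$-th partial derivatives, we write
\[
\int_\Omega D^m f : D^m\varphi\,dx \;=\; \sum_{j_1,\dots,j_m = 1}^N \int_\Omega \bigl(\partial_{j_1}\!\cdots\partial_{j_m}\! f\bigr)\bigl(\partial_{j_1}\!\cdots\partial_{j_m}\!\varphi\bigr)\,dx,
\]
and focus on transferring the derivative $\partial_{j_m}$ away from $\varphi$ onto $f$. Since $f\in C^{m+1}(\overline\Omega)$, the function $\partial_{j_1}\cdots\partial_{j_m} f$ is of class $C^1$; since $\varphi\in C^m(\overline\Omega)$, the quantity $\partial_{j_1}\cdots\partial_{j_{m-1}}\varphi$ is of class $C^1$. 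Thus their product is a $C^1$ vector field on a Lipschitz domain, and a single integration by parts is legitimate.

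Concretely, for each fixed tuple $(j_1,\dots,j_m)$, applying the divergence theorem to the field obtained by multiplying $\partial_{j_1}\cdots\partial_{j_m} f$ and $\partial_{j_1}\cdots\partial_{j_{m-1}}\varphi$ in the direction $e_{j_m}$ yields
\[
\int_\Omega \bigl(\partial_{j_1}\!\cdots\partial_{j_m}\!f\bigr)\bigl(\partial_{j_m}\partial_{j_1}\!\cdots\partial_{j_{m-1}}\!\varphi\bigr)\,dx
\;=\; -\int_\Omega \bigl(\partial_{j_m}^2\partial_{j_1}\!\cdots\partial_{j_{m-1}}\!f\bigr)\bigl(\partial_{j_1}\!\cdots\partial_{j_{m-1}}\!\varphi\bigr)\,dx + B_{j_m,j_1,\dots,j_{m-1}},
\]
where $B_{j_m,j_1,\dots,j_{m-1}}$ denotes the boundary contribution $\int_{\partial\Omega}\bigl(\partial_{j_1}\cdots\partial_{j_m} f\bigr)\bigl(\partial_{j_1}\cdots\partial_{j_{m-1}}\varphi\bigr) n_{j_m}\,dS$.

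The next step is to sum over the $m$ indices and recognise the two pieces. For the bulk term, summing over $j_m$ converts $\sum_{j_m}\partial_{j_m}^2 f$ into $\Delta f$, so the $(m-1)$-tensor $\sum_{j_m}\partial_{j_m}^2 \partial_{j_1}\cdots\partial_{j_{m-1}}f$ becomes $\partial_{j_1}\cdots\partial_{j_{m-1}}(\Delta f)$; summing the remaining $(j_1,\dots,j_{m-1})$ indices reconstructs $D^{m-1}(\Delta f) : D^{m-1}\varphi$. For the boundary term, relabelling $j_m\mapsto i$ and comparing with the definition $(n\otimes D^{m-1}\varphi)_{i,j_1,\dots,j_{m-1}} = n_i\,\partial_{j_1}\cdots\partial_{j_{m-1}}\varphi$ gives exactly $\int_{\partial\Omega} D^m f : (n\otimes D^{m-1}\varphi)\,dS$.

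There is no serious obstacle here: the argument is a bookkeeping exercise combining one integration by parts with the definition of the Frobenius contraction, and the hypotheses $f\in C^{m+1}(\overline\Omega)$, $\varphi\in C^m(\overline\Omega)$, $\partial\Omega\in C^{0,1}$ are precisely what is needed to apply the classical divergence theorem to the $C^1$ vector fields that appear. The only point requiring a moment's care is matching the index structure in the boundary term to the tensor product $n\otimes D^{m-1}\varphi$, which is handled by the relabelling above.
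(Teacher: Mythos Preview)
Your proof is correct and follows essentially the same approach as the paper: write out the Frobenius product in index form, integrate by parts once in one of the indices, and recognise the resulting bulk and boundary terms. The only cosmetic difference is that you move the derivative in the last index $j_m$ whereas the paper moves it in the first index $j_1$; by symmetry of the full summation this is immaterial.
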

\begin{proof}
The proof is a simple integration by parts.  Indeed, dropping the summation symbols we get
\begin{small}
\[
\begin{split}
&\int_{\Omega}D^{m} f : D^{m}\varphi \, \diff{x} = \int_{\Omega} \frac{\partial^{m}f}{\partial x_{j_1} \cdots \partial x_{j_{m}} } \frac{\partial^{m}\varphi}{\partial x_{j_1} \cdots \partial x_{j_{m}} } \, \diff{x}\\
&= - \int_{\Omega} \frac{\partial^{m+1}f}{\partial x_{j_1}^2 \cdots \partial x_{j_{m}} } \frac{\partial^{m-1}\varphi}{\partial x_{j_2} \cdots \partial x_{j_{m}} } \, \diff{x} + \int_{\partial \Omega} \frac{\partial^{m}f}{\partial x_{j_1} \cdots \partial x_{j_{m}} } \frac{\partial^{m-1}\varphi}{\partial x_{j_2} \cdots \partial x_{j_{m}} } n_{j_1} \, \diff{S} \\
&= - \int_{\Omega} D^{m-1} (\Delta f) : D^{m-1} \varphi \, \diff{x} + \int_{\partial \Omega} (D^{m} f) : ( n \otimes D^{m-1}\varphi) \,\diff{S}.
\end{split}
\]\qedhere
\end{small} 
\end{proof}

By applying $m$ times the integration by parts argument used in the proof of formula~\eqref{byparts 1}, we deduce the validity of the following

\begin{corollary}
Let $m \in \N$. Let $f\in C^{2m}(\overline{\Omega})$, $\varphi \in C^m(\overline{\Omega})$.
\begin{multline}
\label{byparts 2}
\int_{\Omega} D^m f : D^m \varphi \, \diff{x} = (-1)^m \int_{\Omega} \Delta^m f  \varphi \, \diff{x}\\
 + \sum_{k=0}^{m-1} (-1)^k \int_{\partial \Omega} (D^{m-k}(\Delta^k f)) : (n \otimes D^{m-k-1}\varphi) \,\diff{S}.
\end{multline}
\end{corollary}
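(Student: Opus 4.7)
The plan is to prove the Corollary by iterating formula \eqref{byparts 1} exactly $m$ times. Each application of the Proposition reduces the order of differentiation on both sides by one, at the cost of applying one additional Laplacian to $f$ and producing one boundary term with the appropriate sign.

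To set this up, for $k=0,1,\dots,m$ define
\[
I_k = \int_{\Omega} D^{m-k}(\Delta^k f) : D^{m-k}\varphi \,\diff{x}, \qquad I_m = \int_{\Omega} (\Delta^m f)\, \varphi \,\diff{x},
\]
and for $k=0,1,\dots,m-1$ define the boundary integrals
\[
B_k = \int_{\partial \Omega} D^{m-k}(\Delta^k f) : (n \otimes D^{m-k-1}\varphi)\, \diff{S}.
\]
I would first verify that the Proposition can indeed be invoked at each step: at the $k$-th step we apply \eqref{byparts 1} with the role of "$m$" played by $m-k$ and the role of "$f$" played by $\Delta^k f$. This requires $\Delta^k f \in C^{m-k+1}(\overline{\Omega})$ and $\varphi \in C^{m-k}(\overline{\Omega})$. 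Since $f\in C^{2m}(\overline{\Omega})$ we have $\Delta^k f \in C^{2m-2k}(\overline{\Omega})$, and the inequality $2m-2k \geq m-k+1$ is equivalent to $k\leq m-1$, so the hypothesis holds for every step $0\leq k \leq m-1$; the regularity of $\varphi$ is trivially enough.

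The main computation is then the telescoping. Applying \eqref{byparts 1} at step $k$ yields the recursion
\[
I_k = -I_{k+1} + B_k, \qquad k=0,1,\dots,m-1.
\]
Iterating this identity and bookkeeping the signs gives
\[
I_0 = \sum_{k=0}^{m-1}(-1)^k B_k + (-1)^m I_m,
\]
which is exactly the statement of the Corollary once one observes that $I_0$ coincides with the left-hand side and $(-1)^m I_m$ coincides with the volume integral on the right-hand side. I would present this either as a direct finite telescoping or, equivalently, by induction on $m$ (the base case $m=1$ being the standard Green identity, and the inductive step being precisely one application of \eqref{byparts 1}).

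There is no serious obstacle in this argument: the Proposition does all the real work, and the remaining content is essentially sign bookkeeping and checking that the regularity $f\in C^{2m}(\overline{\Omega})$ is enough to keep applying \eqref{byparts 1}. The only minor point worth emphasizing in the write-up is that the hypothesis $f\in C^{2m}(\overline{\Omega})$ (rather than the weaker $f\in C^{m+1}(\overline{\Omega})$ used in the Proposition) is exactly what is needed so that $\Delta^{m-1} f$ still has the two classical derivatives required at the final step.
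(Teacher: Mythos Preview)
Your proposal is correct and follows exactly the approach indicated in the paper, which simply states that the corollary follows by applying formula~\eqref{byparts 1} $m$ times. Your write-up is in fact more detailed than the paper's, since you spell out the recursion $I_k=-I_{k+1}+B_k$, the telescoping, and the regularity check ensuring that the Proposition applies at each step.
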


\medskip


\begin{theorem}[Polyharmonic Green Formula - Flat case]
\label{PGF: flat case}
Let $H$ be the half-space $H = \{(\bar{x}, x_N) \in \R^N : x_N < 0 \}. $
Let $m \in \N$. Let $f \in C^{2m}(\overline{H})$, $\varphi \in C^m(\overline{H})$ with compact support in $ \overline{H}$. Then,
\begin{equation}
\label{polyharmonic Green formula}
\int_H D^m f : D^m \varphi \, \diff{x} = (-1)^m \int_{H} \Delta^m f  \varphi \, \diff{x} + \sum_{t=0}^{m-1} \int_{\R^{N-1}} B_t(f) \frac{\partial^t\varphi}{\partial x_N^t} \diff{\bar{x}},
\end{equation}
where $B_t : C^{2m}(\partial H) \to C^{t+1}(\partial H)$ is defined by
\begin{equation}
\begin{split}
B_t(f) &= \sum_{l=t}^{m-1} (-1)^{m-t-1} \binom{l}{t} \Delta_{N-1}^{l-t} \Bigg(\frac{\partial^{t+1}}{\partial x_N^{t+1}}(\Delta^{m-l-1}f) \Bigg),\\
\end{split}
\end{equation}
and $\Delta_{N-1}$ is the Laplace operator in the first $N-1$ variables.
\end{theorem}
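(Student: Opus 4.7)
The plan is to start from the identity \eqref{byparts 2} applied with $\Omega = H$, and then systematically rewrite the boundary sum $\sum_{k=0}^{m-1} (-1)^k \int_{\partial H} D^{m-k}(\Delta^k f) : (n \otimes D^{m-k-1}\varphi)\,dS$ into the claimed form. First I would exploit that on $\partial H = \R^{N-1}$ the outward unit normal is $n = e_N$, so that the tensor $n \otimes D^{m-k-1}\varphi$ has the single index $i = N$ alive. Expanding the Frobenius product, each boundary integrand reduces to
\begin{equation*}
\sum_{j_1,\dots,j_{m-k-1}=1}^{N} \frac{\partial^{m-k}(\Delta^k f)}{\partial x_N\,\partial x_{j_1}\cdots\partial x_{j_{m-k-1}}}\cdot \frac{\partial^{m-k-1}\varphi}{\partial x_{j_1}\cdots\partial x_{j_{m-k-1}}}.
\end{equation*}

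Next I would split each sequence $(j_1,\dots,j_{m-k-1})$ according to how many of its entries equal $N$; if that number is $s$, the remaining $m-k-1-s$ indices run over $\{1,\dots,N-1\}$, and the number of such sequences is $\binom{m-k-1}{s}$. Writing $\bar D$ for the gradient in the first $N-1$ variables, the boundary integrand becomes
\begin{equation*}
\sum_{s=0}^{m-k-1}\binom{m-k-1}{s}\, \bar D^{m-k-1-s}\!\Bigl(\tfrac{\partial^{s+1}}{\partial x_N^{s+1}}\Delta^k f\Bigr) : \bar D^{m-k-1-s}\!\Bigl(\tfrac{\partial^s \varphi}{\partial x_N^s}\Bigr).
\end{equation*}
Since $\varphi$ has compact support in $\overline{H}$, each trace $\partial_N^s\varphi|_{\partial H}$ has compact support in $\R^{N-1}$, so integration by parts in the tangential directions produces no lateral boundary terms. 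Iterating the elementary identity $\int_{\R^{N-1}}\bar D^p u : \bar D^p v\,d\bar x = (-1)^p \int_{\R^{N-1}}\bar\Delta^p u \cdot v\,d\bar x$ with $p = m-k-1-s$ turns every such inner Frobenius product into a scalar pairing, yielding a sign $(-1)^{m-k-1-s}$.

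Combining these manipulations with the outer factor $(-1)^k$ from \eqref{byparts 2}, the boundary sum becomes
\begin{equation*}
\sum_{k=0}^{m-1}\sum_{s=0}^{m-k-1} (-1)^{m-s-1}\binom{m-k-1}{s}\int_{\R^{N-1}} \bar\Delta^{m-k-1-s}\!\Bigl(\tfrac{\partial^{s+1}}{\partial x_N^{s+1}}\Delta^k f\Bigr)\, \tfrac{\partial^s \varphi}{\partial x_N^s}\, d\bar x.
\end{equation*}
I would then perform the index change $t = s$ and $l = m-k-1$ (so $k = m-l-1$); as $(k,s)$ ranges over the triangle $0\le s\le m-k-1 \le m-1$, the pair $(t,l)$ ranges over $0\le t\le l\le m-1$. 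Rewriting $\binom{m-k-1}{s}=\binom{l}{t}$, $\bar\Delta^{m-k-1-s}=\bar\Delta^{l-t}$, and $\Delta^k=\Delta^{m-l-1}$, the summand becomes exactly $(-1)^{m-t-1}\binom{l}{t}\bar\Delta^{l-t}\bigl(\partial_N^{t+1}\Delta^{m-l-1}f\bigr)\,\partial_N^t\varphi$, whose inner sum over $l\in\{t,\dots,m-1\}$ is precisely $B_t(f)$.

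The calculation is almost entirely bookkeeping; the main obstacle is making sure the combinatorics in the index swap and the signs line up, particularly the interplay between the $(-1)^k$ coming from \eqref{byparts 2} and the $(-1)^{m-k-1-s}$ generated by the tangential integrations by parts. Once the substitution $l = m-k-1$ is carried out carefully, the grouping by the power $t$ of $\partial_N$ acting on $\varphi$ produces exactly the operators $B_t(f)$ stated in the theorem, and the identity \eqref{polyharmonic Green formula} follows.
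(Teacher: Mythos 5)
Your proposal is correct and follows essentially the same route as the paper: apply \eqref{byparts 2}, use $n=e_N$ to reduce the boundary terms to the identity \eqref{chain} (your binomial split over the number $s$ of normal indices), integrate by parts tangentially with no lateral boundary terms thanks to the compact support of $\varphi$, and re-index via $l=m-k-1$, $t=s$ to assemble $B_t(f)$. The sign bookkeeping $(-1)^k\cdot(-1)^{m-k-1-s}=(-1)^{m-t-1}$ matches the paper's computation exactly.
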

\begin{proof}
Let $r = m-k-1$. First note that  we can write
\begin{small}
\begin{multline}
\label{chain}
\int_{\R^{N-1}} \bigg(D^{r}\bigg(\Delta^k\bigg( \frac{\partial f}{\partial x_N}\bigg)\bigg)\bigg) : D^{r}\varphi \,\diff{\bar{x}}\\
= \sum_{t=0}^{r} \binom{r}{t} \int_{\R^{N-1}} \bigg(D_{\bar{x}}^{r-t} \bigg(\Delta^k \bigg(\frac{\partial^{t+1}f}{\partial x^{t+1}_N} \bigg)\bigg)\bigg) : \bigg(D_{\bar{x}}^{r-t} \bigg( \frac{\partial^t \varphi}{\partial x^t_N} \bigg)\bigg) \, d\bar{x}.
\end{multline}
\end{small}
Then, by using \eqref{chain} in the last integral in the right-hand side of~\eqref{byparts 2} we get the following as boundary term
\begin{equation}
\label{integral 1}
\sum_{k=0}^{m-1} (-1)^k\sum_{t= 0}^{r} \binom{r}{t} \int_{\R^{N-1}} D_{\bar{x}}^{r-t}\Bigg(\frac{\partial^{t+1}(\Delta^k f)}{\partial x_N^{t+1}}\Bigg) : D_{\bar{x}}^{r-t}\Bigg(\frac{\partial^t\varphi}{\partial x_N^t}\Bigg)\, \diff{\bar{x}}.
\end{equation}
By dropping the summation symbols, the integrand in  \eqref{integral 1} becomes
\begin{equation}
\label{integral 2}
\int_{\R^{N-1}} \frac{\partial^{r-t}}{\partial x_{i_1} \cdots \partial x_{i_{r-t}}} \Bigg(\frac{\partial^{t+1}(\Delta^k f)}{\partial x_N^{t+1}}\Bigg)  \frac{\partial^{r-t}}{\partial x_{i_1} \cdots \partial x_{i_{r-t}}} \Bigg(\frac{\partial^t\varphi}{\partial x_N^t}\Bigg)\, \diff{\bar{x}},
\end{equation}
where the indexes $i_j$ run on the first $N-1$ coordinates. By integrating by parts $r-t$ times in $i_1,\dots,i_{r-t}$ in \eqref{integral 2} we deduce that \eqref{integral 1} equals
\[
\sum_{k=0}^{m-1} (-1)^{m-t-1} \sum_{t=0}^{r} \binom{r}{t} \int_{\R^{N-1}} \frac{\partial^{2(r-t)}}{\partial^2x_{i_1} \cdots \partial^2x_{i_{r-t}}} \Bigg(\frac{\partial^{t+1}(\Delta^k f)}{\partial x_N^{t+1}}\Bigg)\, \frac{\partial^t\varphi}{\partial x_N^t}\, \diff{\bar{x}},
\]
where we have no other boundary terms because $\varphi$ has compact support. We rewrite the last expression as
\begin{equation}
\label{integral 3}
\sum_{k=0}^{m-1} (-1)^{m-t-1}\sum_{t= 0}^{r} \binom{r}{t} \int_{\R^{N-1}}  \Delta_{N-1}^{r-t}\Bigg(\frac{\partial^{t+1}(\Delta^k f)}{\partial x_N^{t+1}}\Bigg)\, \frac{\partial^t\varphi}{\partial x_N^t} \diff{\bar{x}}.
\end{equation}
We now apply the change of summation index $r=m-k-1$ in the first sum of \eqref{integral 3}. We deduce that \eqref{integral 3} equals
\begin{equation}
\label{integral 4}
\sum_{r=0}^{m-1} (-1)^{m-t-1} \sum_{t=0}^r \binom{r}{t} \int_{\R^{N-1}} \Delta_{N-1}^{r-t}\Bigg(\frac{\partial^{t+1}(\Delta^{m-r-1} f)}{\partial x_N^{t+1}}\Bigg)\, \frac{\partial^t\varphi}{\partial x_N^t} \diff{\bar{x}}.
\end{equation}
By exchanging the two sums in \eqref{integral 4} we get \eqref{polyharmonic Green formula}.
\end{proof}

\begin{remark} If $m=2$, then \eqref{polyharmonic Green formula} reads
\begin{multline*}
\int_H D^2 f : D^2\varphi \, \diff{x} =  \int_H \Delta^2f \varphi\,\diff{x} + \int_{\R^{N-1}} \frac{\partial^{2}f}{\partial x_N^{2}} \frac{\partial \varphi}{\partial x_N}\, \diff{\bar{x}} \\
-\!\! \int_{\R^{N-1}} \Bigg( \Delta_{N-1}\Bigg( \frac{\partial f}{\partial x_N}\Bigg) + \Delta\Bigg( \frac{\partial f}{\partial x_N}\Bigg) \Bigg) \varphi\,\diff{\bar{x}},
\end{multline*}
which is consistent with the formula provided in \cite[Lemma 8.56]{ArrLamb}. Indeed, if the domain is a hyperplane, the boundary integral
$
\int_{\partial H} ( \Div_{\partial H} (D^2f \cdot n)_{\partial \Omega})\, \varphi\,\diff{S}
$
appearing in \cite[Lemma 8.56]{ArrLamb} coincides with
$
\int_{\R^{N-1}} \Delta_{N-1}( \frac{\partial f}{\partial x_N} )\, \varphi \,\diff{\bar{x}}.
$
\end{remark}

\begin{theorem}\label{polygreenthm}
Let $\Omega$ be a bounded domain of $\R^N$ of class $C^{0,1}$, $m \in \N$, $m \geq 2$. Let $f \in W^{2m,2}(\Omega) \cap W^{m-1,2}_0(\Omega)$ and $\varphi \in W^{m,2}(\Omega) \cap W^{m-1,2}_0(\Omega)$. Then
\begin{equation}
\label{polyharmonic Green strong BC}
\int_{\Omega} D^m f : D^m \varphi\, dx = (-1)^m \int_{\Omega} \Delta^m f \varphi dx + \int_{\partial \Omega} \frac{\partial^m f}{\partial n^m} \frac{\partial^{m-1} \varphi}{\partial n^{m-1}}\, dS.
\end{equation}
\end{theorem}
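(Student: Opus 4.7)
The plan is to derive \eqref{polyharmonic Green strong BC} as a specialisation of Corollary 1, formula \eqref{byparts 2}. The boundary conditions encoded in $f,\varphi\in W^{m-1,2}_0(\Omega)$ will kill all but one of the boundary summands in \eqref{byparts 2}, and the surviving summand will then reduce to a pure normal contribution.

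\textbf{Step 1 (density).} Formula \eqref{byparts 2} is stated for $f\in C^{2m}(\overline{\Omega})$ and $\varphi\in C^m(\overline{\Omega})$, but both sides depend continuously on $(f,\varphi)$ in the $W^{2m,2}(\Omega)\times W^{m,2}(\Omega)$ topology, because the relevant trace maps $W^{2m,2}(\Omega)\to W^{2m-1/2,2}(\partial\Omega)$ and $W^{m,2}(\Omega)\to W^{m-1/2,2}(\partial\Omega)$ are continuous on the Lipschitz set $\Omega$. Since $C^\infty(\overline{\Omega})$ is dense in both Sobolev spaces, \eqref{byparts 2} extends to our Sobolev regularity.

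\textbf{Step 2 (killing the summands with $k\geq 1$).} In the $k$-th boundary summand of \eqref{byparts 2} the factor $D^{m-k-1}\varphi$ is evaluated on $\partial\Omega$. Since $\varphi\in W^{m-1,2}_0(\Omega)$, every derivative $D^\beta\varphi$ with $|\beta|\leq m-2$ has zero trace on $\partial\Omega$. Hence every summand with $k\geq 1$ vanishes, leaving only
$$
\int_{\partial\Omega}(D^mf):(n\otimes D^{m-1}\varphi)\,dS.
$$

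\textbf{Step 3 (reduction to pure normal part).} The vanishing of the traces of $D^\beta\varphi$ for all $|\beta|\leq m-2$ implies the vanishing of all their tangential derivatives on $\partial\Omega$. In local coordinates flattening $\partial\Omega$ to $\{x_N=0\}$, this forces $D^\alpha\varphi(\bar x,0)=0$ for every multi-index $\alpha$ of length $m-1$ other than $(0,\ldots,0,m-1)$; equivalently, as a symmetric tensor on $\partial\Omega$,
$$
D^{m-1}\varphi\big|_{\partial\Omega}=n^{\otimes(m-1)}\,\frac{\partial^{m-1}\varphi}{\partial n^{m-1}}.
$$
Substituting into the surviving integral and using $(D^mf):n^{\otimes m}=\partial^mf/\partial n^m$ yields exactly the right-hand side of \eqref{polyharmonic Green strong BC}.

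\textbf{Main obstacle.} The delicate point is Step 3: the identification of $D^{m-1}\varphi|_{\partial\Omega}$ as purely normal is transparent in smooth flattened charts but requires care on a merely $C^{0,1}$ boundary, where high-order derivatives do not transform cleanly. My plan is to first prove \eqref{polyharmonic Green strong BC} for $\Omega$ of class $C^m$, using local flattening together with Faà di Bruno's formula \eqref{eq: Faa di Bruno mdim} to express the change of variables of order $m-1$ derivatives, and then to pass to Lipschitz $\Omega$ either by exhausting it from inside by $C^m$ subdomains and using the continuity noted in Step 1, or, when only $f\in W^{2m,2}$ and $\varphi\in W^{m,2}$ are available, by approximating $\varphi$ in $W^{m-1,2}_0(\Omega)$ by $C_c^\infty(\Omega)$ functions and $f$ by $C^{2m}(\overline{\Omega})$ functions and passing to the limit in both sides.
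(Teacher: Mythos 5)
Your proposal is correct and follows essentially the same route as the paper: apply \eqref{byparts 2}, observe that the vanishing traces of $D^\beta\varphi$ for $|\beta|\le m-2$ kill every boundary summand except $k=0$, and then identify $D^{m-1}\varphi|_{\partial\Omega}$ as the purely normal tensor $\frac{\partial^{m-1}\varphi}{\partial n^{m-1}}\bigotimes_{i=1}^{m-1}n$ before contracting with $D^m f$. The only real difference is in how that last identity is established: where you propose local flattening plus Fa\`a di Bruno (and flag it as the main obstacle), the paper proves it by a short induction on $m$ using the decomposition $\nabla|_{\partial\Omega}=\nabla_{\partial\Omega}+n\,\frac{\partial}{\partial n}$ applied to $D^{m-2}\varphi|_{\partial\Omega}=0$, which avoids charts entirely.
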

\begin{proof}
By \eqref{byparts 2} it is easy to see that
\begin{small}
\begin{equation}
\label{rmk: easy green}
\int_{\Omega} D^m f : D^m \varphi\, dx = (-1)^m \int_{\Omega} \Delta^m f \varphi\, dx + \int_{\partial \Omega} D^m f : (n \otimes D^{m-1} \varphi) \, dS,
\end{equation}
\end{small}
for all $\varphi \in W^{m,2}(\Omega) \cap W^{m-1,2}_0(\Omega)$, since $D^l \varphi = 0$ on $\partial \Omega $ for all $l \leq m-2$. We note that
$
D^m f : (n \otimes D^{m-1} \varphi) = (n^T D^m f) : D^{m-1} \varphi.
$
Moreover we claim that $D^{m-1} \varphi  = \frac{\partial^{m-1} \varphi}{\partial n^{m-1}} \bigotimes^{m-1}_{i=1} n$ on $\partial \Omega$ and we prove it by induction.
If $m=2$ the claim is a direct consequence of the gradient decomposition $\nabla|_{\partial \Omega} = \nabla_{\partial \Omega} + \frac{\partial}{\partial n} n$. Now we assume that $m > 2$ and that the claim holds for $m-1$. Then, by using the fact that
$
D^{m-2} \varphi |_{\partial \Omega} = 0,
$
for all $\varphi \in W^{m,2}(\Omega) \cap W^{m-1,2}_0(\Omega)$, we get
\[
\begin{split}
D^{m-1} \varphi |_{\partial \Omega} = D ( D^{m-2} \varphi)|_{\partial \Omega} &= \bigg(D\bigg(\frac{\partial^{m-2} \varphi}{\partial n^{m-2}} \bigotimes^{m-2}_{i=1} n \bigg)n\bigg) \otimes n = \frac{\partial^{m-1} \varphi}{\partial n^{m-1}} \bigotimes^{m-1}_{i=1} n,
\end{split}
\]
for all $\varphi \in W^{m,2}(\Omega) \cap W^{m-1,2}_0(\Omega)$. This proves the claim. Then we can rewrite \eqref{rmk: easy green} as
\begin{multline}
\int_{\Omega} D^m f : D^m \varphi \, dx = (-1)^m \int_{\Omega} \Delta^m f \varphi \, dx + \int_{\partial \Omega} \frac{\partial^{m-1} \varphi}{\partial n^{m-1}}(n^T D^m f) : \bigg( \bigotimes^{m-1}_{i=1} n\bigg) \, dS,
\end{multline}
and since $(n^T D^m f) : \Big( \bigotimes^{m-1}_{i=1} n\Big) = D^m f : \Big( \bigotimes^{m}_{i=1} n \Big )= \frac{\partial^m f}{\partial n^m}$ we deduce \eqref{polyharmonic Green strong BC}.
\end{proof}


\section[Strong intermediate boundary conditions]{Polyharmonic operators with strong intermediate boundary conditions}
\label{sec: polyharmonic operators}
Let $\Omega_\eps$, $\eps \geq 0$ be as in \textbf{(G2)}. Consider the polyharmonic operators $(-\Delta)^m + \mathbb{I}$ subject to strong intermediate boundary conditions, corresponding to the energy space $V(\Omega_\eps) := W^{m,2}(\Omega_\eps) \cap W_0^{m-1,2}(\Omega_\eps)$. More precisely, let $H_{\Omega_\epsilon,S}$ be the non-negative self-adjoint operator such that
\begin{equation}
\label{polyharmonic H}
(H_{\Omega_\epsilon,S}u, v)_{L^2(\Omega_\eps)} = (H^{1/2}_{\Omega_\epsilon,S}u,\, H^{1/2}_{\Omega_\epsilon,S}v)_{L^2(\Omega_\eps)} = Q_{\Omega_\eps} (u,v),
\end{equation}
for all functions $u,v \in W^{m,2}(\Omega_\eps) \cap W_0^{m-1,2}(\Omega_\eps)$, where
$
Q_{\Omega_\eps} (u,v) := \int_{\Omega_\epsilon} D^m\! u : D^m\! v + uv\, dx,
$
is the quadratic form canonically associated with $H_{\Omega_\epsilon,S}$. As it is explained in Section 2  the equation
$
H_{\Omega_\epsilon,S}u=f
$
with  datum $f\in L^2(\Omega_{\epsilon})$, corresponds exactly to the weak Poisson problem \eqref{intro: mainweak}.

Let $H_{\Omega, D}$ be  the polyharmonic operator satisfying strong intermediate boundary conditions on $\partial \Omega \setminus \overline{W}$ and Dirichlet boundary conditions on $W$, whose associated boundary value problem reads
\begin{equation}
\label{eq: Dirichlet BC poly}
\begin{cases}
(-\Delta)^m u + u = f, &\textup{in $\Omega_\eps$,}\\
\frac{\partial^l u}{\partial n^l} = 0, &\textup{on $W$, for all $0 \leq l \leq m-1$,}\\
\frac{\partial^l u}{\partial n^l} = 0, &\textup{on $\partial \Omega_\eps\setminus \overline{W}$, for all $0 \leq l \leq m-2$,}\\
\frac{\partial^m u}{\partial n^m} = 0, &\textup{on $\partial \Omega_\eps \setminus \overline{W}$.}
\end{cases}
\end{equation}
Note that we are identifying  $W$ with $W \times \{0\}$. Then the following theorem holds.

\begin{theorem}
\label{thm: poly strong}
Let $m \in \N$, $m\geq 2$, $\Omega_\eps$ as in {\rm \textbf{(G2)}}, $H_{\Omega_\eps}$ as in \eqref{polyharmonic H}, for all $\eps > 0$. Then the following statements hold true.
\begin{enumerate}[label=(\roman*)]
\item \textup{[Spectral stability]} If $\alpha > 3/2$, then $H^{-1}_{\Omega_\epsilon,S} \overset{\mathcal{C}}{\rightarrow} H^{-1}_{\Omega, S}$ as $\eps \to 0$.
\item \textup{[Instability]} If $\alpha < 3/2$, then $H^{-1}_{\Omega_\epsilon,S} \overset{\mathcal{C}}{\rightarrow} H^{-1}_{\Omega, D}$ as $\eps \to 0$, where $H_{\Omega, D}$ is defined in \eqref{eq: Dirichlet BC poly}.
\item \textup{[Strange term]} If $\alpha = 3/2$, then $H^{-1}_{\Omega_\epsilon,I} \overset{\mathcal{C}}{\rightarrow} \hat{H}^{-1}_{\Omega}$ as $\eps \to 0$, where $\hat{H}_\Omega$ is the operator $(-\Delta)^m + \mathbb{I}$ with strong intermediate boundary conditions on $\partial \Omega \setminus \overline{W}$ and the following boundary conditions on $W$: $D^l u=0$, for all $l \leq m-2$, $\partial^m_{x_N} u + K \partial_{x_N}^{m-1} u = 0$, where the factor $K$ is given by
{\[\begin{split}
    K = \int_{Y\times (-\infty,0)} |D^m V|^2\,\diff{y} = - \int_{Y} \Bigg(\frac{\partial^{m-1} (\Delta V)}{\partial x_N^{m-1}} + (m-1) \Delta_{N-1} \bigg( \frac{\partial^{m-1} V}{\partial x_N^{m-1}} \bigg)   \Bigg)  b(\bar{y}) \diff{\bar{y}},
\end{split}\]}and the function $V$ is $Y$-periodic in the variable $\bar{y}$ and satisfies the following microscopic problem
    \[
    \begin{cases}
    (-\Delta)^m V = 0, &\textup{in $Y \times (-\infty, 0)$},\\
    \frac{\partial^l V}{\partial n^l}(\bar{y}, 0) = 0, &\textup{on $Y$, for all $0 \leq l \leq m-3$},\\
    \frac{\partial^{m-2} V}{\partial y^{m-2}_N}(\bar{y}, 0) = b(\bar{y}), &\textup{on $Y$},\\
    \frac{\partial^m V}{\partial y_N^m}(\bar{y}, 0) = 0, &\textup{on $Y$}.
    \end{cases}
    \]
\end{enumerate}
\end{theorem}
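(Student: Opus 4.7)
My plan is to handle the three regimes of the trichotomy separately, invoking the framework of condition (C) (Definition~\ref{def: cond C}) and Theorem~\ref{arlathm} in each case, with the limiting space and bilinear form adapted to the regime.

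For (i), I would simply invoke Theorem~\ref{thm: spectral stability polyharmonics} with $k=m-1$: the stability threshold $m-k+\tfrac{1}{2}$ equals $3/2$ precisely, and the hypothesis $\alpha>3/2$ is exactly what is needed. Condition (C) holds with $V(\Omega)=W^{m,2}(\Omega)\cap W_0^{m-1,2}(\Omega)$ and the result is immediate.

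For (ii), I would verify condition (C) with the limiting space $V_D(\Omega)$ consisting of those $u\in W^{m,2}(\Omega)\cap W_0^{m-1,2}(\Omega)$ with $D^l u|_W=0$ for all $0\leq l\leq m-1$; this corresponds to $H_{\Omega,D}$. The core step is (C3)(iii): if $v_\eps\in V(\Omega_\eps)$ has uniformly bounded $Q_{\Omega_\eps}$-energy and $E_\eps v_\eps\to v$ in $L^2(\Omega)$, I must show $v\in V_D(\Omega)$. Since $v_\eps\in W_0^{m-1,2}(\Omega_\eps)$ vanishes together with its derivatives up to order $m-2$ on the upper oscillating boundary, a vertical Taylor expansion from $x_N=g_\eps(\bar x)$ (as used in Step~1 of Theorem~\ref{lemma: 6.2 improved}) yields pointwise decay for $D^l v_\eps$, $l\leq m-1$, of order $\eps^{\alpha\,(m-l-1/2)}$ in a vertical neighborhood of $W$; passing to the limit in the trace on $W$ and using $\alpha<3/2$ forces the corresponding traces of $v$ to vanish. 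Conditions (C1)--(C2) follow from a modification of the diffeomorphism $\Phi_\eps$ of Theorem~\ref{lemma: 6.2 improved}: since $\varphi\in V_D(\Omega)$ already vanishes to order $m-1$ on $W$, the decay estimates~\eqref{eq: ineq decay general} are considerably stronger and the $T_\eps\varphi=\varphi\circ\Phi_\eps$ construction goes through verbatim. This is the degeneration mechanism from \cite{CasDiazLuLaySuGrau,ArrLamb}.

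For (iii), I would again verify condition (C), now with limiting space $V_K(\Omega)=\{u\in W^{m,2}(\Omega)\cap W_0^{m-1,2}(\Omega):D^l u|_W=0,\ 0\leq l\leq m-2\}$ and limiting quadratic form
\[
\widehat Q_\Omega(u,v)=Q_\Omega(u,v)+K\int_W\frac{\partial^{m-1}u}{\partial n^{m-1}}\frac{\partial^{m-1}v}{\partial n^{m-1}}\,dS.
\]
The construction of $T_\eps$ must embed a boundary-layer corrector. For $\varphi\in V_K(\Omega)$, I would set
\[
T_\eps\varphi(\bar x,x_N)=\varphi(\bar x,x_N)+\eps^{m-1/2}\,\chi(x_N)\,\frac{\partial^{m-1}\varphi}{\partial x_N^{m-1}}(\bar x,0)\,V\!\left(\frac{\bar x}{\eps},\frac{x_N}{\eps}\right),
\]
where $V$ is the corrector from the microscopic problem and $\chi$ a smooth cut-off localising near $W$. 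The exponent $m-1/2$ and the boundary prescription $\partial_{y_N}^{m-2}V(\bar y,0)=b(\bar y)$ match the geometric perturbation $g_\eps=\eps^{3/2}b(\cdot/\eps)$ to order $m-2$, so that $T_\eps\varphi\in V(\Omega_\eps)=W^{m,2}(\Omega_\eps)\cap W_0^{m-1,2}(\Omega_\eps)$. Expanding $Q_{\Omega_\eps}(T_\eps\varphi)$, the dominant cross terms between $\varphi$ and the scaled corrector produce, after a two-scale/unfolding computation on the cell $Y$ and using the polyharmonic Green formula of Theorem~\ref{PGF: flat case} on $Y\times(-\infty,0)$, the surface term $K\int_W(\partial^{m-1}\varphi/\partial n^{m-1})^2\,dS$; the equivalence of the two characterisations of $K$ follows by integrating by parts the energy integral of $V$ using the same Green formula. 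The characterisation of the limiting operator as $\widehat H_\Omega$ then follows from a density/duality argument within Vainikko's framework.

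The main obstacle lies in case (iii). Three technical issues must be overcome: (a) establishing the existence, uniqueness and exponential decay as $y_N\to-\infty$ of the corrector $V$ in the half-strip, which requires a variational problem in weighted Sobolev spaces and exploits the non-standard mixed boundary conditions given on $Y\times\{0\}$; (b) verifying that $T_\eps\varphi$ genuinely lies in $W_0^{m-1,2}(\Omega_\eps)$, which means matching boundary traces of the corrector with the geometric data $g_\eps$ up to order $m-2$ (the technical lemma deferred to the Appendix likely handles precisely this trace-matching); (c) the asymptotic expansion of the energy, in which one must show that remainder terms arising from the cut-off $\chi$, from higher-order mismatches between $\eps^\alpha b$ and $V$, and from the lower-order part of $Q_{\Omega_\eps}$, all vanish in the limit. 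Once these are in hand, the rest of condition (C) (namely (C2)(i) and (C3)(iii), the latter adapted to functions which now are only forced to vanish to order $m-2$ on $W$) follows by the standard trace and compactness techniques used in cases (i) and (ii).
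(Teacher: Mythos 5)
Part (i) coincides with the paper's argument. The serious problem is part (ii). The new boundary condition that must be extracted in the limit is $\tfrac{\partial^{m-1}v}{\partial x_N^{m-1}}=0$ on $W$ (the vanishing of $D^l v$ for $l\le m-2$ is automatic from $v\in W^{m-1,2}_0(\Omega)$), and your vertical Taylor expansion cannot produce it. The functions $v_\eps$ vanish on $\Gamma_\eps$ only together with their derivatives up to order $m-2$, so the expansion from $x_N=g_\eps(\bar x)$ gives no smallness whatsoever for $D^{m-1}v_\eps$; moreover, whatever decay it does give for lower-order derivatives holds for \emph{every} $\alpha>0$, since the only input is the amplitude $\eps^\alpha$ of the layer. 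An argument that is insensitive to the sign of $\alpha-3/2$ cannot prove (ii) without contradicting (i). The actual mechanism is different: one differentiates the constraint $D^{m-2}v_\eps(\bar x,g_\eps(\bar x))=0$ tangentially, obtaining a relation coupling $\tfrac{\partial^{m-1}v_\eps}{\partial x_N^{m-1}}$ with $\partial_j g_\eps=\eps^{\alpha-1}\partial_j b$, and then passes to the limit using the compensated-compactness Lemma~4.3 of \cite{CasDiaz} applied to the divergence-free fields $V^j_\eps$ built from these derivatives; this yields $\tfrac{\partial^{m-1}v}{\partial x_N^{m-1}}(\bar x,0)\,\partial_{y_j}b(\bar y)=0$, and the \emph{non-constancy of $b$} (nowhere used in your argument) forces the trace to vanish. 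It is precisely here that $\alpha<3/2$ enters, as the critical scaling for that lemma.

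For part (iii) you take a genuinely different route: a recovery-sequence/corrector expansion of the energy, versus the paper's direct passage to the limit in the weak formulation via the unfolding operator (macroscopic step, Proposition~\ref{prop: macro limit}) followed by oscillating test functions $\eps^{m-1/2}\psi(\bar x,\bar x/\eps,x_N/\eps)$ to identify $\hat v=-V(y)\,\partial^{m-1}_{x_N}v(\bar x,0)$ (microscopic step). Your approach is legitimate in principle and would make the appearance of $K$ as the cell energy of $V$ very transparent, but as written it is a plan rather than a proof: (a) the claim $T_\eps\varphi\in W_0^{m-1,2}(\Omega_\eps)$ requires \emph{exact} cancellation of the traces up to order $m-2$ on the oscillating boundary, which matching only the leading order of $\varphi$ against $\eps^{m-1/2}V$ does not achieve (further correctors or a projection are needed; the paper sidesteps this entirely by using the pullback $\varphi\circ\Phi_\eps$); (b) condition (C) and Theorem~\ref{arlathm} are formulated for the \emph{same} form $Q$ on $\Omega$ and $\Omega_\eps$, so replacing the limit form by $\widehat Q_\Omega$ with the extra boundary term is not covered by the framework as stated and would need its own justification; (c) the corrector $V$ is not exponentially decaying and is only unique modulo $a\,y_N^{m-1}$, so the well-posedness discussion must be carried out in $w^{m,2}_{{\rm Per}_Y}(Y\times(-\infty,0))$ as in Lemma~\ref{lemma: V poly}, not in a weighted space with decay. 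The deferred Appendix lemma in the paper is not about trace matching but about the strong $L^2$ convergence of $\eps^{-(l-1)}\partial_{x_N}^{m-l}\varphi(\hat\Phi_\eps(y))$ to $\tfrac{y_N^{l-1}}{(l-1)!}\partial_{x_N}^{m-1}\varphi(\bar x,0)$, which is what makes the macroscopic limit work.
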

\begin{proof}
Statement $(i)$ is a straightforward application of  Theorem \ref{thm: spectral stability polyharmonics} with $k=m-1$. To prove $(ii)$ we check that Condition (C) in Definition \ref{def: cond C} is satisfied with $V(\Omega) = W^{m,2}_{0,W}(\Omega)\cap W^{m-1,2}_0(\Omega)$, and $V(\Omega_\eps) = W^{m,2}(\Omega_\eps) \cap W^{m-1,2}_0(\Omega_\eps)$. Here $W^{m,2}_{0,W}(\Omega)$ is the closure in $W^{m,2}(\Omega)$ of the space of functions vanishing in a neighborhood of $W$. Let $K_\eps = \Omega$ for all $\eps > 0$.  Then we see immediately that condition \eqref{def: condition C null measure} and condition (C1) are satisfied. We define now $T_\eps$ as the extension by zero operator from $W^{m,2}_{0,W}(\Omega)$ to $W^{m,2}(W \times (-1, +\infty))$ and $E_\eps$ as the restriction operator to $\Omega$. With these definitions it is not difficult to prove that conditions (C2) and (C3)(i),(ii) are satisfied. It remains to prove that condition (C3)(iii) holds. Let $v_\eps \in W^{m,2}(\Omega_\eps) \cap W^{m-1,2}_0(\Omega_\eps)$ be such that $\norma{v_\eps}_{W^{m,2}(\Omega_\eps)} \leq C$ for all $\eps > 0$. Possibly passing to a subsequence there exists a function $v \in W^{m-1,2}(\Omega)$ such that $v_\eps|_{\Omega} \rightharpoonup v$ in $W^{m,2}(\Omega)$ and $v_\eps|_{\Omega} \to v$ in $W^{m-1,2}(\Omega)$. By considering the sequence of functions $T_\eps (v_\eps|\Omega)$ it is not difficult to prove that $v \in W^{m-1,2}_0(\Omega)$. It remains to check that $\frac{\partial^{m-1} v}{\partial x^{m-1}_N} = 0$ on $W \times \{0\}$. This is proven exactly as in~\cite[Theorem 7.3]{ArrLamb} by applying Lemma 4.3 from \cite{CasDiaz} to the vector field $V_\eps^i$ defined by
\[
V_\eps^i = \bigg(0, \cdots, 0, - \frac{\partial^{m-1} v_\eps}{\partial x_N^{m-1}}, 0, \cdots, 0, \frac{\partial^{m-1} v_\eps}{\partial x_N^{m-2} \partial x_i} \bigg),
\]
for all $i = 1, \dotsc, N-1$, where the only non-zero entries are the $i$-th and the $N$-th ones. We remark that it is possible to apply Lemma 4.3 from \cite{CasDiaz} because by Theorem \ref{thm: spectral stability polyharmonics} the critical threshold for all the polyharmonics operator with strong intermediate boundary conditions is $\alpha = 3/2$, which coincides with the critical value in \cite{CasDiaz}. We then deduce that
$
\frac{\partial^{m-1} v (\bar{x},0)}{\partial x_N^{m-1}} \frac{\partial b(\bar{y})}{\partial y_i} =  0$,  a.e. $ W\times Y$.
Since $b$ is a non-constant smooth function  we must have  $\frac{\partial^{m-1} v (\bar{x},0)}{\partial x_N^{m-1}} =0$ a.e. on $W$. This concludes the proof of condition $(C3)(iii)$.

We provide a proof of $(iii)$ in  Sections 5.1 and 5.2.
\end{proof}

\begin{remark}\label{misprint} We take the chance to point out a misprint in \cite[Theorem~1,~(ii)]{ArrFerLambtrih} where the  condition $\partial^m_{x_N} u + K \partial_{x_N}^{m-1} u = 0$ in our Theorem~\ref{thm: poly strong} (iii) above, appears  for $m=3$ with  $-K$ instead of $+K$ as it should be.
\end{remark}


\subsection{Critical case - Macroscopic problem.}

In this section we prove Theore~\ref{thm: poly strong} (iii). Let us define a diffeomorphism $\Phi_\epsilon$ from $\Omega_\eps$ to $\Omega$ by
\[
\Phi_\eps( \bar{x}, x_N) = (\bar{x}, x_N - h_\eps(\bar{x}, x_N)), \quad \textup{for all $x=(\bar{x}, x_N) \in \Omega_\eps$,}
\]
where $h_\epsilon$ is defined by
\[
h_\epsilon (\bar{x}, x_N) =
\begin{cases}
0, &\textup{if $-1 \leq x_N \leq -\epsilon$},\\
g_\epsilon(\bar{x})\Big(\frac{x_N+\epsilon}{g_\epsilon(\bar{x})+\epsilon}\Big)^{m+1}, &\textup{if $-\epsilon \leq x_N \leq g_\epsilon(\bar{x})$}.
\end{cases}
\]
By standard calculus one can prove the following

\begin{lemma}
\label{lemma: h_eps}
The map $\Phi_\epsilon$ is a diffeomorphism of class $C^m$ and there exists a constant
$c > 0$ independent of $\epsilon$ such that
$
\abs{h_\epsilon} \leq c \epsilon^\alpha$ and $\left \lvert D^l h_\epsilon \right \rvert \leq c \epsilon^{\alpha - l}$,
for all $l= 1, \dots, m$, $\epsilon > 0$ sufficiently small.
\end{lemma}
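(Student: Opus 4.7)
The plan is to reduce the three assertions to elementary calculus by a rescaling. First, I would verify $C^m$-regularity across the interface $x_N = -\epsilon$: on the upper piece, $h_\epsilon$ contains the factor $(x_N + \epsilon)^{m+1}$, so by Leibniz every mixed partial derivative of $h_\epsilon$ of total order at most $m$ vanishes as $x_N \downarrow -\epsilon$, matching the trivial extension from below. Hence $h_\epsilon \in C^m(\overline{\Omega_\epsilon})$ and $\Phi_\epsilon \in C^m$.

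For the pointwise bounds I would introduce scaled variables $\bar y := \bar x / \epsilon$, $y_N := (x_N + \epsilon)/\epsilon$. On the upper layer $-\epsilon \le x_N \le g_\epsilon(\bar x)$, the definitions of $h_\epsilon$ and $g_\epsilon$ give
\begin{equation*}
h_\epsilon(\bar x, x_N) = \epsilon^\alpha \, b(\bar y) \left( \frac{y_N}{1 + \epsilon^{\alpha - 1} b(\bar y)} \right)^{\!m+1} =: \epsilon^\alpha \, H_\epsilon(\bar y, y_N),
\end{equation*}
with $y_N \in [0, 1 + \epsilon^{\alpha - 1} b(\bar y)]$. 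Since $\alpha = 3/2 > 1$, the small parameter $\epsilon^{\alpha - 1}$ keeps the denominator in a uniformly compact subset of $(0, \infty)$, so $H_\epsilon$ has $C^m$-norm in $(\bar y, y_N)$ bounded independently of $\epsilon$. Each derivative of $H_\epsilon$ in $\bar x$ or $x_N$ produces a factor $\epsilon^{-1}$ by the chain rule, whence $\abs{D^l h_\epsilon} \le c \, \epsilon^{\alpha - l}$ for $l = 1, \dots, m$. The sup bound $\abs{h_\epsilon} \le \norma{b}_\infty \, \epsilon^\alpha$ follows at once from $(x_N + \epsilon)/(g_\epsilon + \epsilon) \le 1$ on the relevant region.

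Finally, to see $\Phi_\epsilon$ is a diffeomorphism, note that its Jacobian is block triangular with diagonal entries $1$ and $1 - \partial_{x_N} h_\epsilon$, and by the case $l = 1$ just established $\abs{\partial_{x_N} h_\epsilon} \le c \, \epsilon^{1/2} \to 0$, so $\det D\Phi_\epsilon > 0$ for small $\epsilon$. Bijectivity follows because the map $x_N \mapsto x_N - h_\epsilon(\bar x, x_N)$ is then strictly increasing in $x_N$ for each fixed $\bar x$ and sends $[-1, g_\epsilon(\bar x)]$ onto $[-1, 0]$; the lower piece is the identity, and the two pieces agree at $x_N = -\epsilon$. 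The only bookkeeping subtlety in this plan — and the nearest thing to a real obstacle — is the uniformity in $\epsilon$ of the $C^m$-bound on $H_\epsilon$, which one checks by inspecting the rational expression in $b$ and $\epsilon^{\alpha - 1}$, whose values stay in a fixed compact subset of $(0, \infty)$ so that no singularity is approached as $\epsilon \to 0$.
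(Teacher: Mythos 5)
Your proof is correct, and it supplies in full the argument that the paper itself omits (the paper merely asserts the lemma follows ``by standard calculus''). The rescaling to the cell variables $(\bar y,y_N)=(\bar x/\epsilon,(x_N+\epsilon)/\epsilon)$, the uniform $C^m$ bound on $H_\epsilon$ coming from $1+\epsilon^{\alpha-1}b(\bar y)\in[1,2]$ for small $\epsilon$, the vanishing of all partials of order $\le m$ at the interface $x_N=-\epsilon$ thanks to the factor $(x_N+\epsilon)^{m+1}$, and the monotonicity of $x_N\mapsto x_N-h_\epsilon(\bar x,x_N)$ are exactly the expected ingredients, and each step is carried out correctly.
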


As in \cite[Section 8.1]{ArrLamb}, we introduce the pullback operator $T_\epsilon$ from $L^2(\Omega)$ to $L^2(\Omega_\epsilon)$ given by $T_\epsilon u = u \circ \Phi_\epsilon$ for all $u\in L^2(\Omega)$. 

In order to proceed we find convenient to recall some notation and results in homogenization theory regarding the unfolding operator. We refer to \cite{Allaire, CioDo, CioDamGri, Daml} for the proof of the main properties of the operator, and we mention that recent developments can be found in the article \cite{ArrVillPesSIAM}.\\
For any $k \in \Z^{N-1}$ and $\eps > 0$ we define
\begin{equation}
\label{def: anistropic unfolding cell}
\left\{
\begin{aligned}
&C^k_\eps = \eps k + \eps Y,\\
&I_{W,\eps} = \{k \in \Z^{N-1} : C^k_\eps \subset W\},\\
&\widehat{W}_\eps = \bigcup_{k \in I_{W, \eps}} C^k_\eps.
\end{aligned}
\right.
\end{equation}
Then we give the following

\begin{definition}
\label{def: anisotropic unfolding}
Let $u$ be a real-valued function defined in $\Omega$. For any $\epsilon > 0$ sufficiently small the unfolding $\hat{u}$ of $u$ is the real-valued function defined on $\widehat{W}_\epsilon \times Y \times (-1/\epsilon, 0)$ by
\[
\hat{u}(\bar{x}, \bar{y}, y_N) = u\Big( \epsilon\Big[\frac{\bar{x}}{\epsilon}\Big] + \epsilon \bar{y}, \epsilon y_N \Big),
\]
for almost all $(\bar{x}, \bar{y}, y_N)) \in \widehat{W}_\epsilon \times Y \times (-1/\epsilon, 0)$, where $\big[\frac{\bar{x}}{\epsilon}\big]$ denotes the integer part of the vector $\bar{x} \epsilon^{-1}$ with respect to $Y$, i.e., $[\bar{x} \epsilon^{-1}] = k$ if and only if $\bar{x} \in C^k_\epsilon$.
\end{definition}

The following lemma will be often used in the sequel. For a proof we refer to  \cite[Proposition 2.5(i)]{CioDamGri2}.

\begin{lemma}
\label{lemma: exact int formula}
Let $a \in [-1,0[$ be fixed. Then
\begin{equation}
\label{eq: exact int formula}
\int_{\widehat{W}_\eps \times (a,0)} u(x) dx = \eps \int_{\widehat{W}_\eps \times Y \times (a/\eps,0)} \hat{u}(\bar{x},y) d\bar{x} dy
\end{equation}
for all $u \in L^1(\Omega)$ and $\eps>0$ sufficiently small. Moreover
\begin{equation*}
\int_{\widehat{W}_\eps \times (a,0)} \left|\frac{\partial^l u(x)}{\partial x_{i_1} \cdots \partial x_{i_l}} \right|^2 dx = \eps^{1-2l} \int_{\widehat{W}_\eps \times Y \times (a/\eps,0)} \left| \frac{\partial^l \hat{u}}{\partial y_{i_1} \cdots \partial y_{i_l}}(\bar{x},y) d\bar{x} \right|^2dy,
\end{equation*}
for all $l \leq m$,  $u \in W^{m,2}(\Omega)$ and $\eps>0$ sufficiently small.
\end{lemma}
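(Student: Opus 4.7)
The lemma is essentially a cell-by-cell change of variables, so the plan is to work first on a single cell and then sum over $k \in I_{W,\eps}$.

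For the first identity, I fix $k \in I_{W,\eps}$ and observe that by Definition~\ref{def: anisotropic unfolding}, when $\bar{x} \in C^k_\eps$ the integer part $[\bar{x}/\eps]$ equals $k$, so $\hat{u}(\bar{x},\bar{y},y_N) = u(\eps k + \eps\bar{y}, \eps y_N)$ is independent of $\bar{x}$ on $C^k_\eps$. Hence
\[
\int_{C^k_\eps \times Y \times (a/\eps,0)} \hat{u}\, d\bar{x}\, dy = |C^k_\eps|\int_{Y \times (a/\eps,0)} u(\eps k + \eps\bar{y},\eps y_N)\, dy.
\]
Since $|C^k_\eps| = \eps^{N-1}$, the affine change of variables $(\bar{z},z_N)=(\eps k + \eps\bar{y},\eps y_N)$ has Jacobian $\eps^N$, mapping $Y \times (a/\eps,0)$ bijectively onto $C^k_\eps \times (a,0)$. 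This yields
\[
\int_{C^k_\eps \times Y \times (a/\eps,0)} \hat{u}\, d\bar{x}\, dy = \eps^{-1}\int_{C^k_\eps \times (a,0)} u(x)\, dx,
\]
and summing over $k \in I_{W,\eps}$ gives \eqref{eq: exact int formula}.

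For the second identity, the key computation is the scaling of derivatives. Since $\bar{x} \mapsto [\bar{x}/\eps]$ is locally constant, on the interior of each cell $C^k_\eps$ the partial derivatives of $\hat{u}$ with respect to $\bar{y}$ and $y_N$ are computed by the chain rule from $u$ evaluated at $\eps k + \eps\bar{y}, \eps y_N$: a single derivative contributes a factor $\eps$, so by induction on $l$,
\[
\frac{\partial^l \hat{u}}{\partial y_{i_1}\cdots \partial y_{i_l}}(\bar{x},y) = \eps^l\, \widehat{\left(\frac{\partial^l u}{\partial x_{i_1}\cdots \partial x_{i_l}}\right)}(\bar{x},y),
\]
for a.e.\ $(\bar{x},y)$ in each $C^k_\eps \times Y \times (a/\eps,0)$, provided $u \in W^{l,2}(\Omega)$.

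Applying the first identity to the $L^1$ function $|\partial^l u/\partial x_{i_1}\cdots \partial x_{i_l}|^2$, then substituting the above derivative relation, yields the claimed factor of $\eps \cdot \eps^{-2l} = \eps^{1-2l}$. The restriction $\eps$ small just ensures that $\widehat{W}_\eps$ is well-defined and that $a/\eps \le -1$ so that the $y_N$-range contains the rescaled image of $(a,0)$. There is no real obstacle here; the only delicate point is to make sure that on the measure-zero set $\bigcup_k \partial C^k_\eps$ where $\hat{u}$ may jump in $\bar{x}$ these jumps do not contribute, which is immediate since we integrate over the open cells and the boundary has measure zero.
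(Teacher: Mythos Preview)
Your argument is correct: the cell-by-cell change of variables and the chain-rule scaling of derivatives are exactly the standard computations underlying the unfolding operator, and they give both identities. The remark about needing $a/\eps \le -1$ is not actually required (since $a\in[-1,0)$ already guarantees $(a/\eps,0)\subset(-1/\eps,0)$), but this does no harm.

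The paper does not give its own proof here; it simply cites \cite[Proposition~2.5(i)]{CioDamGri2}. So rather than taking a different route, you have supplied the self-contained verification that the paper outsources to the literature. Your computation is essentially the same one that appears in that reference.
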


Let $W^{m,2}_{{\rm Per}_Y, {\rm loc}}(Y \times (-\infty,0))$ be the subspace of $W^{m,2}_{\rm loc}(\R^{N-1} \times (-\infty,0))$ containing $Y$-periodic functions in the first $(N-1)$ variables $\bar{y}$. We then define $W^{m,2}_{loc}(Y \times (-\infty,0))$ to be the space of functions in
$W^{m,2}_{{\rm Per}_Y, {\rm loc}}(Y \times (-\infty,0))$ restricted to $Y \times (-\infty,0)$. Finally we set
\begin{multline}
w^{m,2}_{{\rm Per}_Y}(Y \times (-\infty, 0)) := \big\{ u \in W^{m,2}_{{\rm Per}_Y, {\rm loc}}(Y \times (-\infty,0))\\ 
: \norma{D^\gamma u}_{L^2(Y \times (-\infty,0))} < \infty, \forall |\gamma| = m \big\}.
\end{multline}
For any $d<0$, let $\mathcal{P}_{hom,y}^{l}(Y \times (d,0))$ be the space of homogeneous polynomials of degree at most $l$ restricted to the  domain $(Y \times (d,0))$. Let $\eps > 0$ be fixed. We define the projectors $P_i$ from $L^2(\widehat{W}_\eps, W^{m,2}(Y \times (-1/\eps , 0)))$ to $L^2(\widehat{W}_\eps, \mathcal{P}_{hom,y}^{i}(-1/\eps, 0))$ by setting
\begin{equation*}
P_i (\psi )= \sum_{|\eta|=i} \int_{Y} D^\eta \psi(\bar{x},\bar{\zeta},0) d\bar{\zeta} \frac{y^\eta}{\eta!}
\end{equation*}
for all $i = 0, \dots, m-1$. We now set $Q_{m-1} = P_{m-1}$, $Q_{m-2} = P_{m-2} (\mathbb{I} - Q_{m-1})$, etc., up to $Q_{0} = P_0\big(\mathbb{I} - \sum_{j=1}^{m-1} Q_j\big)$. Note that $Q_{m-j}$, $j=1, \dots, m$ is a projection on the space of homogeneous  polynomials of degree $m-j$, with the property that $Q_{m-k} (p) = 0$ for all polynomials $p$ of degree  $m-k$ with $k\ne j$. We finally set
\begin{equation}
\label{def: p}
\p= Q_0 + Q_1 + \cdots + Q_{m-1},
\end{equation}
which is a projector on the space of polynomials in $y$ of degree at most $m-1$. Note that $D_y^\beta \p(\psi )(\bar{x}, \bar{y}, 0) = \int_{Y}D_y^\beta\psi (\bar{x}, \bar{y}, 0) d\bar y$ for all  $|\beta| = 0,\dots, m-1$. 
In particular, it follows that
$ \int_{Y} ( D_y^\beta\psi (\bar{x}, \bar{y}, 0) - D_y^\beta \p(\psi )(\bar{x}, \bar{y}, 0) ) d\bar y = 0
$
for almost all $\bar{x}$ in $\widehat{W}_\eps$, for all  $|\beta| = 0,\dots, m-1$. 

\begin{lemma}
\label{lemma: unfolding convergence poly}
Let $m \in \N$, $m \geq 2$ be fixed. The following statements hold:
\begin{enumerate}[label=(\roman*)]
\item Let $v_\epsilon \in W^{m,2}(\Omega)$ with $\norma{\hat {v_\epsilon}}_{W^{m,2}(\Omega)} \leq M$, for all $\epsilon > 0$. Let $V_\epsilon$ be defined by
\[
\begin{split}
V_\epsilon(\bar{x}, y) =  & \hat{v_\epsilon}(\bar{x}, y) - \p(v_\epsilon)(\bar{x}, y),
\end{split}
\]
for $(\bar{x}, y) \in \widehat{W_\epsilon}\times Y\times (-1/\epsilon, 0)$, where $\p$ is defined by \eqref{def: p} . Then there exists a function $\hat{v}\in L^2(W, w^{m,2}_{\textup{Per}_Y}(Y\times (-\infty,0)))$ such that, possibly passing to a subsequence, for every $d<0$
\begin{enumerate}[label=(\alph*)]
\item $\frac{D_y^{\gamma}V_\epsilon}{\epsilon^{m-1/2}} \rightharpoonup D_y^{\gamma}\hat{v}$ in $L^2(W\times Y \times (d,0))$ as $\epsilon \to 0$, for any $\gamma \in \N_0^N$, $\abs{\gamma} \leq m-1$.
\item $\frac{D_y^{\gamma}V_\epsilon}{\epsilon^{m-1/2}} \rightharpoonup D_y^{\gamma}\hat{v}$ in $L^2(W\times Y \times (-\infty,0))$ as $\epsilon \to 0$, for any $\gamma \in \N_0^N$, $\abs{\gamma} = m$,
\end{enumerate}
where it is understood that the functions $V_\epsilon, D_y^{\gamma}V_\epsilon$ are extended by zero to the whole of $W\times Y \times (-\infty,0)$ outside their natural domain of definition $\widehat{W_\epsilon}\times Y\times (-1/\epsilon, 0) $.
\item If $\psi\in W^{1,2}(\Omega)$, then $\lim_{\eps \to 0} \widehat{(T_\epsilon \psi)_{|\Omega}} =  \psi(\bar{x},0)$ in $L^2(W \times Y \times (-1,0))$.
\end{enumerate}
\end{lemma}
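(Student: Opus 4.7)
The plan is to prove (i) by an iterated Poincar\'e argument on the slab $Y \times (d,0)$ followed by weak compactness, and (ii) by an explicit computation combined with a density argument. In both cases the workhorse is Lemma~\ref{lemma: exact int formula}, which transfers Sobolev norms between $\Omega$ and the unfolded cell.

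For (i), the cornerstone is the orthogonality property built into $V_\epsilon$ by the construction of $\mathcal{P}$, namely
\[
\int_{Y} D_y^{\beta} V_\epsilon(\bar x, \bar y, 0)\, d\bar y = 0, \qquad \text{for all } |\beta| \leq m-1,
\]
for a.e.\ $\bar x \in \widehat{W}_\epsilon$. For $|\gamma|=m$, since $\mathcal{P}(v_\epsilon)$ is a polynomial in $y$ of degree at most $m-1$, one has $D_y^{\gamma} V_\epsilon = D_y^{\gamma} \hat{v}_\epsilon$, and Lemma~\ref{lemma: exact int formula} yields
\[
\|D_y^{\gamma} V_\epsilon\|^2_{L^2(\widehat{W}_\epsilon \times Y \times (-1/\epsilon, 0))} = \epsilon^{2m-1}\|D^\gamma v_\epsilon\|^2_{L^2(\Omega)} \leq C\epsilon^{2m-1},
\]
which gives the correct scaling in (b). For lower orders the main tool is the Poincar\'e-type inequality: if $u \in W^{1,2}(Y \times (d,0))$ satisfies $\int_Y u(\bar y, 0)\, d\bar y = 0$, then
\[
\|u\|_{L^2(Y \times (d,0))} \leq C(1+|d|)\|\nabla u\|_{L^2(Y \times (d,0))},
\]
obtained by splitting $u = (u-\overline{u}) + \overline{u}(y_N)$, applying standard Poincar\'e-Wirtinger on $Y$ fibrewise to $u - \overline{u}$, and bounding $|\overline{u}(y_N)|$ via the fundamental theorem of calculus starting from the vanishing boundary mean. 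Iterating this inequality on $D_y^\beta V_\epsilon$ for $|\beta|$ decreasing from $m-1$ down to $0$ yields $\|D_y^{\beta} V_\epsilon\|_{L^2(\widehat{W}_\epsilon \times Y \times (d,0))} \leq C_d\, \epsilon^{m-1/2}$ for every $|\beta| < m$.

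After dividing by $\epsilon^{m-1/2}$, weak compactness in $L^2$ combined with a diagonal extraction over $d = -n$, $n \in \mathbb{N}$, produces a single subsequence along which the stated weak convergences hold on $W \times Y \times (d,0)$ for every $d<0$ (and on the whole half-strip for $|\gamma|=m$). The limits identify with the derivatives of a single function $\hat{v} \in L^2(W, w^{m,2}_{\textup{Per}_Y}(Y \times (-\infty, 0)))$ by testing against compactly supported smooth functions and integrating by parts. The $Y$-periodicity of $\hat{v}$ in $\bar y$ follows by passing to the weak limit in the quasi-periodicity identity $\hat{v}_\epsilon(\bar x, \bar y + e_i, y_N) = \hat{v}_\epsilon(\bar x + \epsilon e_i, \bar y, y_N)$ (inherited by $V_\epsilon$, since $\mathcal{P}$ is built from the same averaged quantities), whose right-hand side collapses to $\hat{v}(\bar x, \bar y, y_N)$ in the limit as the macroscopic shift $\epsilon e_i$ becomes infinitesimal.

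For (ii), substitution of the definitions gives
\[
\widehat{(T_\epsilon \psi)_{|\Omega}}(\bar x, \bar y, y_N) = \psi\bigl(\epsilon[\bar x/\epsilon] + \epsilon \bar y,\; \epsilon y_N - h_\epsilon(\epsilon[\bar x/\epsilon] + \epsilon \bar y,\, \epsilon y_N)\bigr),
\]
and by Lemma~\ref{lemma: h_eps} one has $|h_\epsilon| \leq C\epsilon^{3/2}$, so the argument of $\psi$ tends to $(\bar x, 0)$ uniformly in $(\bar y, y_N) \in Y \times (-1, 0)$. For smooth $\psi$, pointwise convergence plus dominated convergence deliver the $L^2$-limit $\psi(\bar x, 0)$. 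The extension to $\psi \in W^{1,2}(\Omega)$ proceeds by density, once the unfolding is shown to be uniformly $L^2$-bounded: combining Lemma~\ref{lemma: exact int formula} with the slice estimate $\int_{-\epsilon}^0 |\psi(\bar x, x_N)|^2\, dx_N \leq 2\epsilon|\psi(\bar x, 0)|^2 + \epsilon^2 \|\partial_{x_N}\psi(\bar x,\cdot)\|^2_{L^2(-\epsilon, 0)}$ with the continuity of the trace $W^{1,2}(\Omega) \to L^2(W)$ yields $\|\widehat{T_\epsilon \psi}\|_{L^2(W \times Y \times (-1, 0))} \leq C\|\psi\|_{W^{1,2}(\Omega)}$. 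The main obstacle is the iterated Poincar\'e argument in (i): the orthogonality conditions must be invoked at exactly the right differentiation order to bring each $D_y^\beta V_\epsilon$ under the control of $\|D_y^m V_\epsilon\|_{L^2}$, while the $d$-dependence of the Poincar\'e constants rules out a uniform bound on the whole half-strip and necessitates the diagonal extraction at the compactness step.
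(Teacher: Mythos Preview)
Your proposal is correct and follows essentially the same approach the paper indicates: the paper's proof consists of a single sentence deferring to \cite[Lemma~8.9]{ArrLamb} and noting that, since $\mathcal{P}$ is a projector onto polynomials of degree $\leq m-1$, a Poincar\'e--Wirtinger-type inequality still holds. Your iterated Poincar\'e argument on $Y\times(d,0)$, driven by the vanishing cell-averages $\int_Y D_y^\beta V_\epsilon(\bar x,\bar y,0)\,d\bar y=0$ for $|\beta|\le m-1$, is precisely the mechanism behind that remark, and your treatment of (ii) via density and the uniform $L^2$-bound matches the strategy of \cite{ArrLamb} as well.
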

\begin{proof}
The proof follows as in the proof ~\cite[Lemma 8.9]{ArrLamb} by noting that $\p$ is a projector on the space of polynomials of degree at most $m-1$, so that a Poincar\'{e}-Wirtinger-type inequality still holds. 
\end{proof}

Let $f_\epsilon\in L^2(\Omega_\epsilon)$ and $f\in L^2(\Omega)$ be such that $f_\epsilon \rightharpoonup f$ in $L^2(\R^N)$ as $\epsilon \to 0$,
with the understanding that the functions are extended by zero outside their natural domains.
Let $v_\epsilon \in V(\Omega_\epsilon) = W^{m,2}(\Omega_\epsilon) \cap W^{m-1,2}_0(\Omega_\epsilon)$ be such that  for all $\epsilon >0$ small enough
\begin{equation}
\label{eq: Poisson prblm poly}
H_{\Omega_\epsilon, S} v_\epsilon = f_\epsilon .
\end{equation}
Then $\norma{v_\epsilon}_{W^{m,2}(\Omega_\epsilon)} \leq M$ for all $\epsilon > 0$ sufficiently small, hence, possibly passing to a subsequence there exists $v \in W^{m,2}(\Omega)\cap W^{m-1,2}_0(\Omega)$ such that $v_\epsilon \rightharpoonup v$ in $W^{m,2}(\Omega)$ and $v_\epsilon \rightarrow v$ in $L^2(\R^N)$.

Let $\varphi \in V(\Omega) = W^{m,2}(\Omega) \cap W^{m-1,2}_0(\Omega)$ be  fixed. Since $T_\epsilon \varphi \in V(\Omega_\epsilon)$, by \eqref{eq: Poisson prblm poly} we have
\begin{equation}
\label{eq: Poisson 1 poly}
\int_{\Omega_\epsilon} D^mv_\epsilon : D^m T_\epsilon \varphi \, \diff{x} + \int_{\Omega_\epsilon}v_\epsilon T_\epsilon \varphi\, \diff{x} = \int_{\Omega_\epsilon} f_\epsilon T_\epsilon \varphi\, \diff{x},
\end{equation}
and passing to the limit as $\epsilon \to 0$ we get
$
\int_{\Omega_\epsilon} v_\epsilon T_\epsilon \varphi\, \diff{x} \to \int_{\Omega} v \varphi \,\diff{x}$ and  $ \int_{\Omega_\epsilon} f_\epsilon T_\epsilon \varphi\, \diff{x} \to \int_{\Omega} f \varphi \,\diff{x}$.

Now consider the first integral in the right hand-side of \eqref{eq: Poisson 1 poly}. Set $K_\epsilon = W \times (-1, -\epsilon)$. By splitting the integral in three terms corresponding to $\Omega_\epsilon \setminus \Omega$, $\Omega \setminus K_\epsilon$ and $K_\epsilon$ and by arguing as in \cite[Section 8.3]{ArrLamb} one can show that
$
\int_{K_\epsilon} D^m v_\epsilon : D^m \varphi \, \diff{x} \to \int_\Omega D^m v : D^m \varphi \, \diff{x}$ and  $ \int_{\Omega_\epsilon \setminus \Omega} D^m v_\epsilon : D^m T_\epsilon \varphi \, \diff{x} \to 0$,
as $\epsilon \to 0$. Let us define $Q_\eps$ by
\[Q_\epsilon = \widehat{W_\epsilon} \times (-\epsilon,0).\]
We split again the remaining integral in two summands as follows:
\begin{multline}\label{eq: integralsplit poly}
\int_{\Omega_\epsilon \setminus K_\epsilon} D^m v_\epsilon : D^m T_\epsilon \varphi \, \diff{x}\\
= \int_{\Omega_\epsilon \setminus (K_\epsilon \cup Q_\epsilon)} D^m v_\epsilon : D^m T_\epsilon \varphi \, \diff{x} + \int_{Q_\epsilon} D^m v_\epsilon : D^m T_\epsilon \varphi \, \diff{x}.
\end{multline}
As in \cite[Section 8.3]{ArrLamb},
$
\int_{\Omega_\epsilon \setminus (K_\epsilon \cup Q_\epsilon)} D^m v_\epsilon : D^m T_\epsilon \varphi \, \diff{x} \to 0,
$
as $\epsilon \to 0$. It remains to analyse the limit as $\epsilon \to 0$ of the last summand in the right-hand side of \eqref{eq: integralsplit poly}. To do so, we also need the following lemma in the proof of which we  use notation and rules of calculus recalled in Section~\ref{sec: high variational}.

\begin{lemma}
\label{lemma: convergence b poly} Let $l\in \mathbb{N}$, $l\le m$,  and let $i_1,\dots, i_l\in \{1, \dots , N \}$.
 The functions $\hat{h}_\epsilon(\bar{x},y)$, $\widehat{\frac{\partial^l h_\epsilon}{\partial x_{i_1} \cdots \partial x_{i_l}}}(\bar{x},y)$
 defined for $y \in Y \times (-1,0)$,  are independent of $\bar{x}$. Moreover, $\norma{\hat{h}_\epsilon}_{L^\infty} = O(\epsilon^{3/2})$, $\norma*{\widehat{\frac{\partial^l h_\epsilon}{\partial x_{i_1} \cdots \partial x_{i_l}}}(\bar{x},y)}_{L^\infty} = O(\epsilon^{3/2 - l})$ as $\epsilon \to 0$, and if $l\geq 2$ we have $\epsilon^{l-3/2} \widehat{\frac{\partial^l h_\epsilon}{\partial x_{i_1} \cdots \partial x_{i_l}}} (\bar{x},y) \to \frac{\partial^l (b(\bar{y})(y_N + 1)^{m+1})}{\partial y_{i_1} \cdots \partial y_{i_l}}$ as $\epsilon \to 0$, uniformly in $y \in Y \times (-1,0)$.
\end{lemma}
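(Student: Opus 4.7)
\textbf{Proof plan for Lemma \ref{lemma: convergence b poly}.} The plan is to compute the unfolding $\hat{h}_\epsilon$ explicitly and read off all the claimed properties from an elementary algebraic identity. First I would fix $(\bar{x},y)\in \widehat{W}_\epsilon \times Y \times (-1,0)$; since $\epsilon y_N\in (-\epsilon,0)\subset (-\epsilon,g_\epsilon(\cdot))$, the point $(\epsilon[\bar{x}/\epsilon]+\epsilon\bar{y},\epsilon y_N)$ lies in the branch where $h_\epsilon$ is given by the non-trivial formula. Using $g_\epsilon(\bar{x})=\epsilon^{3/2}b(\bar{x}/\epsilon)$, the $Y$-periodicity of $b$ (which gives $b([\bar{x}/\epsilon]+\bar{y})=b(\bar{y})$), and $\alpha=3/2$, one obtains the explicit expression
\begin{equation*}
\hat{h}_\epsilon(\bar{x},y)=\epsilon^{3/2}b(\bar{y})\Bigl(\frac{y_N+1}{1+\epsilon^{1/2}b(\bar{y})}\Bigr)^{m+1},
\end{equation*}
which is manifestly independent of $\bar{x}$ and of order $\epsilon^{3/2}$ uniformly in $y$, since $b$ is bounded. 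This proves the first two assertions concerning $\hat{h}_\epsilon$.

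Next I would invoke the standard commutation rule for the unfolding operator: for any $f\in W^{l,2}(\Omega)$ and any indices $i_1,\dots,i_l\in\{1,\dots,N\}$,
\begin{equation*}
\widehat{\frac{\partial^l f}{\partial x_{i_1}\cdots\partial x_{i_l}}}(\bar{x},y)=\epsilon^{-l}\,\frac{\partial^l\hat{f}}{\partial y_{i_1}\cdots\partial y_{i_l}}(\bar{x},y),
\end{equation*}
which is a direct consequence of Definition \ref{def: anisotropic unfolding} and the chain rule (the unfolding replaces $\bar{x}$ by $\epsilon[\bar{x}/\epsilon]+\epsilon\bar{y}$ and $x_N$ by $\epsilon y_N$, producing a factor $\epsilon$ in each partial derivative in $y$). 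Applied to $h_\epsilon$ this immediately gives the $\bar{x}$-independence of $\widehat{\partial^l h_\epsilon/\partial x_{i_1}\cdots\partial x_{i_l}}$ and the bound $O(\epsilon^{3/2-l})$, because it equals $\epsilon^{-l}\partial^l_y\hat{h}_\epsilon$ and $\hat{h}_\epsilon$ has size $\epsilon^{3/2}$ together with all its $y$-derivatives on the compact set $\overline{Y\times(-1,0)}$.

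For the convergence statement, I would set
\begin{equation*}
F_\epsilon(\bar{y},y_N):=\epsilon^{-3/2}\hat{h}_\epsilon=\frac{b(\bar{y})(y_N+1)^{m+1}}{(1+\epsilon^{1/2}b(\bar{y}))^{m+1}}.
\end{equation*}
Since $b\in C^\infty(\overline{W})$ and $t\mapsto (1+t)^{-(m+1)}$ is real-analytic near $0$, the family $\{F_\epsilon\}$ depends smoothly on the parameter $\epsilon^{1/2}$ and converges, together with all its $y$-derivatives of any order, to $F_0(\bar{y},y_N)=b(\bar{y})(y_N+1)^{m+1}$ uniformly on $\overline{Y\times(-1,0)}$. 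Combining this with the commutation rule above,
\begin{equation*}
\epsilon^{l-3/2}\widehat{\frac{\partial^l h_\epsilon}{\partial x_{i_1}\cdots\partial x_{i_l}}}=\epsilon^{-3/2}\frac{\partial^l\hat{h}_\epsilon}{\partial y_{i_1}\cdots\partial y_{i_l}}=\frac{\partial^l F_\epsilon}{\partial y_{i_1}\cdots\partial y_{i_l}}\ \xrightarrow[\epsilon\to 0]{}\ \frac{\partial^l(b(\bar{y})(y_N+1)^{m+1})}{\partial y_{i_1}\cdots\partial y_{i_l}}
\end{equation*}
uniformly on $\overline{Y\times(-1,0)}$, which is the last assertion.

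There is no serious obstacle here: the whole argument reduces to writing a single explicit formula for $\hat{h}_\epsilon$ and differentiating. The only tiny point of care is to keep track of the factor $\epsilon^{1/2}$ (rather than $\epsilon$) produced by the critical exponent $\alpha=3/2$ in the denominator $1+\epsilon^{1/2}b(\bar{y})$; this factor is harmless because it is smooth in $\epsilon^{1/2}$ and stays bounded away from zero uniformly in $\bar{y}$ for small $\epsilon$.
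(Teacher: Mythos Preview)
Your argument is correct and considerably more economical than the paper's. The paper first reads off the $L^\infty$ bounds from Lemma~\ref{lemma: h_eps} and Definition~\ref{def: anisotropic unfolding}, and then, for the convergence statement with $l\ge 2$, expands $\widehat{\partial^l h_\epsilon/\partial x_{i_1}\cdots\partial x_{i_l}}$ explicitly by applying the Leibniz rule~\eqref{eq: Leibnitz} and the Fa\`a di Bruno formula to the composite $(x_N+\epsilon)^{m+1}/(g_\epsilon(\bar x)+\epsilon)^{m+1}$, obtaining a double sum over subsets and partitions (see \eqref{proof: contih11}--\eqref{proof: contih final}); it then checks term by term that only the principal contribution survives in the limit. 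Your route bypasses all of this combinatorics: by first unfolding $h_\epsilon$ and using the periodicity of $b$, you obtain the closed form $\hat h_\epsilon=\epsilon^{3/2}F_\epsilon(y)$ with $F_\epsilon$ depending smoothly on the small parameter $\epsilon^{1/2}$, and then the commutation rule $\widehat{\partial^l_x h_\epsilon}=\epsilon^{-l}\partial^l_y\hat h_\epsilon$ reduces everything to the uniform convergence of the $y$-derivatives of a smooth one-parameter family on a compact set. What the paper's expansion buys is an explicit bookkeeping of all lower-order terms (potentially useful if one wanted quantitative error estimates), whereas your argument is shorter and in fact yields the convergence for every $l\ge 0$, not just $l\ge 2$.
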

\begin{proof}
First, note that the part of the statement involving the asymptotic behaviour of $\widehat{h}_\eps$ as $\eps \to 0$ follows directly from Lemma \ref{lemma: h_eps} and Definition \ref{def: anisotropic unfolding}. Assume now that  $l\geq 2$.
 By applying formula \eqref{eq: Leibnitz}  we have that
\begin{small}
\begin{equation}
\label{proof: contih11}
\widehat{\frac{\partial^l h_\eps}{\partial x_{i_1}\cdots \partial x_{i_l}}}(\bar{x},y) = \sum_{S \in \p(l)} \frac{\eps^{\alpha}}{\eps^{|S|}} \frac{\partial^{|S|} b(\bar{y})}{\prod_{j \in S} \partial y_{i_j}} \widehat{\frac{\partial^{l-|S|}}{\prod_{j \notin S} \partial x_{i_j}}} \Bigg(\frac{x_N+\eps}{g_\eps(\bar{x}) + \eps} \Bigg)^{m+1}
\end{equation}
\end{small}

Standard Calculus computations based on Formulas \eqref{eq: Faa di Bruno} and \eqref{eq: Leibnitz} give 
\begin{small}
\begin{multline}
\label{proof: formula derivatives x_N+eps}
\widehat{\frac{\partial^{l-|S|}}{\prod_{j \notin S} \partial x_{i_j}}}\bigg( \frac{x_N+\eps}{g_\eps(\bar{x}) + \eps} \bigg)^{m+1}\!= C\big(|S|\big)\, \eps^{-l + |S|}\frac{(y_N + 1)^{m+1-l + |S|}}{(\eps^{\alpha-1}b(\bar{y}) +1)^{m+1}} \prod_{j \notin S} \delta_{{i_j}N}\\
+\sum_{\substack{\Lambda \in \p(S^C) \\ \Lambda \neq \emptyset}} \sum_{\pi \in   {\rm Part} (\Lambda )  }
\eps^{\alpha |\pi| - |\pi| - l + |S|}  (-1)^{|\pi|} \frac{(m+ |\pi|)!}{m!} \frac{(m+1)!}{(m+1-l+|S|+|\Lambda|)!}\\
\cdot \frac{(y_N +1)^{m+1-l + |S| +|\Lambda|}}{(\eps^{\alpha-1}b(\bar y) + 1)^{m+1+|\pi|}} \prod_{k \in (S^C \setminus \Lambda)} \delta_{i_k N} \prod_{B \in \pi} \frac{\partial^{|B|}b(\bar{y})}{\prod_{l \in B} \partial y_{i_l}}.
\end{multline}
\end{small}
where $C\big(|S|\big) = \frac{(m+1)!}{(m+1-l + |S|)!} $.
By \eqref{proof: contih11} and \eqref{proof: formula derivatives x_N+eps} we deduce that
{\small
\begin{equation}
\label{proof: contih final}
\begin{split}
&\epsilon^{l-\alpha} \widehat{\frac{\partial^l h_\epsilon}{\partial x_{i_1} \cdots \partial x_{i_l}}} (\bar{x},y)\\
&= \epsilon^{l-\alpha} \sum_{S \in \p(l)} \eps^{\alpha - |S|} \frac{\partial^{|S|} b(\bar{y})}{\prod_{j \in S} \partial y_{i_j}} C\big(|S|\big) \eps^{-l + |S|}\frac{(y_N + 1)^{m+1-l + |S|}}{(\eps^{\alpha-1}b(\bar{y}) +1)^{m+1}} \prod_{j \notin S} \delta_{{i_j}N}\\
&+\eps^{l-\alpha}\!\sum_{S \in \p(l)} \eps^{\alpha - |S|} \frac{\partial^{|S|} b(\bar{y})}{\prod_{j \in S} \partial y_{i_j}} \sum_{\substack{\Lambda \in \p(S^C) \\ \Lambda \neq \emptyset}} \sum_{\pi \in {\rm Part}
(\Lambda )
}
\eps^{|\Lambda | - |\pi| - l + |S|}(-1)^{|\pi|} \frac{(m+ |\pi|)!}{m!}\\
&\cdot C(|S \cup \Lambda|)\frac{(y_N +1)^{m+1-l + |S| +|\Lambda|}}{(\eps^{\alpha-1} + 1)^{m+1+|\pi|}} \prod_{k \in (S^C \setminus \Lambda)} \delta_{i_k N} \prod_{B \in \pi}  \epsilon ^{\alpha -|B|}  \frac{\partial^{|B|}b(\bar{y})}{\prod_{l \in B} \partial y_{i_l}}.
\end{split}
\end{equation}}
It is possible to prove by direct computation that all the summands appearing in the second line in the right-hand side of \eqref{proof: contih final}  are vanishing as $\eps \to 0$.
%
%
By letting $\eps \to 0$ in \eqref{proof: contih final} we see that
{\small
\[
\begin{split}
\lim_{\eps \to 0} \epsilon^{l-\alpha} \widehat{\frac{\partial^l h_\epsilon}{\partial x_{i_1} \cdots \partial x_{i_l}}} (\bar{x},y) &= \sum_{S \in \p(l)} \frac{\partial^{|S|} b(\bar{y})}{\prod_{j \in S} \partial y_{i_j}} C(|S|) \,(y_N + 1)^{m+1-l + |S|} \prod_{j \notin S} \delta_{{i_j}N}\\
 &=\frac{\partial^l}{\partial y_{i_1} \cdots \partial y_{i_l}}\big( b(\bar{y})(y_N + 1)^{m+1} \big),
\end{split}
\]}
concluding the proof.
\end{proof}

Finally, we are ready to prove the following 

\begin{proposition}
\label{prop: macro limit}
Let $v_\epsilon \in V(\Omega_\epsilon)$ be such that $\norma{v_\epsilon}_{W^{m,2}(\Omega_\epsilon)} \leq M$ for all $\epsilon>0$.  Let $\widetilde{Y} = Y \times (-1,0)$ and $g(y) = b(\bar{y})(1 + y_N)^{m+1}$ for all $y \in \widetilde{Y}$. Moreover, let $\hat{v}\in L^2(W, w^{m,2}_{Per_Y}(Y\times (-\infty,0)))$ be as in Lemma~\ref{lemma: unfolding convergence poly}. Then
{\small \begin{multline*}
\int_{Q_\epsilon} D^m v_{\epsilon} : D^m(T_\epsilon \varphi) \, \diff{x} \rightarrow\\
 - \sum_{l=1}^{m-1} \binom{m}{l+1} \int_W \int_{\widetilde{Y}} \frac{y_N^{l-1}}{(l-1)!} D_y^{l+1} \bigg(\frac{\partial^{m-l-1} \hat{v}(\bar{x},y)}{\partial y_N^{m-l-1}} \bigg) : D^{l+1}_y g(y) \,\diff{y}\, \frac{\partial^{m-1}\varphi}{\partial x_N^{m-1}}(\bar{x}, 0) \diff{\bar{x}},
\end{multline*}
}for all $\varphi \in W^{m,2}(\Omega) \cap W^{m-1,2}_0(\Omega)$, as $\eps \to 0$.
\end{proposition}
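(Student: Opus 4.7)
My plan proceeds in three stages: unfolding, asymptotic expansion of $\widehat{T_\epsilon \varphi}$, and identification of the surviving contribution in the limit.

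First I would invoke Lemma \ref{lemma: exact int formula} on $Q_\epsilon = \widehat{W_\epsilon} \times (-\epsilon, 0)$ together with the scaling $\widehat{\partial^\alpha u} = \epsilon^{-|\alpha|} \partial_y^\alpha \hat u$. Since $\mathcal{P}(v_\epsilon)$ is a polynomial of degree at most $m-1$, we have $\partial_y^\alpha V_\epsilon = \partial_y^\alpha \hat v_\epsilon$ whenever $|\alpha|=m$, and the integral transforms into
\[
\epsilon^{1-2m} \sum_{|\alpha|=m}\binom{m}{\alpha}\int_{\widehat{W_\epsilon} \times \widetilde Y} \partial_y^\alpha V_\epsilon\cdot \partial_y^\alpha \widehat{T_\epsilon \varphi}\, d\bar x\, dy.
\]
By Lemma \ref{lemma: unfolding convergence poly}, $\epsilon^{-(m-1/2)}\partial_y^\alpha V_\epsilon \rightharpoonup \partial_y^\alpha \hat v$, so for a finite nonzero limit we need the remaining factor to be of precise size $\epsilon^{m-1/2}$.

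Next I would analyse $\widehat{T_\epsilon \varphi}$. Since $\varphi \in W^{m-1,2}_0(\Omega)$, all derivatives of $\varphi$ of total order $\leq m-2$ vanish on $W\times\{0\}$, and Taylor's formula in the $x_N$-variable at $0$ yields $\varphi(\bar \xi, \eta) = \frac{\eta^{m-1}}{(m-1)!} A(\bar \xi) + R(\bar \xi, \eta)$ with $A(\bar \xi) := \partial_{x_N}^{m-1}\varphi(\bar \xi, 0)$ and a remainder $R=O(\eta^m)$ controlled by $\partial_{x_N}^m\varphi \in L^2$. Substituting $\eta = \epsilon y_N - \hat h_\epsilon(y)$ and expanding binomially,
\[
\widehat{T_\epsilon \varphi} = \frac{A(\bar \xi)}{(m-1)!}\sum_{l=0}^{m-1}\binom{m-1}{l}(\epsilon y_N)^{m-1-l}(-\hat h_\epsilon)^l + R_\epsilon.
\]
The $l=0$ term is a polynomial of degree $m-1$ in $y_N$, annihilated by $\partial_y^\alpha$. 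For $l\geq 1$, Lemma \ref{lemma: convergence b poly} gives $\partial_y^\beta \hat h_\epsilon = \epsilon^{3/2}\partial_y^\beta g + o(\epsilon^{3/2})$ with $g(y)=b(\bar y)(1+y_N)^{m+1}$, whence $\partial_y^\alpha[(\epsilon y_N)^{m-1-l}(-\hat h_\epsilon)^l]$ is of size $\epsilon^{m-1+l/2}$; combined with the prefactor $\epsilon^{1-2m}$ and the weak scaling of $\partial_y^\alpha V_\epsilon$, the $l$-th term contributes $O(\epsilon^{(l-1)/2})$. Only $l=1$ survives.

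For the $l=1$ term, a Leibniz expansion gives
\[
\partial_y^\alpha[(\epsilon y_N)^{m-2}\hat h_\epsilon] = \sum_{s=0}^{\min(\alpha_N,m-2)} \binom{\alpha_N}{s}\epsilon^{m-2}\frac{(m-2)!}{(m-2-s)!}y_N^{m-2-s}\,\partial_y^{\alpha - se_N}\hat h_\epsilon,
\]
and the multinomial identity $\binom{m}{\alpha}\binom{\alpha_N}{s}=\binom{m}{s}\binom{m-s}{\alpha-se_N}$ collapses the $\alpha$-sum into the Frobenius pairing $\binom{m}{s}D_y^{m-s}(\partial_{y_N}^s\hat v_\epsilon):D_y^{m-s}g$. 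Passing to the limit via the weak convergence of $\epsilon^{-(m-1/2)}\partial_y^\alpha V_\epsilon$ tested against the strongly convergent factor $A(\bar x) \cdot y_N^{m-2-s}/(m-2-s)!\cdot \partial_y^{\alpha-se_N}g$, and relabeling $l = m-1-s$ with $\binom{m}{m-1-l}=\binom{m}{l+1}$, yields exactly the announced formula.

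The main technical obstacle is controlling the various error terms: the Taylor remainder $R$; the $o(\epsilon^{3/2})$ corrections to $\hat h_\epsilon$; the discrepancy $A(\bar \xi) - A(\bar x) = O(\epsilon)$; and the sub-leading Faà di Bruno terms where $y$-derivatives hit $A(\bar \xi)$ rather than the polynomial-times-$\hat h_\epsilon$ factor. Each of these must be shown to be $o(1)$ using the explicit scalings of Lemma \ref{lemma: convergence b poly}, Cauchy--Schwarz, and classical trace estimates; the weak-strong convergence also requires uniform $L^2(W\times\widetilde Y)$-boundedness of the strong factor, which follows from $A \in L^2(W)$ (via the trace theorem applied to $\varphi \in W^{m,2}(\Omega)$) together with smoothness of $b$ and $g$.
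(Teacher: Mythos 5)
Your unfolding step, the scaling count showing that the $l$-th binomial term contributes $O(\eps^{(l-1)/2})$, and the combinatorial collapse onto $\binom{m}{l+1}D_y^{l+1}(\partial_{y_N}^{m-l-1}\hat v):D_y^{l+1}g$ all reproduce the correct mechanism and the correct limit. The genuine gap is in the order of operations: you Taylor-expand $\varphi$ \emph{first} and then apply $\partial_y^{\alpha}$ with $|\alpha|=m$ to the expansion term by term. For a general $\varphi\in W^{m,2}(\Omega)\cap W^{m-1,2}_0(\Omega)$ the individual terms of that differentiated expansion are not defined: every tangential component of $\alpha$ that falls on $A(\bar\xi)=\partial_{x_N}^{m-1}\varphi(\bar\xi,0)$ requires differentiating a function that is only an $H^{1/2}(W)$ trace, and $\partial_y^{\alpha}R_\eps$ would involve derivatives of $\varphi$ of total order up to $2m$ hidden in the integral remainder. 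Declaring these "sub-leading Fa\`a di Bruno terms" to be $o(1)$ presupposes they exist; only their \emph{sum} (which equals the honest $\partial_y^{\alpha}\widehat{T_\eps\varphi}$) does.

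The paper reverses the order precisely to avoid this: it first applies Fa\`a di Bruno to $\varphi\circ\Phi_\eps$ in the physical variables, so that every term is $\partial^t\varphi(\Phi_\eps)$ (an honest derivative of order $t\le m$) times derivatives of $\Phi_\eps$; it classifies the partitions (the sets $P_1(t)$, $P_2(t)$), kills the all-first-order term and the $P_2(t)$ terms by the same $\eps$-bookkeeping you use, and only \emph{then} Taylor-expands $\partial_{x_N}^{m-l}\varphi$ in the normal variable with Lagrange remainder (Lemma~\ref{lemma: auxiliary}), which needs derivatives of $\varphi$ only up to order $m$. The mixed-derivative terms that in your scheme would require differentiating $A$ are handled there by a Poincar\'e argument in the $x_N$-direction exploiting the vanishing tangential traces, and the passage from smooth to general $\varphi$ is done by an explicit density argument with $\eps$-uniform bounds. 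To salvage your route you would need exactly these two missing ingredients: a restriction to smooth $\varphi$ followed by density (with a uniform-in-$\eps$ bound on $\varphi\mapsto\int_{Q_\eps}D^mv_\eps:D^mT_\eps\varphi$ in the $W^{m,2}$-norm), and a well-defined treatment of the remainder; at that point you have essentially reconstructed the paper's Appendix lemma.
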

\begin{proof}
We set
\begin{align*}
P_1(t) &= \{ \pi = (S_1, \dotsc, S_t) \in {\rm Part}(\{1, \dotsc, m\}) : \textup{$\exists! \ S_k$ with $|S_k| > 1$} \},\\
P_2(t) &= \{ \pi \in {\rm Part}(\{1, \dotsc, m\}) : |\pi| = t, \pi \notin P_1(t) \}.
\end{align*}
We note that in the definition of $P_1(t)$ we may assume without loss of generality that the only element $S_k$ with cardinality strictly bigger than 1 is $S_1$.
In the sequel, we always assume that a given partition $\pi$ of cardinality $t$ is represented by $\pi = \{S_1, \dotsc, S_t\}$. In the following calculations,  we use the index notation and we drop the summation symbols  $\sum_{j_1,\dots , j_{|\pi |} =1}^ N  $ and $\sum_{i_1, \cdots, i_m= 1}^N$.  With the help of \eqref{eq: Faa di Bruno mdim}  we compute
\begin{equation}
\label{proof: A1}
\begin{split}
&\int_{Q_\epsilon} D^m v_{\epsilon} : D^m(T_\epsilon \varphi) \, \diff{x} = \int_{Q_\epsilon} \frac{\partial^m v_\epsilon}{\partial x_{i_1} \cdots \partial x_{i_m}} \frac{\partial^m (\varphi \circ \Phi_\epsilon)}{\partial x_{i_1} \cdots \partial x_{i_m}} \, \diff{x}\\
&= \sum_{\substack{ \pi \in {\rm Part}(\{1, \dotsc, m\})\\ \pi= \{S_1, \dotsc, S_{|\pi|}\}}} \int_{Q_\epsilon} \frac{\partial^m v_\epsilon}{\partial x_{i_1} \cdots \partial x_{i_m}} \frac{\partial^{|\pi|} \varphi}{\prod_{k=1}^{|\pi|} \partial x_{j_k}}(\Phi_\epsilon(x))\, \prod^{|\pi|}_{k=1} \frac{\partial^{|S_k|} \Phi_\epsilon^{(j_k)}}{\prod_{l \in S_k} \partial x_{i_l}} \diff{x}\\
&= \int_{Q_\epsilon} \frac{\partial^m v_\epsilon}{\partial x_{i_1} \cdots \partial x_{i_m}}  \frac{\partial^m \varphi}{\partial x_{j_1} \cdots \partial x_{j_m}}(\Phi_\epsilon(x)) \frac{\partial \Phi_\eps^{(j_1)}}{\partial x_{i_1}} \cdots \frac{\partial \Phi_\eps^{(j_m)}}{\partial x_{i_m}}    \diff{x},\\
&+ \sum_{t=1}^{m-1} \sum_{\pi \in P_1(t)} \int_{Q_\epsilon} \frac{\partial^m v_\epsilon}{\partial x_{i_1} \cdots \partial x_{i_m}} \frac{\partial^t \varphi}{\prod_{k=1}^t\partial x_{j_k}}(\Phi_\epsilon(x))\, \prod^{t}_{k=1} \frac{\partial^{|S_k|} \Phi_\epsilon^{(j_k)}}{\prod_{l \in S_k} \partial x_{i_l}} \diff{x}\\
&+ \sum_{t=2}^{m-2} F_t(v_\eps, \varphi, \Phi_\eps),
\end{split}
\end{equation}
where $F_t(v_\eps, \varphi, \Phi_\eps)$ is defined by
\[
F_t(v_\eps, \varphi, \Phi_\eps) = \sum_{\pi \in P_2(t)} \int_{Q_\eps} \frac{\partial^m v_\eps}{\partial x_{i_1} \cdots \partial x_{i_m}} \frac{\partial^{t} \varphi}{\prod_{k=1}^t \partial x_{j_k}} \prod^t_{k=1} \frac{\partial^{|S_k|} \Phi_\eps^{(j_k)}}{\prod_{l \in S_k} \partial x_{i_l}} dx.
\]
We consider separately the three summands in the right hand side of~\eqref{proof: A1}. Let us remark for future use that
\[
\frac{\partial \Phi_\epsilon^{(k)}}{\partial x_i } =
\begin{cases}
\delta_{ki}, &\textup{if $k \neq N$},\\
\delta_{Ni} - \frac{\partial h_\epsilon}{\partial x_i}, &\textup{if $k=N$},
\end{cases} \quad\quad
\frac{\partial^l \Phi_\epsilon^{(k)}}{\partial x_{i_1} \cdots \partial x_{i_l} } =
\begin{cases}
0, &\textup{if $k \neq N$},\\
-\frac{\partial^l h_\epsilon}{\partial x_{i_1} \cdots \partial x_{i_l}}, &\textup{if $k=N$}.
\end{cases}
\]
for all $2 \leq l \leq m$. Consider now the first term in the right hand side of~\eqref{proof: A1}. We unfold it by taking into account \eqref{eq: exact int formula} in order to obtain
\[
\begin{split}
&\Bigg\lvert\epsilon \int_{\hat{W}_\eps} \int_{\widetilde{Y}} \widehat{\frac{\partial^m v_\epsilon}{\partial x_{i_1} \cdots \partial x_{i_m}}} \frac{\partial^m\varphi}{\partial x_{j_1} \cdots \partial x_{j_m}} (\hat{\Phi}_\epsilon(y))\, \widehat{\frac{\partial \Phi_\epsilon^{(j_1)}}{\partial x_{i_1}}} \cdots \widehat{\frac{\partial \Phi_\epsilon^{(j_m)}}{\partial x_{i_m}}} \, dy d\bar{x} \Bigg\rvert\\
&= \epsilon^{-2m + 1} \Bigg\lvert\int_{\hat{W}_\eps} \int_{\widetilde{Y}} \frac{\partial^m \hat{v}_\epsilon}{\partial y_{i_1} \cdots \partial y_{i_m}} \frac{\partial^m\varphi}{\partial x_{j_1} \cdots \partial x_{j_m}} (\hat{\Phi}_\epsilon(y))\, \frac{\partial \widehat{\Phi}_\epsilon^{(j_1)}}{\partial y_{i_1}} \cdots\frac{\partial \widehat{\Phi}_\epsilon^{(j_m)}}{\partial y_{i_m}} \, dy d\bar{x} \Bigg\rvert\\
&\leq C \epsilon^{-m + 1} \eps^{m-1/2} \int_{\hat{W}_\eps} \int_{\widetilde{Y}} \Bigg\lvert\eps^{-m+1/2}\frac{\partial^m \hat{v}_\epsilon}{\partial y_{i_1} \cdots \partial y_{i_m}} \frac{\partial^m\varphi}{\partial x_{j_1} \cdots \partial x_{j_m}} (\hat{\Phi}_\epsilon(y))\Bigg\rvert \, dy d\bar{x} \\
&\leq C \epsilon^{1/2} \norma*{\eps^{-m+1/2}\frac{\partial^m \hat{v}_\epsilon}{\partial y_{i_1} \cdots \partial y_{i_m}}}_{L^2(\hat{W}_\eps \times \widetilde{Y})} \norma*{\frac{\partial^m\varphi}{\partial x_{j_1} \cdots \partial x_{j_m}} (\hat{\Phi}_\epsilon(y))}_{L^2(\hat{W}_\eps \times \widetilde{Y})}\\
&\leq C \eps^{1/2} \norma*{\frac{\partial^m\varphi}{\partial x_{j_1} \cdots \partial x_{j_m}} (\hat{\Phi}_\epsilon(y))}_{L^2(\hat{W}_\eps \times \widetilde{Y})} \leq C \norma*{\frac{\partial^m\varphi}{\partial x_{j_1} \cdots \partial x_{j_m}}}_{L^2(\Phi_\eps(Q_\eps))},
\end{split}
\]
which vanishes as $\epsilon \to 0$. In the first inequality we have used the fact that $ \abs*{\frac{\partial \hat{\Phi}_\epsilon^{(k)}}{\partial y_i}} \leq C \eps,
$
for sufficiently small $\eps>0$.
Let now $1 \leq t \leq m-1$ be fixed and consider
{\small
\begin{multline}
\label{proof: A3}
\sum_{\pi \in P_1(t)} \int_{Q_\epsilon} \frac{\partial^m v_\epsilon}{\partial x_{i_1} \cdots \partial x_{i_m}} \frac{\partial^t \varphi}{\prod_{k=1}^t\partial x_{j_k}}(\Phi_\epsilon(x))\, \prod^t_{k=1} \frac{\partial^{|S_k|} \Phi_\epsilon^{(j_k)}}{\prod_{l \in S_k} \partial x_{i_l}} \diff{x}\\
= \sum_{\pi \in P_1(t)} \int_{Q_\epsilon} \frac{\partial^m v_\epsilon}{\partial x_{i_1} \cdots \partial x_{i_m}} \frac{\partial^t \varphi}{\prod_{k=1}^t\partial x_{j_k}}(\Phi_\epsilon(x))\, \frac{\partial^{m-t+1} \Phi_\eps^{(j_1)}}{\prod_{l \in S_1} \partial x_{i_l}} \frac{\partial \Phi_\eps^{(j_2)}}{\partial x_{i_{S_2}}} \cdots \frac{\partial \Phi_\eps^{(j_t)}}{\partial x_{i_{S_t}}}  \diff{x},
\end{multline}
}
where to shorten the notation we have identified $S_2, \dots, S_t$ with the only element they contain. Note that if ${j_1}\neq N$ then the integral in \eqref{proof: A3} is zero. Thus, without loss of generality we set $j_1=N$. Note that we have $\frac{\partial \Phi_\eps^{(N)}}{\partial x_{i_t}} = \delta_{N {i_t}} + \frac{\partial h_\epsilon}{\partial x_{i_t}} $ and $ \abs*{\frac{\partial h_\epsilon}{\partial x_{i_t}}} \leq C \eps^{1/2}$ as $\eps \to 0$.
In order to simplify the expressions we will not write down the higher order terms in $\eps$. Hence, by setting $j_1= N$ in \eqref{proof: A3} we deduce that the lower order terms in   \eqref{proof: A3} are given by
{\small
\begin{equation}
\label{proof: A4}
\begin{split}
&\sum_{\pi \in P_1(t)} \int_{Q_\epsilon} \frac{\partial^m v_\epsilon}{\partial x_{i_1} \cdots \partial x_{i_m}} \frac{\partial^t \varphi}{ \partial x_{N}\partial x_{j_{S_2}}\cdots \partial x_{j_{S_t}}  }(\Phi_\epsilon)\, \frac{\partial^{m-t+1} \Phi_\eps^{(N)}}{\prod_{l \in S_1} \partial x_{i_l}} \delta_{i_{S_2}j_2} \cdots \delta_{i_{S_t}j_N} \diff{x} \\
&= \sum_{\pi \in P_1(t)} \int_{Q_\epsilon} \frac{\partial^t \varphi}{\partial x_{N}\partial x_{i_{S_2}}\cdots \partial x_{i_{S_t}}  }  (\Phi_\epsilon) \, \frac{\partial^m v_\epsilon}{\prod_{l \in S_1} \partial x_{i_l} \partial x_{i_{S_2}}\cdots \partial x_{i_{S_t}} }       \, \frac{\partial^{m-t+1} \Phi_\eps^{(N)}}{\prod_{l \in S_1} \partial x_{i_l}} \,\diff{x}\\
&= \mybinom{m}{t-1} \! \int_{Q_\epsilon}\! \frac{\partial^t \varphi}{ \partial x_{N} \partial x_{i_{S_2}}\cdots \partial x_{i_{S_t}}  }   (\Phi_\epsilon) \, \frac{\partial^m v_\epsilon}{\prod_{l \in S_1} \! \partial x_{i_l} \partial x_{i_{S_2}}\cdots \partial x_{i_{S_t}} }\, \frac{\partial^{m-t+1} \Phi_\eps^{(N)}}{\prod_{l \in S_1} \!\partial x_{i_l}} \,\diff{x},
\end{split}
\end{equation}
}
where in the last equality in \eqref{proof: A4} we have used the fact that each of the summands
\[
\int_{Q_\epsilon} \frac{\partial^t \varphi}{\partial x_{N}\partial x_{i_{S_2}}\cdots \partial x_{i_{S_t}}  }(\Phi_\epsilon) \, \frac{\partial^m v_\epsilon}{\prod_{l \in S_1} \partial x_{i_l} \partial x_{i_{S_2}}\cdots \partial x_{i_{S_t}}   } \, \frac{\partial^{m-t+1} \Phi_\eps^{(N)}}{\prod_{l \in S_1} \partial x_{i_l}} \,\diff{x}
\]
equals
\[
\int_{Q_\epsilon} \frac{ \partial^t\varphi   } { \partial x_{N}\partial x_{i_{S_2}}\cdots \partial x_{i_{S_t}}  }    (\Phi_\epsilon) \, D^{m-t+1}\Bigg(\frac{\partial^{t-1} v_\eps}{ \partial x_{i_{S_2}}\cdots \partial x_{i_{S_t}}   } \Bigg) : D^{m-t+1} \Phi_\eps^{(N)} \,\diff{x}.
\]
and in particular they do not depend on the choice of $\pi$ (note that the cardinality of $P_1(t)$ equals $\binom{m}{t-1}$). By unfolding the right-hand side of \eqref{proof: A4} and using the fact that $m-t+1\geq 2$ we have that
{\footnotesize
\begin{equation}
\label{proof: A6}
\begin{split}
&\binom{m}{t-1} \eps  \int_{\hat{W}_\eps} \int_{\widetilde{Y}} \frac{\partial^t \varphi}{\partial x_N \partial x_{i_{S_2}}\cdots \partial x_{i_{S_t}}   }(\hat{\Phi}_\epsilon(y)) \, \widehat{\frac{\partial^m v_\epsilon}{\prod_{l \in S_1} \partial x_{i_l}    \partial x_{i_{S_2}}\cdots \partial x_{i_{S_t}}   } }\, \widehat{\frac{\partial^{m-t+1} \Phi_\eps^{(N)}}{\prod_{l \in S_1} \partial x_{i_l}}} dy d\bar{x}\\
&= -\binom{m}{t+1} \frac{\eps}{\eps^m} \int_{\hat{W}_\eps} \int_{\widetilde{Y}} \frac{\partial^m \hat{v}_\epsilon}{\prod_{l \in S_1} \partial y_{i_l}  \partial y_{i_{S_2}}\cdots \partial y_{i_{S_t}}} \frac{\partial^{t}\varphi}{ \partial x_N \partial x_{i_{S_2}}\cdots \partial x_{i_{S_t}}   } (\hat{\Phi}_\epsilon(y)) \widehat{\frac{\partial^{m-t+1} h_\eps}{\prod_{l \in S_1} \partial x_{i_l}}} dy d\bar{x}.
\end{split}
\end{equation}}
It is easy to see that the final expression appearing in the right-hand side of \eqref{proof: A6} can be written as
{\small
\begin{multline}
\label{proof: A5}
-\binom{m}{t+1} 
\int_{\hat{W}_\eps} \int_{\widetilde{Y}} \Bigg[\eps^{-m+1/2}\frac{\partial^m \hat{v}_\epsilon}{\prod_{l \in S_1} \partial y_{i_l}  \partial y_{i_{S_2}}\cdots \partial y_{i_{S_t}}   }  \Bigg]\\
\cdot \Bigg[\frac{1}{\eps^{m-t-1}}\frac{\partial^{t}\varphi}{\partial x_{N} \partial y_{i_{S_2}}\cdots \partial y_{i_{S_t}}   } (\hat{\Phi}_\epsilon(y))\Bigg] \Bigg[ \eps^{m-t+1 -3/2}\widehat{\frac{\partial^{m-t+1} h_\eps}{\prod_{l \in S_1} \partial x_{i_l}}}\Bigg] dy d\bar{x}.
\end{multline}}
Now
\[
\eps^{-m+1/2}\frac{\partial^m \hat{v}_\epsilon}{\prod_{l \in S_1} \partial y_{i_l}  \partial y_{i_{S_2}}\cdots \partial y_{i_{S_t}}   }      \to \frac{\partial^m \hat{v}}{\prod_{l \in S_1} \partial y_{i_l}\partial y_{i_{S_2}}\cdots \partial y_{i_{S_t}}   },
\]
weakly in $L^2(\widehat{W}_\eps \times Y \times (-1,0))$ as $\eps \to 0$, by Lemma \ref{lemma: unfolding convergence poly}, and
\[
\eps^{m-t+1 -3/2}\widehat{\frac{\partial^{m-t+1} h_\eps}{\prod_{l \in S_1} \partial x_{i_l}}} \to \frac{\partial^{m-t+1} (b(\bar{y})(1+y_N)^{m+1})}{\prod_{l \in S_1} \partial y_{i_l}},
\]
in $L^\infty(\widehat{W}_\eps \times Y \times (-1,0))$ as $\eps \to 0$, by Lemma \ref{lemma: convergence b poly}. Moreover,  by Lemma \ref{lemma: auxiliary} in the Appendix it follows  that
\[
\frac{1}{\eps^{m-t-1}}\frac{\partial^{t}\varphi}{\partial x_N^{t}} (\hat{\Phi}_\epsilon(y))\to \frac{y_N^{m-t-1}}{(m-t-1)!} \frac{\partial^{m-1}\varphi}{\partial x_N^{m-1}}(\bar{x},0),
\]
and
\[
\frac{1}{\eps^{m-t-1}}\frac{\partial^{t}\varphi}{\partial x_N \partial x_{i_{S_2}}\cdots \partial x_{i_{S_t}}   }   (\hat{\Phi}_\epsilon(y)) \to 0,
\]
strongly in $L^2(W \times Y \times (-1,0))$ as $\eps \to 0$, if at least one  of the indexes $ i_{S_2}, \dots , i_{S_N} $ is not equal to $N$. Hence \eqref{proof: A5} tends to
{\small \begin{multline*}
-\binom{m}{t+1} \int_{W} \int_{Y \times (-1,0)} \frac{y_N^{m-t-1}}{(m-t-1)!} D_y^{m-t+1}\Bigg(\frac{\partial^{t-1}\hat{v}}{\partial y_N^{t-1}} \Bigg) : D_y^{m-t+1}\Big( b(\bar{y})(1+y_N)^{m+1}\Big) dy \frac{\partial^{m-1}\varphi}{\partial x_N^{m-1}}(\bar{x},0) d\bar{x}.
\end{multline*}}
By setting $m-t = l$ we recover the limiting expression in the statement. Then, in order to conclude the proof it is sufficient to prove that the integrals in $F_t(v_\eps, \varphi, \Phi_\eps)$ vanish as $\eps \to 0$.  We will show this by comparing each integral appearing in the definition of $F_t(v_\eps, \varphi, \Phi_\eps)$ with the corresponding integral of the form \eqref{proof: A3}, which is convergent as $\eps \to 0$, hence it is uniformly bounded in $\eps$. Note that  by Lemma \ref{lemma: convergence b poly}
\[
\frac{\partial^{m-t+1} \hat{\Phi}_\eps^{(j_1)}}{\prod_{l \in S_1} \partial y_{i_l}} \frac{\partial \hat{\Phi}_\eps^{(j_2)}}{\partial y_{i_{S_2}}}\cdots \frac{\partial \hat{\Phi}_\eps^{(j_{t})}}{\partial y_{i_{S_t}}} = O(\eps^{3/2 + t -1}) = O(\eps^{1/2 + t}),
\]
for all $\pi \in P_1(t)$,  whereas if we consider $\pi'  =(S_1',\dots , S_t') \in P_2(t)$ with $|S'_1| = m - t < m-t +1$ there must exists $S'_k$, $k > 1$ with $|S'_k| = 2$. Let us assume that $k=2$. Then we have
\[
\frac{\partial^{m-t} \hat{\Phi}_\eps^{(j_1)}}{\prod_{l \in S'_1} \partial y_{i_l}} \frac{\partial^2 \hat{\Phi}^{(j_2)}_\eps}{\prod_{l \in S'_2} \partial y_{i_l}}  \frac{\partial \hat{\Phi}_\eps^{(j_3)}}{\partial y_{i_{S'_3}}}\cdots \frac{\partial \hat{\Phi}_\eps^{(j_{t})}}{\partial y_{i_{S'_t}}} = O(\eps^{3/2 + t } \eps^{3/2-2} ) = O(\eps^{1 + t}),
\]
and since $\eps^{1+ t} = o\big(\eps^{1/2 + t}\big)$ as $\eps \to 0$ and the integral \eqref{proof: A3} is bounded, we deduce that the integral in $F_t(v_\eps, \varphi, \Phi_\eps)$ involving
\[
\frac{\partial^m v_\eps}{\partial x_{i_1} \cdots \partial x_{i_m}} \frac{\partial^{t} \varphi}{\prod_{k=1}^t \partial x_{j_k}} \frac{\partial^{m-t} \hat{\Phi}_\eps^{(j_1)}}{\prod_{l \in S'_1} \partial y_{i_l}} \frac{\partial^2 \hat{\Phi}^{(j_2)}_\eps}{\prod_{l \in S'_2} \partial y_{i_l}}  \frac{\partial \hat{\Phi}_\eps^{(j_3)}}{\partial y_{i_{S'_3}}}\cdots \frac{\partial \hat{\Phi}_\eps^{(j_{t})}}{\partial y_{i_{S'_t}}},
\]
for all $\pi' \in P_2(t)$ defined above, vanishes as $\eps \to 0$. By arguing in a similar way for all the terms in $F_t(v_\eps, \varphi, \Phi_\eps)$ we deduce the validity of the statement.
\end{proof}

We summarise the previous discussion in the following
\begin{theorem}
\label{thm: macroscopic limit poly}
Let $f_\epsilon \in L^2(\Omega_\epsilon)$, $f\in L^2(\Omega)$ be such that $f_\epsilon \rightharpoonup f$ in $L^2(\Omega)$. Let $g(y) = b(\bar{y})(1+y_N)^{m+1}$ for all $y \in Y \times (-1,0)$. Moreover, let us assume that $v_\epsilon\in W^{m,2}(\Omega_\epsilon) \cap W^{m-1,2}_0(\Omega_\epsilon)$ is the solution to $H_{\Omega_\epsilon,S} v_\epsilon = f_\epsilon$ for all $\eps > 0$. Then there exist $v\in W^{m,2}(\Omega) \cap W^{m-1,2}_0(\Omega)$ and a function $\hat{v}$ in the space $L^2(W,w^{m,2}_{Per_Y}(Y\times(-\infty,0)))$ such that, possibly passing to a subsequence,  $v_\epsilon\rightharpoonup v$ in $W^{m,2}(\Omega)$, $v_\epsilon \to v$ in $L^2(\R^N)$, and statements $(a)$ and $(b)$ in Lemma~\ref{lemma: unfolding convergence poly} hold. Moreover, the following integral equality holds
{\small\begin{multline} \label{conclusion theorem macro}
- \sum_{l=1}^{m-1} \binom{m}{l+1} \int_W \int_{Y \times (-1,0)} \Bigg[\frac{y_N^{l-1}}{(l-1)!} D_y^{l+1} \Bigg(\frac{\partial^{m-l-1} \hat{v}(\bar{x},y)}{\partial y_N^{m-l-1}} \Bigg): D^{l+1}_y g(y)\Bigg] \,\diff{y}\, \frac{\partial^{m-1}\varphi}{\partial x_N^{m-1}}(\bar{x}, 0)  \diff{\bar{x}} \\+\int_{\Omega} D^m v : D^m\varphi + u\varphi \,\diff{x} = \int_{\Omega} f \varphi\, \diff{x}.
\end{multline}}
for all $\varphi \in W^{m,2}(\Omega) \cap W^{m-1,2}_0(\Omega)$.
\end{theorem}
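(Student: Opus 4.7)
The plan is to start from the weak formulation $Q_{\Omega_\eps}(v_\eps, T_\eps\varphi) = (f_\eps, T_\eps\varphi)_{L^2(\Omega_\eps)}$ for every $\varphi \in V(\Omega) = W^{m,2}(\Omega) \cap W^{m-1,2}_0(\Omega)$, noting that $T_\eps\varphi \in V(\Omega_\eps)$ by construction of $\Phi_\eps$. Since $f_\eps \rightharpoonup f$ in $L^2$, the sequence $(f_\eps)$ is uniformly bounded in $L^2(\Omega_\eps)$; combined with the uniform ellipticity \eqref{def: unif_ellip} and standard coercivity of $Q_{\Omega_\eps}$ on $V(\Omega_\eps)$, this yields $\|v_\eps\|_{W^{m,2}(\Omega_\eps)} \leq M$ uniformly in $\eps$. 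Extending $v_\eps$ by the standard Sobolev extension to a fixed neighbourhood and using compactness of $W^{m,2} \hookrightarrow L^2$, I extract a subsequence with $v_\eps|_\Omega \rightharpoonup v$ in $W^{m,2}(\Omega)$ and $v_\eps \to v$ in $L^2(\R^N)$. The fact that $v \in W^{m-1,2}_0(\Omega)$ then follows from the argument already used in the proof of Theorem \ref{thm: poly strong}(ii) (considering extensions by zero and the Lipschitz character of $\partial\Omega$). Lemma \ref{lemma: unfolding convergence poly} then provides, along a further subsequence, the existence of the unfolded limit $\hat v \in L^2(W, w^{m,2}_{\textup{Per}_Y}(Y\times(-\infty,0)))$ together with convergences $(a)$ and $(b)$.

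Next, I would decompose $\Omega_\eps = (\Omega_\eps \setminus \Omega) \cup K_\eps \cup \bigl((\Omega \setminus K_\eps) \setminus Q_\eps\bigr) \cup Q_\eps$ with $K_\eps = W \times (-1,-\eps)$ and $Q_\eps = \widehat W_\eps \times (-\eps,0)$, and split the bilinear form accordingly. The integrals $\int_{\Omega_\eps \setminus \Omega} D^m v_\eps : D^m T_\eps\varphi\, dx$ and $\int_{(\Omega \setminus K_\eps)\setminus Q_\eps} D^m v_\eps : D^m T_\eps\varphi\, dx$ vanish as $\eps \to 0$, exactly as in \cite[Section 8.3]{ArrLamb}: the first because $|\Omega_\eps \setminus \Omega| \to 0$ and $T_\eps\varphi$ is uniformly bounded in $W^{m,2}$, the second because $T_\eps\varphi = \varphi$ on $\Omega \setminus Q_\eps$ and $|(\Omega\setminus K_\eps)\setminus Q_\eps| \to 0$ together with $\sup_\eps \|v_\eps\|_{W^{m,2}} < \infty$. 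On $K_\eps$, the map $T_\eps$ reduces to the identity, so by weak convergence $\int_{K_\eps} D^m v_\eps : D^m\varphi\, dx \to \int_\Omega D^m v : D^m\varphi\, dx$.

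The remaining contribution $\int_{Q_\eps} D^m v_\eps : D^m T_\eps\varphi \, dx$ is precisely what Proposition \ref{prop: macro limit} identifies: its limit is the double integral featuring $\hat v$ and $g(y)=b(\bar y)(1+y_N)^{m+1}$ in the first line of \eqref{conclusion theorem macro}. The zero-order and source terms are handled by passing to the limit using strong $L^2$ convergence $v_\eps \to v$, weak $L^2$ convergence $f_\eps \rightharpoonup f$, and the fact that $T_\eps\varphi \to \varphi$ in $L^2(W\times(-1,0))$ (which follows from the smoothness of $\varphi$ and $\|h_\eps\|_\infty \to 0$, cf.\ Lemma \ref{lemma: h_eps}): one obtains $\int_{\Omega_\eps} v_\eps T_\eps\varphi\, dx \to \int_\Omega v\varphi\, dx$ and $\int_{\Omega_\eps} f_\eps T_\eps\varphi\, dx \to \int_\Omega f\varphi\, dx$. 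Summing the four contributions yields exactly \eqref{conclusion theorem macro}.

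The main obstacle is concentrated in the $Q_\eps$-integral, and it has already been overcome in Proposition \ref{prop: macro limit}: the subtlety there is the combinatorial bookkeeping of the Faà di Bruno expansion of $D^m(\varphi \circ \Phi_\eps)$, identifying which partitions $\pi \in \textup{Part}(m)$ yield contributions of lower order in $\eps$ (the class $P_1(t)$) and which are negligible (the class $P_2(t)$), and then pairing the correct powers of $\eps$ produced by Lemmas \ref{lemma: unfolding convergence poly}, \ref{lemma: convergence b poly} and Lemma \ref{lemma: auxiliary} so that the critical scaling $\eps^{m-1/2}$ of $D^m \hat v_\eps$ matches the scaling of the derivatives of $h_\eps$ and of $T_\eps\varphi$. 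Once this critical bookkeeping is in place, the present theorem is an assembly step.
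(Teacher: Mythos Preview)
Your proposal is correct and follows the same route as the paper, which presents Theorem~\ref{thm: macroscopic limit poly} explicitly as a summary of the preceding discussion together with Proposition~\ref{prop: macro limit}. One small inaccuracy is worth flagging: you justify the vanishing of $\int_{(\Omega\setminus K_\eps)\setminus Q_\eps} D^m v_\eps : D^m T_\eps\varphi\,dx$ by asserting that $T_\eps\varphi=\varphi$ on $\Omega\setminus Q_\eps$, but $\Phi_\eps$ is the identity only on $K_\eps=W\times(-1,-\eps)$; on the residual strip $(\Omega\setminus K_\eps)\setminus Q_\eps=(W\setminus\widehat W_\eps)\times(-\eps,0)$ the map $\Phi_\eps$ is non-trivial and $D^m T_\eps\varphi$ is not uniformly bounded (recall $\|D^m h_\eps\|_\infty\sim\eps^{3/2-m}$). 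The paper handles this piece exactly as you do elsewhere, by deferring to \cite[Section~8.3]{ArrLamb}, where the smallness of $|W\setminus\widehat W_\eps|$ combined with the decay of low-order derivatives of $\varphi$ near $W\times\{0\}$ is used; your measure-plus-boundedness shortcut does not suffice here.
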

\vspace{6pt}
\noindent\textit{Notation.} We will use the following notation:
\[ 
q_Y(f, g) :=  \sum_{l=1}^{m-1} \binom{m}{l+1} \int_{Y \times (-1,0)} \Bigg[\frac{y_N^{l-1}}{(l-1)!} D_y^{l+1} \Bigg(\frac{\partial^{m-l-1} f(\bar{x},y)}{\partial y_N^{m-l-1}} \Bigg) : D^{l+1}_y g(y)\Bigg] \,\diff{y}
\]
for all $f\in L^2(W,w^{m,2}_{Per_Y}(Y\times(-\infty,0)))$, $g \in C^m_{Per_Y}(Y \times (-1,0))$. We refer to 
\begin{equation} \label{strange term poly}
-\int_W q_Y(\hat{v}, g) \frac{\partial^{m-1}\varphi}{\partial x_N^{m-1}}(\bar{x}, 0)  \diff{\bar{x}}
\end{equation}
 as the   \textit{strange term} appearing in the homogenization.


\subsection{Critical case - Microscopic problem.}
The aim of this section is to characterize the strange term \eqref{strange term poly} as the energy of a suitable polyharmonic function  and in particular  to conclude that it is different from zero. We will use periodically oscillating test functions matching the intrinsic $\eps$-scaling of the problem.\\
Let then $\psi \in C^{\infty}(\overline{W} \times \overline{Y} \times ]-\infty, 0])$ be such that $\supp{\psi} \subset C \times \overline{Y} \times [d,0]$ for some compact set $C\subset W$ and for some $d\in (-\infty, 0)$. Moreover, assume that $\psi(\bar{x},\bar{y},0) = D^l\psi(\bar{x},\bar{y},0) = 0$ for all $(\bar{x},\bar{y})\in W \times Y$, for all $1 \leq l \leq m-2$ . Let also $\psi$ be $Y$-periodic in the variable $\bar{y}$. We set
\[
\psi_\epsilon(x) = \epsilon^{m-\frac{1}{2}} \psi \Big(\bar{x},\frac{\bar{x}}{\epsilon}, \frac{x_N}{\epsilon}\Big),
\]
for all $\epsilon >0$, $x \in W \times ]-\infty, 0]$. Then $T_\epsilon \psi_\epsilon \in V(\Omega_\epsilon)$ for all sufficiently small $\epsilon$, hence we can use it as a test function in the weak formulation of the problem in $\Omega_\epsilon$, getting
\[
\int_{\Omega_\epsilon} D^m v_\epsilon : D^m T_\epsilon \psi_\epsilon\,\diff{x} + \int_{\Omega_\epsilon} v_\epsilon T_\epsilon \psi_\epsilon \,\diff{x} = \int_{\Omega_\epsilon} f_\epsilon T_\epsilon \psi_\epsilon \,\diff{x}.
\]
It is not difficult to prove that
\begin{equation}
\label{eq: micro poly vanish}
\int_{\Omega_\eps} v_\eps T_\eps \psi_\eps \, \diff{x} \to 0, \quad \quad \int_{\Omega_\eps} f_\eps T_\epsilon \psi_\epsilon \,\diff{x} \to 0
\end{equation}
as $\eps \to 0$. By arguing as in \cite[\S 8.4]{ArrLamb}, it is also possible to prove that
\begin{equation}
\label{eq: micro poly vanish 2}
\int_{\Omega_\eps \setminus \Omega} D^m v_\eps : D^m T_\epsilon \psi_\epsilon\,\diff{x} \to 0,
\end{equation}
as $\eps \to 0$. Moreover, a suitable modification of \cite[Lemma 8.47]{ArrLamb} yields
\begin{equation}
\label{eq: micro poly conv}
\int_\Omega D^m v_\epsilon : D^m T_\epsilon \psi_\epsilon\,\diff{x} \to \int_{W\times Y\times (-\infty,0)} D_y^m \hat{v}(\bar{x},y) : D_y^m \psi(\bar{x},y)\,\diff{\bar{x}} \diff{y}.
\end{equation}

\begin{theorem}
\label{thm: conditions on v poly}
Let $\hat{v} \in L^2(W,w^{m,2}_{Per_Y}(Y\times(\infty,0)))$ be the function from Theorem~\ref{thm: macroscopic limit poly}. Then
\begin{equation}
\label{eq: variational v hat poly}
\int_{W\times Y \times (-\infty,0)} D^m_y\hat{v}(\bar{x},y) : D^m_y\psi (\bar{x},y)\,\, \diff{\bar{x}}\, \diff{y} = 0,
\end{equation}
for all $\psi \in L^2(W,w^{m,2}_{Per_Y}(Y\times(\infty,0)))$ such that $\psi(\bar{x},\bar{y},0) = D^l_y\psi(\bar{x},\bar{y},0) = 0$ for all $(\bar{x},\bar{y})\in W \times Y$, for all $1 \leq l \leq m-2$. Moreover, for any $j=1,\dotsc, N-1$, we have
\begin{equation}
\label{eq: Casado1 poly}
\frac{\partial^{m-1}\hat{v}}{\partial y_j \partial y^{m-2}_N}(\bar{x}, \bar{y},0) = - \frac{\partial b}{\partial y_j}(\bar{y}) \frac{\partial^{m-1} v}{\partial x^{m-1}_N}(\bar{x}, 0), \quad\quad \textup{on $W \times Y$},
\end{equation}
and
\begin{equation}
\label{eq: Casado2 poly}
\frac{\partial^{m-1}\hat{v}}{\partial y_{i_1} \cdots \partial y_{i_{m-1}}}(\bar{x}, \bar{y},0) = 0, \quad\quad \textup{on $W \times Y$},
\end{equation}
for all $i_1,\dotsc, i_{m-1} = 1,\dots, N-1$.
\end{theorem}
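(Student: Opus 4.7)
My plan is to split the argument into two parts: first establish the variational identity \eqref{eq: variational v hat poly}, and then derive the trace relations \eqref{eq: Casado1 poly} and \eqref{eq: Casado2 poly}.

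For \eqref{eq: variational v hat poly} I would insert the oscillating test function $T_{\epsilon}\psi_{\epsilon}$ (introduced just before the statement) into the weak formulation $H_{\Omega_{\epsilon},S}v_{\epsilon}=f_{\epsilon}$. The support and vanishing properties imposed on $\psi$ ensure that $T_{\epsilon}\psi_{\epsilon}\in V(\Omega_{\epsilon})$, so the substitution is admissible. Combining \eqref{eq: micro poly vanish}, which disposes of both the $L^2$-term and the right-hand side in the limit, with \eqref{eq: micro poly vanish 2}, which removes the contribution on $\Omega_{\epsilon}\setminus\Omega$, and then \eqref{eq: micro poly conv} to identify the limit of the principal quadratic form as an integral over $W\times Y\times(-\infty,0)$ of $D^m_y\hat v:D^m_y\psi$, the equality \eqref{eq: variational v hat poly} is obtained for every smooth $\psi$ with the required trace and support properties. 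A density argument, based on a Poincar\'{e}--Wirtinger-type inequality in $w^{m,2}_{\mathrm{Per}_Y}(Y\times(-\infty,0))$ in the same spirit as the one used in the proof of Lemma~\ref{lemma: unfolding convergence poly}, then extends the identity to the entire admissible class.

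For \eqref{eq: Casado1 poly} and \eqref{eq: Casado2 poly} my strategy is to adapt to the critical scaling $\alpha=3/2$ the Casado--Diaz-type argument already used in the proof of Theorem~\ref{thm: poly strong}(ii). Since $v_{\epsilon}\in W^{m-1,2}_0(\Omega_{\epsilon})$, one has $D^{\ell}v_{\epsilon}=0$ on $\partial\Omega_{\epsilon}$ for every $\ell\leq m-2$. Writing $\partial_{x_N}^{m-2}v_{\epsilon}(\bar x,g_{\epsilon}(\bar x))=0$ and differentiating tangentially in $x_j$ produces the pointwise identity
\[
\frac{\partial^{m-1}v_{\epsilon}}{\partial x_j\,\partial x_N^{m-2}}(\bar x,g_{\epsilon}(\bar x)) + \epsilon^{1/2}\frac{\partial b}{\partial y_j}\Big(\frac{\bar x}{\epsilon}\Big)\,\frac{\partial^{m-1}v_{\epsilon}}{\partial x_N^{m-1}}(\bar x,g_{\epsilon}(\bar x))=0,
\]
and analogous identities follow from tangential differentiation of the remaining components of $D^{m-2}v_{\epsilon}|_{\partial\Omega_{\epsilon}}=0$. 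Unfolding and rescaling by $\epsilon^{m-1/2}$ transforms these into pointwise identities on the moving surface $\{y_N=\epsilon^{1/2}b(\bar y)\}$ relating the traces of the rescaled derivatives of $V_{\epsilon}$. Applying Lemma~4.3 of \cite{CasDiaz} to divergence-free vector fields of the same type as the $V_{\epsilon}^i$ used in the proof of Theorem~\ref{thm: poly strong}(ii), the limit as $\epsilon\to 0$ of these products of weakly convergent sequences is identified and gives \eqref{eq: Casado1 poly} on $W\times Y$. The fully-tangential case, where all $m-1$ indices lie in $\{1,\ldots,N-1\}$, produces only an $O(\epsilon)$ right-hand side in the analogous pointwise identity; hence after rescaling the limit trace vanishes and \eqref{eq: Casado2 poly} follows.

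The main obstacle is the second part: the coupling between $\partial^{m-1}v/\partial x_N^{m-1}$ and $\hat v$ appearing in \eqref{eq: Casado1 poly} is the result of a delicate balance between the $\epsilon^{1/2}$ produced by $\partial_{x_j}g_{\epsilon}$ at the critical scaling $\alpha=3/2$ and the $\epsilon^{-(m-1/2)}$ rescaling of the microscopic profile, a balance that is lost for $\alpha<3/2$ (where the coupling collapses and one just recovers $\partial^{m-1}v/\partial x_N^{m-1}=0$). Ensuring that the divergence-free vector fields to which the Casado--Diaz lemma is applied have all their derivatives controlled uniformly in $L^2$, and tracking the Fa\`a di Bruno contributions of the tangential differentiation along the oscillating boundary $\{x_N=g_{\epsilon}(\bar x)\}$ so as to extract exactly the terms in \eqref{eq: Casado1 poly} and \eqref{eq: Casado2 poly}, is the most technical step. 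The first part, by contrast, is essentially routine once Proposition~\ref{prop: macro limit} and the convergences \eqref{eq: micro poly vanish}--\eqref{eq: micro poly conv} are available.
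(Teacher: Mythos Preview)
Your approach is essentially that of the paper: the variational identity \eqref{eq: variational v hat poly} is obtained exactly as you describe, by inserting $T_\epsilon\psi_\epsilon$ into the weak formulation and invoking \eqref{eq: micro poly vanish}--\eqref{eq: micro poly conv} followed by density; and the trace relations are derived by tangentially differentiating the vanishing of $D^{m-2}v_\epsilon$ on $\Gamma_\epsilon$, packaging the resulting identities as $V_\epsilon^j\cdot n_\epsilon=0$ for the same divergence-free fields $V_\epsilon^j$ you mention, and applying \cite[Lemma~4.3]{CasDiaz} together with Lemma~\ref{lemma: unfolding convergence poly}.

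There is, however, one imprecision in your handling of \eqref{eq: Casado2 poly}. The fully tangential identity does \emph{not} carry an $O(\epsilon)$ right-hand side: tangential differentiation of $\partial^{m-2}_{x_{i_1}\cdots x_{i_{m-2}}}v_\epsilon(\bar x,g_\epsilon(\bar x))=0$ in $x_j$ still produces a term $\epsilon^{1/2}\partial_{y_j}b\cdot\partial_{x_N}\partial^{m-2}_{x_{i_1}\cdots x_{i_{m-2}}}v_\epsilon$ with the critical $\epsilon^{1/2}$ scaling, so after the Casado--Diaz procedure the limiting right-hand side is $-\partial_{y_j}b(\bar y)\,\partial^{m-1}v/\partial x_N\partial x_{i_1}\cdots\partial x_{i_{m-2}}(\bar x,0)$, which is a priori nonzero. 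The paper closes the argument differently: since $v\in W^{m-1,2}_0(\Omega)$, one has $D^{m-2}v(\bar x,0)=0$ on $W$, hence every $(m-1)$-th order derivative of $v$ at $x_N=0$ containing at least one tangential index vanishes, and it is \emph{this} that forces the right-hand side to zero and yields \eqref{eq: Casado2 poly}. With this correction your sketch matches the paper.
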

\begin{proof}
The first part of the statement follows from \eqref{eq: micro poly vanish}, \eqref{eq: micro poly vanish 2} and \eqref{eq: micro poly conv} by arguing as in \cite[Theorem 8.53]{ArrLamb}. In order to prove formulas~\eqref{eq: Casado1 poly} and~\eqref{eq: Casado2 poly} we note that, since $D^{m-2} v_\epsilon(\bar{x},g_\epsilon(\bar{x}))= 0$ for all $\bar{x}\in W$, we have
\[
\frac{\partial^{m-2} v_\epsilon}{\partial x_{i_1}\cdots \partial x_{i_{m-2}}} (\bar{x},g_\epsilon(\bar{x})) = 0, \quad \textup{ for all $i_1, \dotsc, i_{m-2} = 1,\dots,N$, $\bar{x}\in W$}.
\]
Differentiating with respect to $x_j$, $j \in \{1,\dots, N-1 \}$ yields
\[
\frac{\partial^{m-1} v_\epsilon}{\partial x_{i_1}\cdots \partial x_{i_{m-2}}\partial x_j } (\bar{x},g_\epsilon(\bar{x})) + \frac{\partial^{m-1} v_\epsilon}{\partial x_{i_1}\cdots \partial x_{i_{m-2}} \partial x_N} (\bar{x},g_\epsilon(\bar{x})) \frac{\partial g_\epsilon(\bar{x})}{\partial x_j} = 0,
\]
for all $\bar{x}\in W$. Hence, by setting
\[
V_\epsilon^{j} = \Big(0,\dots,0,-\frac{\partial^{m-1} v_\epsilon}{ \partial x_N\partial x_{i_1}\cdots \partial x_{i_{m-2}}}, 0,\dots,0, \frac{\partial^{m-1} v_\epsilon}{\partial x_j \partial x_{i_1}\cdots \partial x_{i_{m-2}}}\Big),
\]
for all $i_1,\dotsc, i_{m-2} = 1,\dotsc, N$, $j=1,\dotsc,N-1$, where the only non-zero entries are the $j$-th and the $N$-th, we obtain that
$
V_\epsilon^{j} \cdot n_\epsilon = 0$,  on $\Gamma_\epsilon$,
where $n_\epsilon$ is the outer normal to $\Gamma_\epsilon \equiv \{(\bar{x}, g_\eps(\bar{x})) : \bar{x} \in W\}$. By using Lemma~\ref{lemma: unfolding convergence poly}
\[\frac{
   \frac{\widehat{\partial^{m-1} v_\epsilon}}{\partial x_{i_1} \cdots \partial x_{i_{m-2}} \partial x_j}
   -\int_Y  \frac{\widehat{\partial^{m-1} v_\epsilon}}{\partial x_{i_1} \cdots \partial x_{i_{m-2}} \partial x_j} (\bar x, \bar y, 0)d\bar y
     }
 { \sqrt{\epsilon}   }
  \overset{\epsilon \to 0}{\rightharpoonup} \frac{\partial^{m-1} \hat{v}}{\partial y_{i_1}\cdots \partial y_{i_{m-2}}\partial y_j },
\]
in $L^2(W \times Y \times ]d, 0[)$ for any $d<0$. This combined with \cite[Lemma 4.3]{CasDiaz} (see also \cite[Lemma~8.56]{ArrLamb}) yields
\[
\frac{\partial^{m-1}\hat{v}}{\partial y_{i_1}\cdots \partial y_{i_{m-2}}\partial y_j}(\bar{x}, \bar{y},0) = - \frac{\partial b}{\partial y_j}(\bar{y}) \frac{\partial^{m-1} v}{\partial x_N \partial x_{i_1} \cdots \partial x_{i_{m-2}}}(\bar{x}, 0),  
\]
for all $(\bar{x}, \bar{y}) \in W \times Y$, $i_1,\dotsc,i_{m-2}=1,\dots,N$, $j=1,\dots,N-1$. We deduce that since $v \in W^{m,2}(\Omega) \cap W^{m-1,2}_0(\Omega)$, then $D^{m-2}\, v(\bar{x},0) = 0$ for all $x \in W$. This implies that all the derivatives
$
\frac{\partial^{m-1} v}{\partial x_N \partial x_{i_1} \cdots \partial x_{i_{m-2}}}(\bar{x}, 0),
$
where one of the indexes $i_k$ is different from $N$, are zero. This concludes the proof.
\end{proof}

Now we have the following
\begin{lemma}
\label{lemma: V poly}
There exists $V\in w^{m,2}_{Per_Y}(Y \times (-\infty,0))$ satisfying the equation
\begin{equation}
\label{eq: variational V poly}
\int_{Y \times (-\infty,0)} D^m V : D^m \psi \,\,\diff{y} = 0,
\end{equation}
for all $\psi \in w^{m,2}_{Per_Y}(Y \times (-\infty,0))$ such that $D^l \psi(\bar{y},0) = 0$ on $Y$, for all\\ $0 \leq l \leq m-2$, and the boundary conditions
\[
\begin{cases}
\frac{\partial^{l} V}{\partial y_N^{l}}(\bar{y},0) = 0,\ \ {\rm for\ all\ } l=0,\dots , m-3, &\textup{on $Y$},\\
\frac{\partial^{m-2} V}{\partial y_N^{m-2}}(\bar{y},0) = b(\bar{y}), &\textup{on $Y$}.
\end{cases}
\]
The function $V$ is unique up to the sum of a monomial  in $y_N$ of degree $m-1$ of the type $ay_N^{m-1}$ with $a\in \R$. Moreover $V \in W^{2m,2}_{Per_Y}(Y \times (d,0))$ for any $d < 0$ and
it satisfies the equation
\[
(-\Delta)^m V = 0, \quad\quad\textup{in $Y \times (d,0) $},
\]
subject to the boundary conditions
\[
\begin{cases}
\frac{\partial^l V}{\partial n^l } (\bar{y},0) = 0, &\textup{on $Y$, for all $0 \leq l \leq m-3$,}\\
\frac{\partial^{m-2} V}{\partial y_N^{m-2}}(\bar{y},0) = b(\bar{y}), &\textup{on $Y$,}\\
\frac{\partial^m V}{\partial y_N^m} (\bar{y},0) = 0,  &\textup{on $Y $}.
\end{cases}
\]
\end{lemma}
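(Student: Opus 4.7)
The plan is to recast the problem in a Lax--Milgram setting after a suitable lifting of the inhomogeneous boundary datum, and then recover the strong formulation (the polyharmonic equation plus the natural boundary condition $\partial_{y_N}^m V = 0$) via interior elliptic regularity together with the Polyharmonic Green formula from Theorem~\ref{PGF: flat case}. Throughout, I write $H := Y\times(-\infty,0)$.

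\textbf{Step 1: Lifting the boundary data.} Pick a cutoff $\eta\in C^\infty_c(\R)$ with $\eta\equiv 1$ in a neighbourhood of $0$ and $\supp\eta\subset (-1,1)$, and set $W_0(\bar y,y_N)=\tfrac{y_N^{m-2}}{(m-2)!}b(\bar y)\,\eta(y_N)$. Since $b\in C^\infty(\overline Y)$ is $Y$-periodic, $W_0\in W^{m,2}_{{\rm Per}_Y}(H)$ has compact support in $y_N$ and satisfies $\partial_{y_N}^l W_0(\bar y,0)=\delta_{l,m-2}\,b(\bar y)$ for all $0\le l\le m-2$. Hence any solution $V$ will be sought in the form $V=W_0+U$, where $U$ has vanishing derivatives of order $\le m-2$ at $y_N=0$.

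\textbf{Step 2: Existence via Lax--Milgram.} Let $\mathcal H$ be the completion of
\begin{equation*}
\{\psi\in C^\infty_{{\rm Per}_Y}(\overline H):\psi\text{ has compact support in }y_N,\ \partial_{y_N}^l\psi(\bar y,0)=0\text{ for }0\le l\le m-2\}
\end{equation*}
with respect to the seminorm $\|D^m\cdot\|_{L^2(H)}$. Its kernel contains exactly the polynomials of degree $\le m-1$ that lie in the space, and the boundary conditions plus $Y$-periodicity force this kernel to reduce to $\R\, y_N^{m-1}$; thus $\mathcal H/\R\, y_N^{m-1}$ is a Hilbert space under the induced norm. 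The bilinear form $a(U,\psi)=\int_H D^mU:D^m\psi\,dy$ is continuous and coercive by construction, and the linear functional $\psi\mapsto -\int_H D^mW_0:D^m\psi\,dy$ is bounded (since $D^m W_0\in L^2(H)$). Lax--Milgram then produces a unique $U\in\mathcal H/\R\, y_N^{m-1}$ and we set $V:=U+W_0$, which belongs to $w^{m,2}_{{\rm Per}_Y}(H)$ and satisfies \eqref{eq: variational V poly} together with the boundary conditions.

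\textbf{Step 3: Uniqueness modulo $\R\, y_N^{m-1}$.} If $V_1,V_2$ are two solutions, $V:=V_1-V_2$ is itself an admissible test function in \eqref{eq: variational V poly} (its derivatives up to order $m-2$ vanish at $y_N=0$). Testing with $\psi=V$ yields $\int_H|D^mV|^2\,dy=0$, so $D^mV\equiv 0$ and $V$ is a polynomial of total degree $\le m-1$. Periodicity in $\bar y$ forces independence of $\bar y$, and the boundary conditions $\partial_{y_N}^l V(\bar y,0)=0$ for $0\le l\le m-2$ then leave only multiples of $y_N^{m-1}$.

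\textbf{Step 4: Regularity and strong form.} Testing \eqref{eq: variational V poly} against $\psi\in C^\infty_c(H)$ extended periodically gives $\int_H V\,(-\Delta)^m\psi\,dy=0$, so $(-\Delta)^mV=0$ in $H$ in the distributional sense. Standard interior and up-to-the-flat-boundary regularity for the polyharmonic operator (the coefficients are constant, the boundary is flat, and the prescribed Dirichlet data $\partial_{y_N}^lV(\bar y,0)=\delta_{l,m-2}b(\bar y)$ for $l\le m-2$ are smooth) yield $V\in C^\infty(\overline Y\times(d,0])$ and in particular $V\in W^{2m,2}_{{\rm Per}_Y}(Y\times(d,0))$ for every $d<0$. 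Finally, applying the Polyharmonic Green formula (Theorem~\ref{PGF: flat case}) on $Y\times(d,0)$ -- the lateral boundary terms vanish by $Y$-periodicity and the bottom terms vanish by choosing test functions with compact $y_N$-support -- the vanishing of derivatives of $\psi$ of order $\le m-2$ at $y_N=0$ kills all boundary contributions except the one for $t=m-1$, which reduces to $\int_Y B_{m-1}(V)\,\partial_{y_N}^{m-1}\psi\,d\bar y$ with $B_{m-1}(V)=\partial_{y_N}^mV$. Since $\partial_{y_N}^{m-1}\psi(\bar y,0)$ is an arbitrary smooth $Y$-periodic function, we conclude $\partial_{y_N}^mV(\bar y,0)=0$ on $Y$.

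\textbf{Main obstacle.} The delicate point is the setup of the Hilbert space $\mathcal H$ and the proof that its kernel is exactly $\R\, y_N^{m-1}$: one needs a Poincar\'e-type argument along the $y_N$-direction to show that on the subspace of functions whose derivatives up to order $m-2$ vanish at $y_N=0$, the seminorm $\|D^m\cdot\|_{L^2(H)}$ together with the boundary trace conditions controls all lower-order derivatives locally in $y_N$, so that limits of Cauchy sequences retain both the periodicity and the prescribed boundary behaviour at $y_N=0$; everything else is a routine adaptation of classical elliptic technology to the half-strip $H$.
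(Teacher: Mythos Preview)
Your proof is correct and follows essentially the same route as the paper: the paper's argument is a one-line reference to \cite[Lemma~8.60]{ArrLamb} for the existence/uniqueness part, followed by the Polyharmonic Green Formula~\eqref{polyharmonic Green formula} to read off $(-\Delta)^mV=0$ and the natural condition $\partial_{y_N}^m V=0$, and your write-up simply makes the lifting plus Lax--Milgram construction explicit. The only cosmetic wrinkle is in Step~2, where on the space of compactly supported smooth functions the seminorm $\|D^m\cdot\|_{L^2(H)}$ is already a norm (nonzero polynomials have no compact support), so the quotient by $\R\,y_N^{m-1}$ should really be taken on $w^{m,2}_{{\rm Per}_Y}(H)$ with the boundary constraints rather than on the pre-completion; your Step~3 shows you understand this, so it is an exposition issue rather than a gap.
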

\begin{proof}
Similar to the proof of \cite[Lemma 8.60]{ArrLamb}. We just note that in order to deduce the classical formulation of problem \eqref{eq: variational V poly} it is sufficient to choose test functions $\psi$ as in the statement with bounded support in the $y_N$ direction. By using the Polyharmonic Green Formula  \eqref{polyharmonic Green formula} we then deduce that
{\small
\[
\int_{Y \times (-\infty,0)} D^m V : D^m \psi \,\diff{y} = (-1)^m \int_{Y \times (-\infty,0)} \Delta^m V \psi \,\diff{y} + \int_Y \frac{\partial^m V}{\partial y_N^m} \frac{\partial^{m-1} \psi}{\partial y_N^{m-1}}\, d\bar{y}.
\]}
By the arbitrariness of $\psi$ it is then easy to conclude the proof.
\end{proof}

\begin{theorem}
\label{thm: strange term poly}
Let $V$ be as in Lemma \ref{lemma: V poly} and $g(y) = b(\bar{y})(1 + y_N)^{m+1}$, for all $y \in Y \times (-1, 0)$. Then
\begin{equation}
\label{proof: strange term poly}
q_Y(V, g) = \int_{Y\times (-\infty,0)} |D^m V|^2\,\diff{y}.
%
\end{equation}
Furthermore
\begin{equation}
\label{eq: D^m v boundary equality}
\int_{Y\times (-\infty,0)} |D^m V|^2\,\diff{y} = - \int_{Y} \Bigg(\frac{\partial^{m-1} (\Delta V)}{\partial x_N^{m-1}} + (m-1) \Delta_{N-1} \bigg( \frac{\partial^{m-1} V}{\partial x_N^{m-1}} \bigg)   \Bigg)  b(\bar{y}) \,\diff{\bar{y}}.
\end{equation}
\end{theorem}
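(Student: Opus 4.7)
My plan is to prove the two equalities separately. The boundary identity \eqref{eq: D^m v boundary equality} is straightforward from the Polyharmonic Green Formula and will also supply the target for the first equality, which is the more substantial part.

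\medskip

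\noindent\textbf{Boundary formula.} I will apply the Polyharmonic Green Formula (Theorem~\ref{PGF: flat case}), suitably adapted to the periodic half-strip $Y \times (-\infty, 0)$ via cutoffs in $y_N$ (justified by $\|D^m V\|_{L^2}<\infty$ and the interior regularity $V \in W^{2m,2}_{{\rm Per}_Y}(Y \times (d,0))$ granted by Lemma~\ref{lemma: V poly}), to $\int D^m V : D^m V\,dy$ with both arguments equal to $V$. The bulk term $\int V(-\Delta)^m V\,dy$ vanishes since $(-\Delta)^m V=0$. In the boundary sum $\sum_{t=0}^{m-1}\int_Y B_t(V)\partial_{y_N}^t V\,d\bar y$, the conditions from Lemma~\ref{lemma: V poly} eliminate all the terms with $t\le m-3$ (because $\partial_{y_N}^t V(\bar y,0)=0$) and the term $t=m-1$ (because $B_{m-1}(V)=\partial_{y_N}^m V(\bar y,0)=0$); only the $t=m-2$ term survives, giving $\int_Y B_{m-2}(V)\,b(\bar y)\,d\bar y$. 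Computing $B_{m-2}(V)$ from the definition (only $l=m-2$ and $l=m-1$ contribute, each with sign $(-1)^{m-(m-2)-1}=-1$) yields $B_{m-2}(V)=-\partial_{y_N}^{m-1}(\Delta V)-(m-1)\Delta_{N-1}(\partial_{y_N}^{m-1} V)$, which is precisely~\eqref{eq: D^m v boundary equality}.

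\medskip

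\noindent\textbf{First equality.} My plan is to show that $q_Y(V,g)$ also equals the common boundary integral $\int_Y B_{m-2}(V)\,b\,d\bar y$, so that the identity with $\int|D^m V|^2\,dy$ follows from the previous step. The critical structural feature is that $g(y)=b(\bar y)(1+y_N)^{m+1}$ vanishes at $y_N=-1$ together with all its $y_N$-derivatives up to order $m$, so extending $g$ by zero to $y_N\le -1$ produces an element of $W^{m,\infty}_{{\rm Per}_Y}(Y\times(-\infty,0))$ and no boundary contribution can arise at $y_N=-1$. In each summand $T_l$ of $q_Y(V,g)$ I will integrate by parts repeatedly in $y_N$, transferring the $\partial_{y_N}^{m-l-1}$ derivatives from $V$ onto the product $\frac{y_N^{l-1}}{(l-1)!}D^{l+1}g$; the weight $y_N^{l-1}/(l-1)!$ vanishes at $y_N=0$ together with its $y_N$-derivatives of order $<l-1$, which severely restricts the surviving boundary contributions there. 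The binomial coefficients $\binom{m}{l+1}$ and polynomial weights (which arose in Proposition~\ref{prop: macro limit} from the Taylor expansion of $\varphi\circ\Phi_\eps$ established in Lemma~\ref{lemma: auxiliary}) are set up precisely so that, after summing over $l=1,\dots,m-1$, the boundary terms telescope to $\int_Y B_{m-2}(V)\,b\,d\bar y$, while the interior contributions cancel by combining $(-\Delta)^m V=0$ with the polynomial structure of $g$ in $y_N$.

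\medskip

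\noindent\textbf{Main obstacle and alternative.} The hardest step is the combinatorial bookkeeping in the integration-by-parts computation for $q_Y(V,g)$: verifying simultaneously the interior cancellations and the telescoping of boundary contributions from the $m-1$ summands $T_l$. A cleaner alternative uses the variational identity~\eqref{eq: variational V poly} directly: any $\Psi\in w^{m,2}_{{\rm Per}_Y}(Y\times(-\infty,0))$ sharing the boundary data of $V$ (i.e.\ $\partial_{y_N}^l\Psi(\bar y,0)=\partial_{y_N}^l V(\bar y,0)$ for $l=0,\dots,m-2$) satisfies $\int D^m V:D^m\Psi\,dy=\int|D^m V|^2\,dy$, so it would suffice to realise $q_Y(V,g)$ as $\int D^m V:D^m\Psi$ for a suitable $\Psi$ built from $g$ and a cutoff in $y_N$. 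If a fully direct construction proves elusive, the identity can instead be obtained \emph{a posteriori} from the self-consistency of Theorem~\ref{thm: macroscopic limit poly}: using linearity of $q_Y$ in the first argument and the decomposition $\hat v=-\partial_{x_N}^{m-1}v(\bar x,0)\,V$ (a consequence of Theorem~\ref{thm: conditions on v poly} and the uniqueness clause of Lemma~\ref{lemma: V poly}), the macroscopic strange term $-\int_W q_Y(\hat v,g)\partial_{x_N}^{m-1}\varphi\,d\bar x$ must match the microscopic energy $\int_{W\times Y\times(-\infty,0)}|D_y^m\hat v|^2\,d\bar x\,dy$, forcing $q_Y(V,g)=\int|D^m V|^2\,dy$.
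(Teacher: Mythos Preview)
Your proof of the boundary formula \eqref{eq: D^m v boundary equality} is correct and in fact slightly cleaner than the paper's: you apply the Polyharmonic Green Formula directly to $\int D^m V:D^m V$, whereas the paper applies it to $\int D^m V:D^m\phi$ for an auxiliary function $\phi$; both lead to the same $\int_Y B_{m-2}(V)\,b\,d\bar y$.

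For the first equality, your ``cleaner alternative'' is exactly the paper's argument, but you stop short of the one concrete step that makes it work. The required test function is not a mysterious object involving a cutoff: it is
\[
\Psi(y)=\phi(y):=\frac{y_N^{m-2}}{(m-2)!}\,g(y)\quad\text{on }Y\times(-1,0),\qquad \phi\equiv 0\text{ on }Y\times(-\infty,-1).
\]
Since $g(y)=b(\bar y)(1+y_N)^{m+1}$ vanishes to order $m+1$ at $y_N=-1$, this $\phi$ lies in $W^{m,2}_{{\rm Per}_Y}(Y\times(-\infty,0))$; and $\partial_{y_N}^l\phi(\bar y,0)=0$ for $l\le m-3$ while $\partial_{y_N}^{m-2}\phi(\bar y,0)=b(\bar y)$, so $V-\phi$ is admissible in \eqref{eq: variational V poly} and $\int|D^mV|^2=\int D^mV:D^m\phi$. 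The identity $\int D^mV:D^m\phi=q_Y(V,g)$ then drops out of a single Leibniz expansion of $D^m\big(\tfrac{y_N^{m-2}}{(m-2)!}g\big)$: differentiating the monomial $y_N^{m-2}$ exactly $m-k$ times forces those indices to be $N$ and produces $\tfrac{y_N^{k-2}}{(k-2)!}$, so the Frobenius product becomes $\sum_{k=2}^m\binom{m}{k}\tfrac{y_N^{k-2}}{(k-2)!}D^k(\partial_{y_N}^{m-k}V):D^kg$, which is $q_Y(V,g)$ after relabelling $k=l+1$. No term-by-term integration by parts or telescoping is needed.

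Your ``a posteriori'' route via self-consistency of Theorem~\ref{thm: macroscopic limit poly} is circular: the identification of the strange term with the microscopic energy in Theorem~\ref{thm: final} \emph{uses} the present theorem, so it cannot be invoked to prove it.
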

\begin{proof}
Let $\phi$ be the real-valued function defined on $Y \times ]-\infty,0]$ by
$\phi(y) =\frac{y^{m-2}_N}{(m-2)!} g(y)$  if $-1\leq y_N \leq 0$ and $\phi(y) =0$ if $ y_N < -1$.
Then $\phi \in W^{m,2}(Y \times (-\infty,0))$, $\frac{\partial^l \phi}{\partial y^l_N}(\bar{y},0) = 0$ for all $0 \leq l \leq m-3$, and
\begin{equation}
\label{proof: prop phi = b}
\frac{\partial^{m-2} \phi}{\partial y^{m-2}_N }(\bar{y},0) = b(\bar{y}), \quad\quad \textup{for all $y \in Y$.}
\end{equation}
Now note that the function $\psi = V - \phi$ is a suitable test-function in equation \eqref{eq: variational V poly}; by plugging it in \eqref{eq: variational V poly} we deduce that
$
\int_{Y\times (-\infty,0)} |D^m V|^2\,\diff{y} = \int_{Y \times(-1,0)} D^m V : D^m\phi\, \diff{y}.
$
By the Leibnitz rule 
we have that
\begin{multline}
\label{proof: Dm V : Dm phi}
\int_{Y \times(-1,0)} D^m V : D^m\phi\, \diff{y} \\
= \int_{Y \times (-1,0)} \frac{\partial ^m V}{\partial x_{j_1}\cdots \partial x_{j_m}}  \sum_{S \in \p(m)}\frac{1}{(m-2)!} \frac{\partial^{|S|}    y_N^{m-2}    }{\prod_{j \in S} \partial x_{i_j}} \frac{\partial^{(n-|S|)}g}{\prod_{j \notin S} \partial x_{i_j}}
  \, dy.
\end{multline}
Using the obvious fact that
\[
 \frac{\partial^{m-k} y_N^{m-2} }{\partial x_{i_1} \cdots \partial x_{i_{m-k}}}=
\begin{cases}
0, &\textup{if $k=0,1$;}\\
y_N^{k-2} \delta_{i_1 N} \cdots \delta_{i_{m-k} N}, &\textup{for $k\geq 2$.}
\end{cases}
\]
we can rewrite the  right-hand side of \eqref{proof: Dm V : Dm phi} as follows
\[
\begin{split}
&\sum_{k=2}^m \binom{m}{k} \int_{Y \times (-1,0)} D^k \bigg(\frac{\partial^{m-k} V(y)}{\partial y_N^{m-k}}\bigg) : \bigg( \frac{y_N^{k-2}}{(k-2)!} D^k g(y) \bigg)\, dy\\
&= \sum_{k=2}^m \binom{m}{k} \int_{Y \times (-1,0)} \frac{y_N^{k-2}}{(k-2)!} D^k \bigg(\frac{\partial^{m-k} V(y)}{\partial y_N^{m-k}}\bigg) : D^k g(y)\, dy,
\end{split}
\]
which coincides with the left-hand side of \eqref{proof: strange term poly} up to the change of summation index defined by $k = l+1$.
Finally, \eqref{eq: D^m v boundary equality} follows by applying the polyharmonic Green formula \eqref{polyharmonic Green formula} on $\int_{Y\times (-1,0)} D^m V : D^m \phi\,\diff{y}$. Indeed, we note that the boundary integrals on $\partial Y \times (-1,0)$ are zero, due to the periodicity of $V$ and $b$. Moreover the boundary integral on $\partial Y \times \{-1\}$ is zero since $\phi$ vanishes there together with all its derivatives. Then, the only non-trivial boundary integral is supported on $Y \times \{0\}$. More precisely, we have
{\small \begin{equation}
\int_{Y\times (-1,0)} D^m V : D^m \phi\,\diff{y} = (-1)^m \int_{Y \times (-1,0)} \Delta^m V  \phi \, \diff{y} + \sum_{t=0}^{m-1} \int_{Y} B_t(V)(\bar{y},0) \frac{\partial^t\phi(\bar{y},0)}{\partial y_N^t} \,\diff{\bar{y}},
\end{equation}}
and by recalling that $\Delta^m V = 0$ in $Y \times (-1,0)$, $\frac{\partial^m V}{\partial y_N^m} = 0$ on $Y \times \{0\}$, $\frac{\partial^l \phi}{\partial y^l_N}= 0$ on $Y \times \{0\}$, for all $0 \leq l \leq m-3$ and by \eqref{proof: prop phi = b}, we deduce that
$$
\int_{Y\times (-1,0)} D^m V : D^m \phi\,\diff{y} = \int_{Y} B_{m-2}(V)(\bar{y},0) b(\bar{y})\,\diff{\bar{y}}
$$
and by formula \eqref{polyharmonic Green formula}
$
B_{m-2}(V)(\bar{y},0) = - \sum_{l=m-2}^{m-1} \binom{l}{m-2} \Delta_{N-1}^{l-m +2} (\frac{\partial^{m-1}}{\partial y_N^{m-1}}(\Delta^{m-l-1}V) ),
$
from which we deduce \eqref{eq: D^m v boundary equality}.
\end{proof}

\begin{theorem}
\label{thm: final}
Let $m \in \N$, $m \geq 2$. Let $V$ be as in Lemma \ref{lemma: V poly}. Let $v$, $\hat{v}$ be the functions defined in Theorem \ref{thm: macroscopic limit poly}. Let also $g(y) = b(\bar{y})(1 + y_N)^{m+1}$ for all $y\in Y \times (-1,0)$. Then
\[
\hat{v}(\bar{x},y) = - V(y) \frac{\partial^{m-1} v}{\partial x_N^{m-1}}(\bar{x},0) + {a(x)y^{m-1}},
\]
for some  $a(\bar{x}) \in L^2(W)$. Moreover, the strange term \eqref{strange term poly} is given by
{\small
\[
\begin{split}
&- \int_W q_Y(\hat{v}, g) \frac{\partial^{m-1}\varphi}{\partial x_N^{m-1}}(\bar{x}, 0) \diff{\bar{x}} = \int_{Y \times (-\infty,0)} |D^m V|^2 dy \int_W  \frac{\partial^{m-1}v}{\partial x_N^{m-1}}(\bar{x}, 0)\frac{\partial^{m-1}\varphi}{\partial x_N^{m-1}}(\bar{x}, 0) \diff{\bar{x}}\\
&= - \int_{Y} \Bigg(\frac{\partial^{m-1} (\Delta V)}{\partial x_N^{m-1}} + (m-1) \Delta_{N-1} \bigg( \frac{\partial^{m-1} V}{\partial x_N^{m-1}} \bigg)  \Bigg)  b(\bar{y}) \diff{\bar{y}} \int_W  \frac{\partial^{m-1}v}{\partial x_N^{m-1}}(\bar{x}, 0)\frac{\partial^{m-1}\varphi}{\partial x_N^{m-1}}(\bar{x}, 0) \diff{\bar{x}}.
\end{split}
\]}
\end{theorem}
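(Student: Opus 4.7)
The plan is to first use the uniqueness statement of Lemma~\ref{lemma: V poly} to identify $\hat{v}$ in terms of $V$, and then compute the strange term $q_Y(\hat{v}, g)$ via linearity, exploiting that monomials of the form $a(\bar{x}) y_N^{m-1}$ lie in the kernel of $q_Y(\cdot, g)$.

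For the identification, I would set
\[
W(\bar{x}, y) := \hat{v}(\bar{x}, y) + V(y)\, \frac{\partial^{m-1} v}{\partial x_N^{m-1}}(\bar{x}, 0),
\]
for a.e.~$\bar{x} \in W$, and show that $W(\bar{x}, \cdot)$ solves the homogeneous version (with $b \equiv 0$) of the microscopic problem of Lemma~\ref{lemma: V poly}. By linearity, combining \eqref{eq: variational v hat poly} for $\hat{v}$ (applied to test functions depending only on $y$) with \eqref{eq: variational V poly} for $V$ yields the variational identity for $W$. For the boundary conditions I would combine \eqref{eq: Casado1 poly}--\eqref{eq: Casado2 poly} with those of $V$, using in particular the identity $\frac{\partial^{m-1} V}{\partial y_j \partial y_N^{m-2}}(\bar{y},0) = \frac{\partial b}{\partial y_j}(\bar{y})$ obtained by tangential differentiation of the trace condition $\frac{\partial^{m-2} V}{\partial y_N^{m-2}}(\bar{y},0) = b(\bar{y})$. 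The uniqueness in Lemma~\ref{lemma: V poly} then forces $W(\bar{x}, y) = a(\bar{x}) y_N^{m-1}$ for some measurable $a : W \to \mathbb{R}$, yielding the first assertion.

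For the strange term, by linearity
\[
q_Y(\hat{v}, g) = -\frac{\partial^{m-1} v}{\partial x_N^{m-1}}(\bar{x},0)\, q_Y(V, g) + q_Y\bigl(a(\bar{x}) y_N^{m-1},\, g\bigr).
\]
The second term vanishes: in each summand indexed by $l$, the inner factor $\frac{\partial^{m-l-1}}{\partial y_N^{m-l-1}}(y_N^{m-1})$ is a scalar multiple of $y_N^l$, which is killed by the subsequent $D_y^{l+1}$. Integrating the resulting identity against $\frac{\partial^{m-1} \varphi}{\partial x_N^{m-1}}(\bar{x}, 0)$ on $W$, and then substituting the two equivalent expressions for $q_Y(V, g)$ given by \eqref{proof: strange term poly} and \eqref{eq: D^m v boundary equality} in Theorem~\ref{thm: strange term poly}, produces the two asserted equalities.

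I expect the main obstacle to be the boundary-condition verification in the identification step. The data in Theorem~\ref{thm: conditions on v poly} is given only on derivatives of order $m-1$, so in order to check the vanishing of the normal traces of $W$ up to order $m-2$ one must integrate the $(m-1)$-th order boundary identities along tangential directions on $Y$; the $Y$-periodicity of $b$ ensures that the integration constants depend only on $\bar{x}$, and these are precisely the contributions absorbed into the monomial $a(\bar{x}) y_N^{m-1}$ via the uniqueness statement.
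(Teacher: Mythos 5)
Your proposal is correct and follows essentially the same route as the paper's (very terse) proof: one observes that $-V(y)\,\frac{\partial^{m-1}v}{\partial x_N^{m-1}}(\bar x,0)$ satisfies problem \eqref{eq: variational v hat poly} together with the boundary conditions \eqref{eq: Casado1 poly}, invokes the uniqueness modulo $a\,y_N^{m-1}$ from Lemma \ref{lemma: V poly}, and then substitutes the two expressions for $q_Y(V,g)$ provided by Theorem \ref{thm: strange term poly}. The details you add — that $q_Y(\cdot,g)$ annihilates polynomials in $y_N$ of degree at most $m-1$, and that the lower-order traces of the difference must be recovered by tangential integration of \eqref{eq: Casado1 poly}--\eqref{eq: Casado2 poly} using $Y$-periodicity — are precisely the steps the paper leaves implicit.
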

\begin{proof}
The proof follows by  Lemma \ref{lemma: V poly}  and Theorems \ref{thm: conditions on v poly},  \ref{thm: strange term poly} and by observing that  $- V(y) \frac{\partial^{m-1} v}{\partial x_N^{m-1}}(\bar{x},0)$ satisfies problem \eqref{eq: variational v hat poly} with the boundary conditions \eqref{eq: Casado1 poly}.
\end{proof}

We are now ready to conclude the proof of (iii) of Theorem \ref{thm: poly strong}.

\begin{proof}[Proof of Theorem \ref{thm: poly strong}(iii)]
Define $g(y) = b(\bar{y})(1+y_N)^{m+1}$ for all $y = (\bar{y}, y_N)$ in $Y \times (-1,0)$. The function $v$ in Theorem \ref{thm: macroscopic limit poly} satisfies{\small
\begin{equation}
\label{eq: poly strong limit problem omega}
\int_W \, q_Y(V,g) \frac{\partial^{m-1} v}{\partial x_N^{m-1}}(\bar{x},0)\frac{\partial^{m-1}\varphi}{\partial x_N^{m-1}}(\bar{x}, 0)  \diff{\bar{x}}
+\int_{\Omega} D^m v : D^m\varphi + u\varphi \,\diff{x} = \int_{\Omega} f \varphi\, \diff{x}.
\end{equation}
}for all $\varphi \in W^{m,2}(\Omega) \cap W^{m-1,2}_0(\Omega)$. By Theorem \ref{thm: final} we can rewrite the first integral on the left-hand side of \eqref{eq: poly strong limit problem omega} as
\[
\int_{Y \times (-\infty,0)} |D^m V|^2 dy \int_W  \frac{\partial^{m-1}v}{\partial x_N^{m-1}}(\bar{x}, 0)\frac{\partial^{m-1}\varphi}{\partial x_N^{m-1}}(\bar{x}, 0) \,\diff{\bar{x}}
\]
and by the Green Formula  \eqref{polyharmonic Green strong BC} for all $\varphi \in W^{m,2}(\Omega) \cap W^{m-1,2}_0(\Omega)$
\begin{equation}
\label{proof: poly strong trih green}
\int_{\Omega} D^m v : D^m\varphi \, dx = (-1)^m \int_{\Omega} \Delta^m v \varphi + \int_{\partial \Omega} \frac{\partial^m v}{\partial n^m} \frac{\partial^{m-1} \varphi}{\partial n^{m-1}}\, dS.
\end{equation}
Hence, in the weak formulation of the limiting problem  we find the following boundary integral
\begin{equation}
\label{proof: poly strong last B I}
\int_W \Bigg(\frac{\partial^m v}{\partial x_N^m}(\bar{x},0) + \Bigg(\int_{Y \times (-\infty,0)}|D^mV|^2 \, dy\Bigg) \frac{\partial^{m-1} v}{\partial x^{m-1}_N}(\bar{x},0)\Bigg)\frac{\partial^{m-1}\varphi}{\partial x_N^{m-1}}(\bar{x}, 0)\, \diff{\bar{x}},
\end{equation}
for all $\varphi \in W^{m,2}(\Omega) \cap W^{m-1,2}_0(\Omega)$. By \eqref{eq: poly strong limit problem omega}, \eqref{proof: poly strong trih green}, \eqref{proof: poly strong last B I} and the arbitrariness of $\varphi$ we deduce the statement of Theorem \ref{thm: poly strong}, part (iii).
\end{proof}

\section{Appendix}

In this section we prove the following technical result used in the proof of Proposition \ref{prop: macro limit}.

\begin{lemma}
\label{lemma: auxiliary}
Let $l,m\in \N$, $m\geq 2$, $1\le l\le m-1$,  $i_1, \dots , i_{m-l-1}\in \{1,\dots ,N \}$. Then  for all $\varphi \in  W^{m,2}(\Omega) \cap W^{m-1,2}_0(\Omega) $ we have
\[
\frac{1}{\eps^{l-1}}\frac{\partial^{m-l}\varphi}{\partial x_N^{m-l}} (\hat{\Phi}_\epsilon(y)) \to \frac{y_N^{l-1}}{(l-1)!} \frac{\partial^{m-1}\varphi}{\partial x_N^{m-1}}(\bar{x},0),
\]
in $L^2(W \times Y \times (-1,0)$ as $\eps \to 0$ and if at least one of the indexes $i_1, \dots , i_{m-l-1}$ does not coincide with $N$ we also have
\[
\frac{1}{\eps^{l-1}}\frac{\partial^{m-l}\varphi}{\partial x_N\partial x_{i_1} \cdots \partial x_{i_{m-l-1}} }(\hat{\Phi}_\epsilon(y)) \to 0
\]
in $L^2(W \times Y \times (-1,0)$ as $\eps \to 0$.
\end{lemma}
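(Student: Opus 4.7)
The overall plan is to exploit the vanishing of all traces $D^\alpha\varphi(\bar x,0)$ with $|\alpha|\le m-2$, which holds because $\varphi\in W^{m-1,2}_0(\Omega)$ and tangential differentiation commutes with the trace operator on the flat face $W\times\{0\}$, and then to apply Taylor's formula with integral remainder in the variable $x_N$ so as to make explicit the factor $x_N^{l-1}$ (resp.\ $x_N^l$) that produces the prescribed scaling in $\eps$. A standard density argument reduces the computation to smooth $\varphi$ vanishing to order $m-1$ on $W\times\{0\}$, and the passage to the limit is then accomplished by combining Lemma~\ref{lemma: h_eps} (to identify $\hat\Phi_\eps^{(N)}(y)/\eps\to y_N$) with Lemma~\ref{lemma: unfolding convergence poly}(ii) and dominated convergence in the auxiliary parameter $\tau$.

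For the first statement, the function $\psi(x_N):=\partial_{x_N}^{m-l}\varphi(\bar x,x_N)$ has $\psi^{(k)}(0)=0$ for $k=0,\dots,l-2$ (each such derivative being of order $\le m-2$), so for $l\ge 2$ Taylor's formula in the integral form yields
\[
\partial_{x_N}^{m-l}\varphi(\bar x,x_N)=\frac{x_N^{l-1}}{(l-2)!}\int_0^1(1-\tau)^{l-2}\,\partial_{x_N}^{m-1}\varphi(\bar x,\tau x_N)\,d\tau,
\]
while for $l=1$ the claim reduces directly to Lemma~\ref{lemma: unfolding convergence poly}(ii) applied to $\partial_{x_N}^{m-1}\varphi\in W^{1,2}(\Omega)$. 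Substituting $x=\hat\Phi_\eps(y)$ and using Lemma~\ref{lemma: h_eps}, one has $\hat\Phi_\eps^{(N)}(y)/\eps = y_N + O(\eps^{1/2})$ uniformly on $Y\times(-1,0)$, so $(\hat\Phi_\eps^{(N)}(y)/\eps)^{l-1}\to y_N^{l-1}$ uniformly. Passing to the limit in the integrand via Lemma~\ref{lemma: unfolding convergence poly}(ii) and $\int_0^1(1-\tau)^{l-2}\,d\tau=1/(l-1)$ yields the announced limit $\frac{y_N^{l-1}}{(l-1)!}\partial_{x_N}^{m-1}\varphi(\bar x,0)$ in $L^2(W\times Y\times(-1,0))$.

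For the second statement, write the multi-index of the derivative as $\beta=e_N+e_{i_1}+\cdots+e_{i_{m-l-1}}$ and select $j\in\{1,\dots,N-1\}$ occurring in $\beta$, so that $\beta=e_j+\gamma$ with $|\gamma|=m-l-1$. Taylor expansion in $x_N$ applied to $D^\gamma\varphi$, whose normal derivatives $\partial_{x_N}^k D^\gamma\varphi$ for $0\le k\le l-1$ have total order at most $m-2$ and thus vanish at $x_N=0$, gives
\[
D^\gamma\varphi(\bar x,x_N)=\frac{x_N^{l}}{(l-1)!}\int_0^1(1-\tau)^{l-1}\,\partial_{x_N}^{l}D^\gamma\varphi(\bar x,\tau x_N)\,d\tau,
\]
and differentiating in $x_j$ produces the $m$-th order derivative $\partial_{x_j}\partial_{x_N}^l D^\gamma\varphi\in L^2(\Omega)$ inside the integral. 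Since the prefactor is $x_N^l$ rather than $x_N^{l-1}$, evaluating at $\hat\Phi_\eps(y)$ and dividing by $\eps^{l-1}$ leaves an extra factor $\eps$, so the quantity converges to $0$ in $L^2(W\times Y\times(-1,0))$ (the $L^2$-norm of the remaining integral is controlled by $\|\varphi\|_{W^{m,2}(\Omega)}$ uniformly in $\eps$).

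The main technical obstacle is the $L^2$-convergence of the composition $\partial_{x_N}^{m-1}\varphi(\hat\Phi_\eps(y))$ to $\partial_{x_N}^{m-1}\varphi(\bar x,0)$ under the sole assumption $\partial_{x_N}^{m-1}\varphi\in W^{1,2}(\Omega)$, together with its version with the slack parameter $\tau x_N$ instead of $x_N$. This is resolved by approximating $\varphi$ in $W^{m,2}(\Omega)\cap W^{m-1,2}_0(\Omega)$ by smooth functions vanishing to order $m-1$ on $W\times\{0\}$, applying Lemma~\ref{lemma: unfolding convergence poly}(ii) to each approximant, and using dominated convergence in $\tau\in(0,1)$ together with the uniform bound $|\hat\Phi_\eps^{(N)}(y)|\le C\eps$ to transfer the convergence to the original $\varphi$.
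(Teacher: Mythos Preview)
Your approach is sound and follows the same general strategy as the paper---Taylor expansion in $x_N$ combined with density in $W^{m,2}(\Omega)\cap W^{m-1,2}_0(\Omega)$---but with two variations worth noting. First, you use the integral form of the remainder where the paper uses the Lagrange form; your version integrates more cleanly with dominated convergence in the parameter $\tau$, while the paper's version leads to a three-term splitting (fundamental theorem of calculus in $x_N$, Besov trace estimate in $\bar x$, and binomial expansion of $(\eps y_N - h_\eps)^{l-1}$). Second, and more interestingly, for the second statement you gain an extra power of $x_N$ by Taylor-expanding $D^\gamma\varphi$ (with $|\gamma|=m-l-1$) to order $l$, exploiting that one more normal derivative still has order $\le m-2$ and hence vanishing trace; the paper instead reaches the same conclusion by iterating Poincar\'e's inequality $l-1$ times in the $x_N$ direction. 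Your route is more direct.

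One claim needs adjustment, however. In the second statement you assert that ``the $L^2$-norm of the remaining integral is controlled by $\|\varphi\|_{W^{m,2}(\Omega)}$ uniformly in $\eps$''. This is false for the integral taken by itself: with $g:=\partial_{x_j}\partial_{x_N}^l D^\gamma\varphi\in L^2(\Omega)$ merely square-integrable, the quantity $\int_0^1(1-\tau)^{l-1}g(\bar z,\tau s_\eps)\,d\tau$ need not be bounded in $L^2(W\times Y\times(-1,0))$ uniformly in $\eps$ (think of $g(\bar z,x_N)=|x_N|^{-1/2+\delta}$). What \emph{is} true is that the prefactor and the integral must be estimated together: changing variable $v=-\tau s_\eps$ gives $|s_\eps|^l\bigl|\int_0^1(1-\tau)^{l-1}g(\bar z,\tau s_\eps)\,d\tau\bigr|\le |s_\eps|^{l-1}\int_0^{|s_\eps|}|g(\bar z,-v)|\,dv$, and then Cauchy--Schwarz yields the bound $\eps^{-(l-1)}|D^\beta\varphi(\hat\Phi_\eps)|\le C(|s_\eps|/\eps)^{l-1}|s_\eps|^{1/2}\bigl(\int_{-|s_\eps|}^0|g(\bar z,t)|^2\,dt\bigr)^{1/2}$, whose $L^2$-norm over $W\times Y\times(-1,0)$ is controlled by $C\eps^{1/2}\|g\|_{L^2(W\times(-C\eps,0))}\to 0$. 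With this correction your argument is complete.
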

\begin{proof}
Note that for $l=1$ the claim follows by Lemma \ref{lemma: unfolding convergence poly}. Then assume $l > 1$. Fix $\varphi \in W^{m,2}(\Omega) \cap W^{m-1,2}_0(\Omega) \cap C^\infty(\Omega)$. Then
{\small
\begin{equation}
\label{proof: Taylor 1}
\begin{split}
&\int_{\widehat{W}_\eps \times Y \times (-1,0)} \Bigg \lvert \frac{1}{\eps^{l-1}}\frac{\partial^{m-l}\varphi}{\partial x_N^{m-l}} (\hat{\Phi}_\epsilon(y)) - \frac{y_N^{l-1}}{(l-1)!} \frac{\partial^{m-1}\varphi}{\partial x_N^{m-1}}(\bar{x},0) \Bigg \rvert^2 d\bar{x} dy \\
&= \int_{-1}^0 \sum_{k \in I_{W,\eps}} \int_{C^k_\eps} \int_Y  \Bigg \lvert \frac{1}{\eps^{l-1}}\frac{\partial^{m-l}\varphi}{\partial x_N^{m-l}} \Big(\eps \big[\frac{\bar{x}}{\eps}\big] + \eps \bar{y}, \eps y_N - h_\eps \Big(\eps \big[\frac{\bar{x}}{\eps}\big] + \eps \bar{y}, \eps y_N\Big)\Big)\\
&\qquad \qquad \qquad \qquad \qquad \qquad \qquad \qquad\qquad  - \frac{y_N^{l-1}}{(l-1)!} \frac{\partial^{m-1}\varphi}{\partial x_N^{m-1}}(\bar{x},0) \Bigg \rvert^2 d\bar{y} d\bar{x} dy_N\\
&= \int_{-1}^0 \sum_{k \in I_{W,\eps}} \int_{C^k_\eps} \int_{C^k_\eps}  \Bigg \lvert \frac{1}{\eps^{l-1}}\frac{\partial^{m-l}\varphi}{\partial x_N^{m-l}} \Big(\bar{z}, \eps y_N - h_\eps \Big(\bar{z}, \eps y_N\Big)\Big)\\
& \qquad \qquad\qquad\qquad\qquad\qquad\qquad\qquad  - \frac{y_N^{l-1}}{(l-1)!} \frac{\partial^{m-1}\varphi}{\partial x_N^{m-1}}(\bar{x},0) \Bigg \rvert^2 d\bar{x} \frac{d\bar{z}}{\eps^{N-1}} dy_N.\\
\end{split}
\end{equation}
}Now, let $\bar{z} \in C^{k}_\eps$ be fixed. By expanding $\varphi$ in Taylor's series with remainder in Lagrange form we deduce that
\[
\begin{split}
\frac{\partial^{m-l}\varphi}{\partial x_N^{m-l}} \big(\bar{z}, \eps y_N - h_\eps (\bar{z}, \eps y_N)\big) = \frac{\partial^{m-1}\varphi}{\partial x_N^{m-1}} (\bar{z}, \xi ) \frac{(\eps y_N - h_\eps(\bar{z}, \eps y_N))^{l-1}}{(l-1)!},
\end{split}
\]
for some $\xi \in (0,  \eps y_N - h_\eps (\bar{z}, \eps y_N))$. We then deduce that the term appearing in the right-hand side of \eqref{proof: Taylor 1} can be rewritten as
{\small
\begin{equation}
\label{proof: convergence weighted varphi}
\int_{-1}^0 \sum_{k \in I_{W,\eps}} \int_{C^k_\eps} \int_{C^k_\eps}  \Bigg \lvert \frac{1}{\eps^{l-1}}\frac{\partial^{m-1}\varphi}{\partial x_N^{m-1}} (\bar{z},\xi) \frac{(\eps y_N - h_\eps(\bar{z}, \eps y_N))^{l-1}}{(l-1)!} \frac{y_N^{l-1}}{(l-1)!} \frac{\partial^{m-1}\varphi}{\partial x_N^{m-1}}(\bar{x},0) \Bigg \rvert^2 d\bar{x} \frac{d\bar{z}}{\eps^{N-1}} dy_N.
\end{equation}
}
We then estimate \eqref{proof: convergence weighted varphi} from above. Note that
{\small
\begin{equation}
\label{proof: convergence weighted varphi 2}
\begin{split}
&\int_{-1}^0 \sum_{k \in I_{W,\eps}} \int_{C^k_\eps} \int_{C^k_\eps}  \Bigg \lvert \frac{1}{\eps^{l-1}}\frac{\partial^{m-1}\varphi}{\partial x_N^{m-1}} (\bar{z},\xi) \frac{(\eps y_N - h_\eps(\bar{z}, \eps y_N))^{l-1}}{(l-1)!} \frac{y_N^{l-1}}{(l-1)!} \frac{\partial^{m-1}\varphi}{\partial x_N^{m-1}}(\bar{x},0) \Bigg \rvert^2 d\bar{x} \frac{d\bar{z}}{\eps^{N-1}} dy_N \\
&\leq \int_{-1}^0 \sum_{k \in I_{W,\eps}} \int_{C^k_\eps} \int_{C^k_\eps} \Bigg \lvert \Bigg(\frac{\partial^{m-1}\varphi}{\partial x_N^{m-1}} (\bar{z},\xi) -  \frac{\partial^{m-1}\varphi}{\partial x_N^{m-1}}(\bar{x},0)  \Bigg) \frac{y_N^{l-1}}{(l-1)!}\\
&\quad + \sum_{s=1}^{l-1} \binom{l-1}{s}  \frac{1}{\eps^{l-1}} \frac{\partial^{m-1} \varphi}{\partial x_N^{m-1}}(\bar{z}, \xi) (\eps y_N)^{l-1-s} (-h_\eps(\bar{z}, \eps y_N))^{s} \Bigg \rvert^2 d\bar{x} \frac{d\bar{z}}{\eps^{N-1}} dy_N \\
\end{split}
\end{equation}
}
and the right-hand side of \eqref{proof: convergence weighted varphi 2} is estimated from above by
{\small
\begin{equation}
\label{proof: convergence weighted varphi 3}
\begin{split}
&\leq C \int_{-1}^0 \sum_{k \in I_{W,\eps}} \int_{C^k_\eps} \int_{C^k_\eps} \Bigg \lvert \frac{\partial^{m-1}\varphi}{\partial x_N^{m-1}} (\bar{z},\xi) -  \frac{\partial^{m-1}\varphi}{\partial x_N^{m-1}}(\bar{z},0)  \Bigg \rvert^2 d\bar{x} \frac{d\bar{z}}{\eps^{N-1}} dy_N\\
&+ C \int_{-1}^0 \sum_{k \in I_{W,\eps}} \int_{C^k_\eps} \int_{C^k_\eps} \Bigg \lvert \frac{\partial^{m-1}\varphi}{\partial x_N^{m-1}} (\bar{z},0) -  \frac{\partial^{m-1}\varphi}{\partial x_N^{m-1}}(\bar{x},0)  \Bigg \rvert^2 d\bar{x} \frac{d\bar{z}}{\eps^{N-1}} dy_N\\
&+C \sum_{s=1}^{l-1} \int_{-1}^0 \sum_{k \in I_{W,\eps}} \int_{C^k_\eps} \int_{C^k_\eps} \Bigg\lvert \frac{\partial^{m-1} \varphi}{\partial x_N^{m-1}}(\bar{z}, \xi)\Bigg\rvert^2 \Bigg\lvert \frac{1}{\eps^{l-1}} (\eps y_N)^{l-1-s} |h_\eps(\bar{z}, \eps y_N)|^{s}   \Bigg\rvert^2   d\bar{x} \frac{d\bar{z}}{\eps^{N-1}} dy_N. \\
\end{split}
\end{equation}
}Now we consider separately the three integrals on the right-hand side of \eqref{proof: convergence weighted varphi 3}. The first integral can be estimated in the following way 
\begin{equation}
\label{FundThmCal}
\begin{split}
&\int_{-1}^0 \sum_{k \in I_{W,\eps}} \int_{C^k_\eps} \int_{C^k_\eps} \Bigg \lvert \frac{\partial^{m-1}\varphi}{\partial x_N^{m-1}} (\bar{z},\xi) -  \frac{\partial^{m-1}\varphi}{\partial x_N^{m-1}}(\bar{z},0)  \Bigg \rvert^2 d\bar{x} \frac{d\bar{z}}{\eps^{N-1}} dy_N\\
&= \int_{-1}^0 \sum_{k \in I_{W,\eps}} \int_{C^k_\eps} \int_{C^k_\eps} \Bigg \lvert \int_0^\xi \frac{\partial^m\varphi}{\partial x_N^m}(\bar{z},t) dt  \Bigg \rvert^2 d\bar{x} \frac{d\bar{z}}{\eps^{N-1}} dy_N \leq C \eps \norma*{\frac{\partial^m\varphi}{\partial x_N^m}}^2_{L^2(W \times (-c\eps,0))},
\end{split}
\end{equation}
Now consider the second integral in \eqref{proof: convergence weighted varphi 3}. We have the following estimate
\begin{equation}
\label{proof: conv varphi II int}
\begin{split}
& \int_{-1}^0 \sum_{k \in I_{W,\eps}} \int_{C^k_\eps} \int_{C^k_\eps} \Bigg \lvert \frac{\partial^{m-1}\varphi}{\partial x_N^{m-1}} (\bar{z},0) -  \frac{\partial^{m-1}\varphi}{\partial x_N^{m-1}}(\bar{x},0)  \Bigg \rvert^2 d\bar{x} \frac{d\bar{z}}{\eps^{N-1}} dy_N\\
&=   \sum_{k \in I_{W,\eps}} \int_{C^k_\eps} \int_{C^k_\eps} \Bigg \lvert \frac{\partial^{m-1}\varphi}{\partial x_N^{m-1}} (\bar{z},0) -  \frac{\partial^{m-1}\varphi}{\partial x_N^{m-1}}(\bar{x},0)  \Bigg \rvert^2 \frac{|\bar{z} - \bar{x}|^{N}}{|\bar{z} - \bar{x}|^{N}} d\bar{x} \frac{d\bar{z}}{\eps^{N-1}} \\
&\leq C  \sum_{k \in I_{W,\eps}} \int_{C^k_\eps} \int_{C^k_\eps}\Bigg \lvert \frac{\frac{\partial^{m-1}\varphi}{\partial x_N^{m-1}} (\bar{z},0) -  \frac{\partial^{m-1}\varphi}{\partial x_N^{m-1}}(\bar{x},0)}{|\bar{z}-\bar{x}|^{N/2}}\Bigg \rvert^2 \eps^N d\bar{x} \frac{d\bar{z}}{\eps^{N-1}}\\
&\leq C \eps \norma*{\frac{\partial^{m-1}\varphi}{\partial x^{m-1}_N}(\bar{x},0)}^2_{B_2^{1/2}(W)} \leq C \eps \norma*{\frac{\partial^{m-1}\varphi}{\partial x^{m-1}_N}(\bar{x},0)}^2_{W^{2,2}(\Omega)} \, ,
\end{split}
\end{equation}
where we have used the classical Trace Theorem and the standard Besov space $B_2^{1/2}(W)$  of exponents  $2, 1/2$. Finally we consider the third integral in \eqref{proof: convergence weighted varphi 3}, which  is easily estimated by using Lemma~\ref{lemma: h_eps} as follows:
{\footnotesize
\begin{equation}
\label{proof: conv varphi III int}
\begin{split}
& \sum_{s=1}^{l-1} \int_{-1}^0 \sum_{k \in I_{W,\eps}} \int_{C^k_\eps} \int_{C^k_\eps} \Bigg\lvert \frac{\partial^{m-1} \varphi}{\partial x_N^{m-1}}(\bar{z}, \xi)\Bigg\rvert^2   \Bigg\lvert \frac{1}{\eps^{l-1}} (\eps y_N)^{l-1-s} |h_\eps(\bar{z}, \eps y_N)|^{s}   \Bigg\rvert^2   d\bar{x} \frac{d\bar{z}}{\eps^{N-1}} dy_N \\
&\leq C \eps^{N-1} \sum_{s=1}^{l-1} \int_{-1}^0 \sum_{k \in I_{W,\eps}} \int_{C^k_\eps} \Bigg\lvert \frac{\partial^{m-1} \varphi}{\partial x_N^{m-1}}(\bar{z}, \xi)\Bigg\rvert^2  \Bigg(\frac{1}{\eps^{l-1}} (\eps)^{l-1-s} |C \eps^{3/2}|^{s}\Bigg)^2 \frac{d\bar{z}}{\eps^{N-1}} dy_N\\
& \leq C \sum_{s=1}^{l-1} \int_{-1}^0 \sum_{k \in I_{W,\eps}} \int_{C^k_\eps} \Bigg\lvert \frac{\partial^{m-1} \varphi}{\partial x_N^{m-1}}(\bar{z}, \xi)\Bigg\rvert^2  \eps^{s}  d\bar{z} dy_N \leq C \eps \norma*{\frac{\partial^{m-1} \varphi}{\partial x_N^{m-1}}}^2_{W^{1,2}(\Omega)}.
\end{split}
\end{equation}}
By using \eqref{FundThmCal}, \eqref{proof: conv varphi II int}, \eqref{proof: conv varphi III int} in \eqref{proof: convergence weighted varphi} we deduce that
{\small
\begin{multline}
\int_{-1}^0 \sum_{k \in I_{W,\eps}} \int_{C^k_\eps} \int_{C^k_\eps}  \Bigg \lvert \frac{1}{\eps^{l-1}}\frac{\partial^{m-1}\varphi}{\partial x_N^{m-1}} (\bar{z},\xi) \frac{(\eps y_N - h_\eps(\bar{z}, \eps y_N))^{l-1}}{(l-1)!}\\
- \frac{y_N^{l-1}}{(l-1)!} \frac{\partial^{m-1}\varphi}{\partial x_N^{m-1}}(\bar{x},0) \Bigg \rvert^2 d\bar{x} \frac{d\bar{z}}{\eps^{N-1}} dy_N \leq C \eps \norma{\varphi}_{W^{m,2}(\Omega)} \to 0,
\end{multline}
}as $\eps \to 0$. This concludes the proof in the case of smooth functions.

Now, if $\varphi \in W^{m,2}(\Omega) \cap W^{m-1,2}_0(\Omega)$, by \cite[Theorem 9, p.77]{Bur_book} there exists a sequence $(\varphi_n)_{n \in \N} \subset  W^{m,2}(\Omega) \cap W^{m-1,2}_0(\Omega) \cap C^\infty(\Omega)$ such that
\[
\varphi_n \to \varphi, \quad\quad \textup{in $W^{m,2}(\Omega_\eps)$},
\]
as $n \to \infty$ hence  ${\rm Tr}_{\partial \Omega} D^\eta \varphi_n = {\rm Tr}_{\partial \Omega} D^\eta \varphi$ for all $|\eta| \leq m-1$. Then
\begin{equation}
\label{proof: approx arg main}
\begin{split}
&\norma*{\frac{1}{\eps^{l-1}} \frac{\partial^{m-l} \varphi}{\partial x_N^{m-l}}(\widehat{\Phi}_\eps(y)) - \frac{y_N^{l-1}}{(l-1)!} \frac{\partial^{m-1}\varphi}{\partial x_N^{m-1}}(\bar{x},0)}_{L^2(\widehat{W}_\eps \times Y \times (-1,0))} \\
&\leq \norma*{\frac{1}{\eps^{l-1}} \frac{\partial^{m-l} \varphi}{\partial x_N^{m-l}}(\widehat{\Phi}_\eps(y)) - \frac{1}{\eps^{l-1}} \frac{\partial^{m-l} \varphi_n}{\partial x_N^{m-l}}(\widehat{\Phi}_\eps(y))}_{L^2(\widehat{W}_\eps \times Y \times (-1,0))} \\
&+ \norma*{\frac{1}{\eps^{l-1}} \frac{\partial^{m-l} \varphi_n}{\partial x_N^{m-l}}(\widehat{\Phi}_\eps(y)) - \frac{y_N^{l-1}}{(l-1)!} \frac{\partial^{m-1}\varphi_n}{\partial x_N^{m-1}}(\bar{x},0)}_{L^2(\widehat{W}_\eps \times Y \times (-1,0))}\\
&+ \norma*{\frac{y_N^{l-1}}{(l-1)!} \frac{\partial^{m-1}\varphi_n}{\partial x_N^{m-1}}(\bar{x},0) - \frac{y_N^{l-1}}{(l-1)!} \frac{\partial^{m-1}\varphi}{\partial x_N^{m-1}}(\bar{x},0)    }_{L^2(\widehat{W}_\eps \times Y \times (-1,0))}\, .
\end{split}
\end{equation}
By using Lemma \ref{lemma: exact int formula}, a Trace Theorem, Poincar\'{e} inequality and a typical diagonal argument, it is not difficult to see that right hand-side of \eqref{proof: approx arg main} tends to zero as $\eps \to 0$, concluding the proof of the first part of the statement.

The second part of the second statement can be proved as follows. By assumption, at least one of the indexes $i_j$ it is different from $N$. This implies that the function $\frac{\partial^{m-l} \varphi}{\partial x_N \partial x_{i_1} \cdots \partial x_{i_{m-l-1}}}$ is not only in $W^{l,2}(\Omega) \cap W_0^{l-1,2}(\Omega)$ but also in $W^{l,2}_{0,W}(\Omega )$. Thus, formula \eqref{eq: exact int formula} and an iterated application of the Poincar\'{e} inequality in the $x_N$ direction, $l-1$ times, yield
\begin{multline*}
\Bigg\lVert \frac{1}{\eps^{l-1}}\frac{\partial^{m-l}\varphi}{\partial x_N\partial x_{i_1} \cdots \partial x_{i_{m-l-1}} }(\hat{\Phi}_\epsilon(y)) \Bigg \rVert_{L^2(W \times Y \times (-1,0)} \leq C \Bigg\lVert \frac{\partial^{m-1}\varphi}{\partial x^{l}_N \partial x_{i_1} \cdots \partial x_{i_{m-l-1}} }(\hat{\Phi}_\epsilon(y)) \Bigg \rVert_{L^2(W \times Y \times (-1,0)}\,
\end{multline*}
which allows to conclude since the right-hand side of the previous inequality tends to zero as $\eps \to 0$ in virtue of Lemma \ref{lemma: unfolding convergence poly}(ii) and of the vanishing of the trace of $\frac{\partial^{m-1} \varphi}{\partial x_N^l \partial x_{i_1} \cdots \partial x_{i_{m-l-1}}}$ on $W$.

\end{proof}


\subsection*{Acknowledgment}
The authors are deeply indebted to Prof. J.M. Arrieta for valuable suggestions and discussions. The first author gratefully acknowledges the support of the \emph{Swiss National Science Foundation}, SNF, through the Grant No.169104. The second author acknowledges financial support from  the INDAM - GNAMPA project 2017 ``Equazioni alle derivate parziali non lineari e disuguaglianze funzionali: aspetti geometrici ed analitici" and the INDAM - GNAMPA project 2019 ``Analisi  spettrale  per  operatori  ellittici  con  condizioni  di  Steklov  o  parzialmente incernierate''. The  authors are also members of the Gruppo Nazionale per l'Analisi Matematica, la Probabilit\`{a} e le loro Applicazioni (GNAMPA) of the Istituto Nazionale di Alta Matematica (INdAM).

\end{document}